\documentclass[12pt]{amsart}
\usepackage{amsmath}
\usepackage{amssymb}
\usepackage{tabularx}
\usepackage{enumerate}
\usepackage{graphicx}
\usepackage{texdraw}
\usepackage{exscale}
\usepackage{relsize}
\topmargin=-0.7in \hoffset=-1.8cm \voffset=2.0cm \textheight=210mm
\textwidth=160mm

\usepackage{mathrsfs}
\usepackage{latexsym}
\usepackage{amsfonts,amssymb,amsmath}
\usepackage{epsfig}
\usepackage{color}
%\usepackage[notref]{showkeys}

%%%%%%%%%%%%%%%%%
%%%%%%%%%%%%%%%%
\makeatletter
\@addtoreset{equation}{section}

\makeatother
%%%%%%%
\newtheorem{thm}{Theorem}[section]
\newtheorem{lem}[thm]{Lemma}
\newtheorem{cor}[thm]{Corollary}
\newtheorem{prop}[thm]{Proposition}
\newtheorem{remark}[thm]{Remark}

\newcommand{\R}{\mathbb{R}}

%%%%%%%%%
\begin{document}
\title[A reaction-diffusion model with Allee effect and protection zone]
{The role of protection zone on species spreading governed by a reaction-diffusion model with strong Allee effect$^\S$}

 \thanks{$\S$  K. Du was partially supported by NSF of China (No. 11801084), 
 R. Peng was partially supported by NSF of China (Nos. 11671175 and 11571200), the Priority Academic Program Development of Jiangsu Higher Education Institutions, Top-notch Academic Programs Project of Jiangsu Higher Education Institutions (No. PPZY2015A013) and Qing Lan Project of Jiangsu Province, and N. Sun was partially supported by NSF of China (No. 11801330) and NSF of Shandong Province (No. ZR2017BA023) of China.}
\author[K. Du, R. Peng,  N. Sun]{Kai Du$^\dag$, Rui Peng$^\ddag$ and Ningkui Sun$^\sharp$}
\thanks{$\dag$  Shanghai Center for Mathematical Sciences, Fudan University, 2005 Songhu Road, Shanghai 200438, China.}
\thanks{$\ddag$ School of Mathematics and Statistics, Jiangsu Normal University, Jiangsu 221116, China.}
\thanks{$\sharp$ School of Mathematical Science, Shandong Normal University, Jinan 250014, China.}
\thanks{E-mails: {\sf kdu@fudan.edu.cn} (Du), {\sf pengrui$\_$seu@163.com} (Peng), {\sf sunnk@sdnu.edu.cn} (Sun)}
%\thanks{$*$ Corresponding author.}
\date{}

\begin{abstract}
It is known that a species dies out in the long run for small initial data if its evolution obeys a reaction of bistable nonlinearity. Such a phenomenon, which is termed as the strong Allee effect,
is well supported by numerous evidence from ecosystems, mainly due to the environmental pollution
as well as unregulated harvesting and hunting. To save an endangered species, in this paper we introduce a protection zone that is governed by a Fisher-KPP nonlinearity, and examine the dynamics of a reaction-diffusion model
with strong Allee effect and protection zone. We show the existence of two critical values $0<L_*\leq L^*$, and prove that a vanishing-transition-spreading trichotomy result holds when the length of protection zone is smaller than $L_*$; a transition-spreading dichotomy result holds when the length of protection zone is between $L_*$ and $L^*$; only spreading happens when the length of protection zone is larger than $L^*$. This suggests that the protection zone works when its length is larger than the critical value $L_*$. Furthermore, we compare two types of protection zone with the same length:
a connected one and a separate one, and our results reveal that the former is better for species spreading than the latter.

\end{abstract}

\subjclass[2010]{35K15, 35K55, 35B40, 92D15}
\keywords{Reaction-diffusion equation; strong Allee effect; protection zone; species spreading; long time behavior.}
\maketitle

\section{Introduction}\label{sec:pro}
\subsection{Derivation of our model}
It has long been accepted that the dynamics of a biological population is affected by both the environmental factors and the population density \cite{CC}. In the pioneering work \cite{A} in 1931, through experimental studies, W.C. Allee demonstrated that goldfish grow more rapidly when there are more individuals within the tank, which led him to conclude that aggregation can improve the survival rate of individuals. This phenomenon is termed as Allee effect.

Nowadays, Allee effect is broadly defined as a decline in individual fitness at low population size or density, and it can be classified into the strong one and the weak one. A population exhibiting a weak Allee effect will possess a reduced per capita growth rate (directly related to individual fitness of the population) at lower population density or size. However, even at this low population size or density, the population will always exhibit a positive per capita growth rate. Meanwhile, a population exhibiting a strong Allee effect will have a critical population size or density under which the population growth rate becomes negative. Therefore, when the population density or size hits a number below this threshold, the population will be destined for extinction; see \cite{CBG}.

There are a variety of mechanisms that can cause Allee effect, including mating systems, predation, environmental modification, and social interactions among others. For instance, there is an alarming threat to wild life and biodiversity due to the pollution of the environment as well as unregulated harvesting and hunting; see \cite{Av,BD,DDNY,JK} for related discussion.

If one uses a reaction-diffusion model to describe the spatiotemporal evolution of a single species that
is subject to the strong Allee effect, a typical reaction function is the so-called bistable nonlinearity.
Indeed, the role of Allee effect in population spreading and invasion in reaction-diffusion or integro-differential models has been explored in many research works; see \cite{DuS,KLH,La,LKa,ML,SS,SLMNS,WSW,WK,WKN,WSW2} to mention a few.

Now, let us assume that the species lives in an entire line habitat $\R=:(-\infty,\infty)$ and distributes only within a bounded interval initially. Mathematically, this leads us to consider the following Cauchy problem
 \begin{equation}\label{bip}
\left\{
\begin{array}{ll}
 u_t = u_{xx} + g(u), &  t>0, x\in\R,\\
 u(0,x)=u_0(x), & x\in \R,
\end{array}
\right.
\end{equation}
where the unknown function $u(t,x)$ is the density of the species at the location $x$ and time $t$, and the initial datum
$u_0$ is nonnegative and compactly supported.

The nonlinear reaction term $g$ is {\it globally Lipschitz} and satisfies
\begin{equation}\label{bi}
g(0)=g(\theta)= g(1)=0, \quad g(u) \left\{
\begin{array}{l}
<0 \ \ \mbox{in } (0,\theta),\\
>0\ \  \mbox{in } (\theta, 1),\\
< 0\ \ \mbox{in } (1,\infty),
\end{array} \right.
\end{equation}
for some $\theta\in (0,1)$,  $g'(0)<0$, $g'(1)<0$ and
\begin{equation}\label{unbalance}
\int_0^1 g(s) ds >0.
\end{equation}
We call that $g$ is a {\it bistable} nonlinearity. Denote $\theta^*\in(\theta,1)$
to be the constant which is uniquely determined by the condition
\[
\int_0^{\theta^*}g(s)ds=0.
\]

In the sequel, let us first recall the result of the long-time dynamics on \eqref{bip}.
Denote
 $$X=\{\phi\in L^\infty(\R):\ \phi\ \mbox{is nonnegative and compactly supported} \}.$$
Following \cite{DM}, we introduce a one-parameter family of initial data which satisfies

(i)\ $\phi_\sigma\in X$ for every $\sigma>0$ and the map $\sigma\mapsto\phi_\sigma$ is continuous
 from $\R$ to $L^\infty(\R)$;

(ii)\ $\phi_{\sigma_1}\leq,\not\equiv\phi_{\sigma_2}$ in the a.e. sense, for all $0<\sigma_1<\sigma_2$;

(iii)\ $\lim_{\sigma\to0}\phi_\sigma=0$ in $L^\infty(\R)$.

\smallskip Denote by $u_\sigma$ the solution of \eqref{bip} with the initial datum $u_0=\phi_\sigma\, (\sigma>0)$.
By \cite[Theorem 1.3 and Example 1.6]{DM}, we can conclude that there exist $\hat \sigma>0$ and $x_0\in\R$ such that

 \begin{itemize}

\item $\lim_{t\to\infty}u(t,x)=0$ uniformly in $\R$ if $0<\sigma<\hat\sigma;$

\item $\lim_{t\to\infty}u(t,x)=V(x-x_0)$ uniformly in $\R$ if $\sigma=\hat\sigma;$

\item $\lim_{t\to\infty}u(t,x)=1$  locally uniformly in $\R$ if $\sigma>\hat\sigma$.

\end{itemize}
Here, $V$ is a positive symmetrically decreasing solution of
 $$
 V''+f(V)=0\ \ \mbox{in}\ \R,\ \ V(0)=\theta^*,\ \ V'(0)=0.
 $$
Relevant works on \eqref{bip} can be found, for instance, in \cite{K,MX,MX2,P,Zla}.

Roughly speaking, the above conclusion shows that the species dies out in the long run
when the initial data are below a critical value, and the species propagates successfully only when the initial data are larger than the critical value, and hence the strong Allee effect occurs.

Allee effects are very relevant to many conservation programmes, where scientists and managers are often working with populations that have been reduced to low densities or small numbers. Different measures have been taken to protect endangered species and their habitats. Among the effective measures, the approach of providing protected areas has become most popular over the past decades; one may refer to, for instance, \cite{CLMS,CSW,DDNY,DuLiang,DPW,DuS2,DuS3} and the references therein for related studies.

To save the endangered species, in the current paper we introduce a protection zone within which the growth of the species is governed by a Fisher-KPP type nonlinearity. It is widely accepted that the Fisher-KPP type nonlinearity is another fundamental reaction term in describing the evolution of biological population. More precisely, assume that the species density $u$ satisfies
\begin{equation}\label{mop}
\left\{
\begin{array}{ll}
 u_t = u_{xx} + f(u), &  t>0, x\in\R,\\
 u(0,x)=u_0(x), & x\in \R,
\end{array}
\right.
\end{equation}
where the initial datum $u_0\geq,\not\equiv0$ is compactly supported. The nonlinear reaction term $f$ satisfies
\begin{equation}\label{mono}
f(0)=f(1)=0<f'(0), \ \ f'(1)<0,\ \ (1-u)f(u) >0,\ \ \forall u>0, \ u\neq 1.
\end{equation}
We call that $f$ is a {\it monostable} nonlinearity.

It is well known that the solution $u$ of \eqref{mop} has the following long-time behavior \cite{AW1,AW2}:

\begin{itemize}

\item $\lim_{t\to\infty}u(t,x)=1$\ \ locally uniformly in $\R$.

\end{itemize}
This means biologically that the propagation of the species is always successful regardless of its initial stage.

If the species subject to the strong Allee effect is protected in a bounded and connected region,
say $[-L,L]$ for a given $L>0$, so that it follows the Fisher-KPP nonlinear growth therein,
we are led to the following problem
\begin{equation}\label{p-i}
\left\{
\begin{array}{ll}
 u_t = u_{xx} + f(u), &  t>0, -L<x<L,\\
 u_t = u_{xx} + g(u), &  t>0, x\in\R\setminus{[-L,L]},\\
 u(t, -L-0)= u(t, -L+0), & t>0,\\
 u(t, L-0)= u(t, L+0), & t>0,\\
 u_x(t, -L-0)= u_x(t, -L+0), & t>0,\\
 u_x(t, L-0)= u_x(t, L+0), & t>0,\\
 u(0,x)=u_0(x)\geq 0, & x\in \R.
\end{array}
\right.
\end{equation}
Here, for any given $t>0$, $u(t, -L-0)$ and $u_x(t, -L-0)$ represent, respectively,
the left limit value and the left derivative of $u$ with respect to $x$ at $x=-L$, and
$u(t, -L+0)$ and $u_x(t, -L+0)$ are respectively the right limit value and the right derivative of $u$ with respect to $x$ at $x=-L$.

The continuous connection condition $u(t, -L-0)= u(t, -L+0)$ and $u(t, L-0)= u(t, L+0)$ is a natural assumption
at the boundary points $x=-L,L$ of the protection zone. Furthermore, it is necessary to assume the zero flux through the points $x=-L,L$, which is equivalent to require that the fifth and sixth lines in \eqref{p-i} hold.

Given $u_0\in X$, it is known that \eqref{p-i} admits a unique nonnegative solution $u\in C^{1,2}((0,\infty)\times(\R\setminus\{\pm L\}))\cap C^{\alpha/2,1+\alpha}((0,\infty)\times\R)$ for any $\alpha\in(0,1)$, and
$u$ exists for all time $t>0$; refer to \cite{DLPZ,JPS,vonB}. To further simplify \eqref{p-i}, we consider
the scenario that the initial data $u_0$ are symmetric with respect to the origin in the sense that $u_0(x)=u_0(-x)$ a.e. in $\R$.
Clearly, $u(t,x)=u(t,-x)$ for all $t>0,\,x\in\R$, and in turn, $u_x(t,0)=0,\,\forall t>0$. Therefore, it suffices to investigate the following problem:
\begin{equation}\label{p}
\left\{
\begin{array}{ll}
 u_t = u_{xx} + f(u), &  t>0, 0<x<L,\\
 u_t = u_{xx} + g(u), &  t>0, x>L,\\
 u_x(t, 0)= 0, & t>0,\\
 u(t, L-0)= u(t, L+0), & t>0,\\
 u_x(t, L-0)= u_x(t, L+0), & t>0,\\
 u(0,x)=u_0(x)\geq 0, & x\in \R_+,
\end{array}
\right.
\end{equation}
where $\R_+=[0,\infty)$, $L>0$ and the protection zone is $[0,L]$.

\subsection{Statement of our main results}
Our primary goal in this paper is to examine the role of the protection zone by studying the
dynamics of the reaction-diffusion model \eqref{p} with the strong Allee effect.

Throughout the paper, unless otherwise specified, in addition to the previously
imposed conditions \eqref{bi}, \eqref{unbalance} and \eqref{mono} on $f,\,g$, we further assume that

\smallskip

{\bf{(H)}}\ \ The functions $f,\,g$ are {\it globally Lipschitz} and
$g(u)<f(u)\ \mbox{ for all } 0 < u < 1.$

\smallskip
Let us denote
 $$
 X^+=\{\phi\in L^\infty(\R_+):\ \phi\ \mbox{is nonnegative and compactly supported} \}.
 $$
Similarly as in \cite{DM}, we introduce a one-parameter family of initial data
$u_0=\phi_\sigma\, (\sigma>0)$, fulfilling the following conditions:

($\Phi_1$)\ $\phi_\sigma\in X^+$ for every $\sigma>0$ and the map $\sigma\mapsto\phi_\sigma$ is continuous
 from $\R_+$ to $L^\infty(\R_+)$;

($\Phi_2$)\ $\phi_{\sigma_1}\leq,\not\equiv\phi_{\sigma_2}$ in the a.e. sense, for all $0<\sigma_1<\sigma_2$;

($\Phi_3$)\ $\lim_{\sigma\to0}\phi_\sigma=0$ in $L^\infty(\R_+)$.

Given $\phi_\sigma\in X^+$, we define
 $$\hbar=\inf\{h>0:\ \phi_\sigma(x)=0\ \ \mbox{a.e.}\ \forall x\in(h,\infty)\}.$$

%its support spt($\phi_\sigma$) as the smallest closed set $A\subset\R_+$
%such that $\phi_\sigma=0$ a.e. in $\R_+\setminus A$.

\smallskip
For any given $u_0=\phi_\sigma\, (\sigma>0)$, using the comparison principle and classical theory for parabolic equations it is easy to check that problem \eqref{p} admits a unique positive solution $u\in C^{1,2}((0,\infty)\times(\R_+\setminus\{L\}))\cap C^{\alpha/2,1+\alpha}((0,\infty)\times[0,\infty))$ which is uniformly bounded solution with respect to both space and time.
Therefore, one may expect that the long time behavior of solutions will be determined by nonnegative and bounded stationary solutions
of \eqref{p}, that is, the solutions of the following elliptic equation:
\begin{equation}\label{U}
\left\{
\begin{array}{ll}
 U'' + f(U)=0, & 0<x<L,\\
 U'' + g(U)=0, &  x>L,\\
 U'(0)=0,             \\
 U( L-0)= U(L+0), \\
 U'( L-0)= U'(L+0).
\end{array}
\right.
\end{equation}

\smallskip
Now we list some possible situations on the asymptotic behavior of the solutions to \eqref{p}:
\begin{itemize}

\item {\it vanishing} :
$\lim_{t\to\infty}u(t,x)=0$ uniformly in $[0,\infty)$;

\item {\it spreading} : $\lim_{t\to\infty}u(t,x)=1$  locally uniformly in $[0,\infty)$;

\item {\it transition} : $\lim_{t\to\infty}|u(t,x)-U(x)|=0 \mbox{ locally uniformly in $[0,\infty)$}$,
where $U$ is a ground state of \eqref{U}.
\end{itemize}

In the above, by saying a ground state $U$ of \eqref{U},
we mean that $U$ is  a solution to \eqref{U}
satisfying
\[ U(x)>0,\  \ \forall x\in[0,\infty),\ \ \ U'(0)=0,\ \ \ U(+\infty)=0,\]
and when $x>L$, $U(\cdot)=V(\cdot-z)$, where $z\in\R$ and $V$ is the unique positive decreasing solution of
 \[V'' + g(V)=0,\ \ \ V(0)=\theta^*,\ \ \ V(\pm\infty)=0.\]

Denote
 $$
 L_*=\frac{1}{\sqrt{f'(0)}}\arctan\sqrt{-\frac{g'(0)}{f'(0)}}.
 $$
By the proof of the assertion (I) in Theorem \ref{thm:dybe} below, we know that if $0<L<L_*$,
then problem \eqref{U} has a ground state. This allows us to define
\begin{equation}\label{L8}
L^*:=\sup\{L_0 > 0: \ \mbox{ problem \eqref{U} with $L=L_0$ has a ground state}\}.
\end{equation}
In view of Lemmas \ref{lem:LL1} and \ref{propr1} in Section 3, problem \eqref{U} admits a ground state
for any $0<L<L^*$, and $L^*$ can be bounded by
\[
L^*\leq\int_0^{\theta^*}\frac{1}{\sqrt{2\int_r^{\theta^*}f(s)ds}}dr<\infty.
\]

We are now in a position to give a satisfactory description of the long-time dynamical behavior of problem \eqref{p}.

\begin{thm}\label{thm:dybe}
Let $u$ be the solution of \eqref{p} with $u_0=\phi_\sigma\in X^+$,
and $L_*\leq L^*$ be defined as before. The following assertions hold.
\begin{itemize}
\item[\vspace{10pt}(I)] \; (Small protection zone case) If $0<L<L_*$, then there exist $\sigma_*,\ \sigma^*\in(0,\infty)$ with $\sigma_*\leq\sigma^*$
such that the following trichotomy holds:
\begin{itemize}
\item[\vspace{10pt}(i)] \;Vanishing happens when $0<\sigma<\sigma_*;$
\item[\qquad(ii)] \;Transition happens when $\sigma\in[\sigma_*,\sigma^*];$
\item[\qquad(iii)] \;Spreading happens when $\sigma>\sigma^*$.
\end{itemize}

\item[\qquad(II)] \;(Medium-sized protection zone case) If $L_*<L^*$ and $L_*<L<L^*$, then there exists $\sigma^*\in(0,\infty)$
such that the following dichotomy holds:
\begin{itemize}
\item[\vspace{10pt}(i)] \;Transition happens when $\sigma\in(0,\sigma^*];$
\item[\qquad(ii)] \;Spreading happens when $\sigma>\sigma^*$.
\end{itemize}
\item[\qquad(III)] \;(Large protection zone case) If $L>L^*$, then spreading happens for all $\sigma>0$.
\end{itemize}

\end{thm}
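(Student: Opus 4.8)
\emph{Overall strategy.}
The proof reduces to three facts about the one‑parameter family $u_\sigma$. First, by the convergence analysis of \eqref{p} (bounded solutions tend to a single nonnegative bounded stationary solution, classified as $0$, $1$, or a ground state), every solution of \eqref{p} ultimately vanishes, spreads, or transitions, with no other possibility. Second, by the comparison principle together with ($\Phi_2$), the set $\mathcal V:=\{\sigma>0:\ u_\sigma\text{ vanishes}\}$ is downward closed and $\mathcal S:=\{\sigma>0:\ u_\sigma\text{ spreads}\}$ is upward closed, and, as explained below, both are open. Third, $\mathcal S\neq\emptyset$ always, while $\mathcal V\neq\emptyset$ exactly when $0<L<L_*$. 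Granting these, put $\sigma_*:=\sup\mathcal V$ (with $\sup\emptyset=0$) and $\sigma^*:=\inf\mathcal S$. Disjointness of $\mathcal V$ and $\mathcal S$ gives $\sigma_*\le\sigma^*$; openness gives $\mathcal V=(0,\sigma_*)$ and $\mathcal S=(\sigma^*,\infty)$; and the outcome trichotomy forces transition for every $\sigma\in[\sigma_*,\sigma^*]$. This is precisely (I) once we know $0<\sigma_*\le\sigma^*<\infty$. For (II) the extra input is $\sigma^*>0$ with $\mathcal V=\emptyset$, and for (III) it is $\mathcal S=(0,\infty)$.

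\emph{The extreme‑$\sigma$ behaviour.}
\textbf{(a) Spreading for large $\sigma$.} Since $g\le f$ on $(0,1)$ by \textbf{(H)}, the solution of \eqref{p} dominates the solution of \eqref{bip} issued from the same (even) datum. By the known large‑datum spreading for \eqref{bip}, a datum that is $\ge c$ on a long enough interval for some $\theta^*<c<1$ spreads in \eqref{bip}, hence in \eqref{p}; since $\phi_\sigma$ exhausts such data as $\sigma\to\infty$, $\mathcal S\neq\emptyset$ and $\sigma^*<\infty$. \textbf{(b) Vanishing for small $\sigma$ when $L<L_*$.} Because $L<L_*<\frac{\pi}{2\sqrt{f'(0)}}$, one builds a positive stationary supersolution $\overline U$ of \eqref{U} equal to a constant $\kappa\in(0,\theta)$ on $[L,\infty)$ and solving $\overline U''+f(\overline U)=0$, $\overline U'(0)=0$, on $[0,L]$; it is a strict supersolution on $\{x>L\}$, so the solution of \eqref{p} from $\overline U$ decreases to the largest stationary solution below $\overline U$, which — since by the description of \eqref{U} for $L<L_*$ (Lemmas \ref{lem:LL1} and \ref{propr1}) all ground states are bounded away from $0$ — equals $0$ when $\kappa$ is small. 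As $\phi_\sigma\le\min\overline U$ for all small $\sigma$ by ($\Phi_3$), such $u_\sigma$ vanish; thus $\mathcal V\neq\emptyset$ and $\sigma_*>0$. \textbf{(c) No vanishing when $L>L_*$.} The principal eigenvalue $\lambda_1(L)$ of the linearisation of \eqref{p} at $0$ (namely $\psi''+f'(0)\psi$ on $(0,L)$, $\psi''+g'(0)\psi$ on $(L,\infty)$, with $\psi'(0)=0$, $\psi(+\infty)=0$) satisfies $\lambda_1(L)<0$ for $L<L_*$ and $\lambda_1(L)>0$ for $L>L_*$, the equality $\lambda_1(L_*)=0$ being exactly the identity $\sqrt{f'(0)}\tan\!\big(\sqrt{f'(0)}L_*\big)=\sqrt{-g'(0)}$ defining $L_*$. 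For $L>L_*$, truncating the eigenfunction at a far point $N$ and lowering $f'(0),g'(0)$ slightly yields a compactly supported positive subsolution $\underline w$ of \eqref{U}; the solution of \eqref{p} from $\underline w$ increases to a positive stationary state, and by the strong maximum principle $u_\sigma(1,\cdot)\ge\varepsilon\underline w$ for some $\varepsilon>0$ and \emph{every} $\sigma>0$, so $\liminf_{t\to\infty}u_\sigma>0$: no solution vanishes, i.e. $\mathcal V=\emptyset$. Finally, when $L_*<L<L^*$ a ground state $U$ of \eqref{U} exists (by \eqref{L8} and Lemmas \ref{lem:LL1}, \ref{propr1}); for small $\sigma$, ($\Phi_2$)–($\Phi_3$) give $\phi_\sigma\le U$, hence $u_\sigma\le U<1$ forever, so $u_\sigma$ cannot spread and must transition, giving $\sigma^*>0$.

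\emph{Openness and assembly.}
Openness of $\mathcal V$ and $\mathcal S$ follows from continuous dependence on the initial datum on finite time intervals, uniform in $x$ (differences of solutions solve a linear parabolic equation with bounded coefficients). If $u_\sigma$ vanishes, then $u_\sigma(t_1,\cdot)\le\tfrac12\min\overline U$ for some $t_1$, so $u_{\sigma'}(t_1,\cdot)\le\min\overline U\le\overline U$ for $\sigma'$ near $\sigma$ and $u_{\sigma'}$ vanishes. If $u_\sigma$ spreads, then $u_\sigma(t_1,\cdot)$ already exceeds a spreading‑forcing block on a long interval, and so does $u_{\sigma'}(t_1,\cdot)$ for $\sigma'$ near $\sigma$. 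Assembling: in case (I), $0<\sigma_*\le\sigma^*<\infty$, $\mathcal V=(0,\sigma_*)$, $\mathcal S=(\sigma^*,\infty)$, and the trichotomy forces transition on $[\sigma_*,\sigma^*]$; in case (II), $\mathcal V=\emptyset$, $0<\sigma^*<\infty$, and the only remaining options are transition on $(0,\sigma^*]$ and spreading on $(\sigma^*,\infty)$; in case (III), $L>L^*$ excludes ground states (no transition) while $L>L^*\ge L_*$ excludes vanishing, leaving spreading for all $\sigma>0$.

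\emph{Main obstacle.}
The real work lies in the convergence input — that a bounded solution of \eqref{p} on the half‑line with the mixed KPP/bistable reaction converges to a single stationary solution of \eqref{U}, and that those stationary solutions are exhausted by $0$, $1$, and ground states — which underlies both the outcome trichotomy and the identification of the threshold solutions as transition solutions; this is the technical heart and I would take it from the preliminary analysis of \eqref{p}. The remaining delicate points are the eigenvalue characterisation of $L_*$ and the construction of the two barriers in (b)–(c). A minor subtlety is that the family $\{\phi_\sigma\}$ need not have uniformly bounded supports a priori; but ($\Phi_2$) gives $\operatorname{supp}\phi_\sigma\subseteq\operatorname{supp}\phi_{\sigma_0}$ for $\sigma\le\sigma_0$, so the supports stay bounded along $\sigma\to0$, which is all that is used.
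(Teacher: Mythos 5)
Your overall skeleton (outcome trichotomy from the convergence theorem, monotone open sets of vanishing/spreading parameters, extreme-$\sigma$ behaviour, assembly) is the same as the paper's, and your treatment of spreading for large $\sigma$, of non-vanishing for $L>L_*$ via a truncated eigenfunction subsolution, and of transition for small $\sigma$ in case (II) via comparison with a ground state are all sound (your eigenvalue sign convention is the opposite of the paper's but self-consistent). However, step (b) contains a genuine gap: the object you propose does not exist. If $\overline U$ equals a constant $\kappa\in(0,\theta)$ on $[L,\infty)$ and solves $\overline U''+f(\overline U)=0$ on $[0,L]$ with $\overline U'(0)=0$ and $\overline U(L)=\kappa$, then on $(0,L)$ we have $\overline U\in(0,1)$, hence $\overline U''=-f(\overline U)<0$, so $\overline U$ is strictly decreasing there and $\overline U'(L-0)<0=\overline U'(L+0)$. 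The corner at $x=L$ is therefore convex (the distributional second derivative carries a \emph{positive} Dirac mass), which is a subsolution-type kink, not a supersolution; the comparison principle cannot be applied. Worse, no stationary supersolution of this shape can exist at all: a supersolution would need $\overline U'(L-0)\ge\overline U'(L+0)=0$, while $\overline U'(0)\le 0$ and $-\overline U''\ge f(\overline U)>0$ on the part of $[0,L]$ where $\overline U<1$ force $\overline U'$ to be nonincreasing and nonpositive, a contradiction. This is exactly why the paper does not use a time-independent barrier but instead exploits $\lambda_1(L)>0$ for $L<L_*$ and the \emph{time-decaying} supersolution $\delta e^{-\lambda_1 t/2}\varphi_1^L(x)$ (Lemma \ref{lem:vanishing}). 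Incidentally, your justification that the limit below $\overline U$ is $0$ because ``all ground states are bounded away from $0$'' is also wrong as stated: ground states satisfy $U(+\infty)=0$.

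The same flaw propagates into your openness argument for the vanishing set. Knowing that $u_{\sigma'}(t_1,\cdot)$ is uniformly below a small positive constant does not imply vanishing, precisely because a small constant is not a supersolution inside the protection zone, where $f>0$ pushes the solution up; one must get the perturbed solution \emph{under the spatially decaying profile} $\delta\varphi_1^L$, and continuous dependence in $L^\infty$ on a finite time interval does not control the tail as $x\to\infty$. This tail control is the delicate point of the paper's proof of Lemma \ref{lem:sd0}(i), and it is supplied there by the decay estimates of Lemmas \ref{lem:decay} and \ref{lem:decay1} (the solution of a vanishing orbit is eventually dominated by a translate of the bistable ground state $\Phi$, hence decays like $e^{-\sqrt{-g'(0)-\epsilon_0}\,x}$, faster than $\varphi_1^L$ at infinity, while the Gaussian bound controls the perturbation for $x$ large). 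Without replacing your stationary barrier by the eigenfunction-based argument and adding these decay estimates, both the nonemptiness of $\mathcal V$ for $L<L_*$ and its openness remain unproved, so assertion (I) is not established by your proposal; the rest of the assembly is fine once these two points are repaired along the paper's lines.
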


Theorem \ref{thm:dybe} indicates that only if the protection zone is suitably long (i.e., $L>L_*$),
the species will survive in the entire space all the time regardless of its initial data.

\begin{remark} Concerning Theorem \ref{thm:dybe}, we would like to make the following comments.

\begin{itemize}
\item[\rm{(i)}] Concrete examples of  $f$ and $g$ satisfying \eqref{bi}, \eqref{unbalance}, \eqref{mono} and (H) can be constructed to show that $L_*<L^*$ holds; on the other hand, numerical simulations have been performed to suggest that $L_*=L^*$ could be possible for certain functions $f$ and $g$.

\item[\rm{(ii)}] It is unclear to us about the long-time behavior of $u$ at the critical values $L_*,\,L^*$; furthermore,
in Theorem \ref{thm:dybe}(II), it would be interesting to know whether $\sigma_*=\sigma^*$ or $\sigma_*<\sigma^*$.

\item[\rm{(iii)}] The above comments also apply to Theorem \ref{thm:dybe1} below.

\end{itemize}

\end{remark}

\smallskip
As mentioned before, the protection zone $[-L,L]$ designed in \eqref{p-i} is a connected interval.
We are now interested in the effect of structure of protection zone on the dynamics. For sake of simplicity, we
consider the situation that a protection zone with the same length $2L$ is designed in a way that it consists of two separate intervals, say $[-L_2,-L_1]$ and $[L_1,L_2]$ with $L_1>0$ and $L_2-L_1=L$. It is also assumed that the initial data $u_0$ are symmetric with respect to the origin. Therefore, as in \eqref{p}, it becomes sufficient to study the following problem:
\begin{equation}\label{q}
\left\{
\begin{array}{ll}
 u_t = u_{xx} + f(u), &  t>0,  x \in(L_1, L_2),\\
 u_t = u_{xx} + g(u), &  t>0, x\in(0, L_1)\cup(L_2,\infty),\\
 u(t, L_i-0)= u(t, L_i+0), & t>0, i=1, 2, \\
 u_x(t,  L_i-0)= u_x(t, L_i+0), & t>0, i=1, 2, \\
 u_x(t,0)=0, & t>0,\\
 u(0,x)=u_0(x)\geq 0, & x\in \R_+,
\end{array}
\right.
\end{equation}
where $L_2 > L_1>0$, and the protection zone is $[L_1, L_2]$.
We always set
\[
L=L_2-L_1,
\]

Let $\tilde{L}_*$ be given in Lemma \ref{lem:1eigenvaluemp2}.
As in the proof of Theorem \ref{thm:dybe}, it can be shown that
the stationary problem (refer to \eqref{q-statio} in Section 2) corresponding to \eqref{q}
has a ground state provided that $0<L<\tilde L_*$. Thus, we can define
\begin{equation}\label{L8-1}
\tilde L^*:=\sup\{L_0 > 0: \ \mbox{ problem \eqref{q-statio} with $L=L_0$ has a ground state}\}.
\end{equation}
Moreover, the same analysis as in that of Lemma \ref{lem:LL1} allows one to conclude that
problem \eqref{q-statio} admits a ground state if $0<L<\tilde L^*$, and
by Proposition \ref{propr1-1} below, we further have
\begin{equation}\label{L8-2}
\tilde L^*\leq2\int_0^{\theta^*}\frac{1}{\sqrt{2\int_r^{\theta^*}f(s)ds}}dr.
\end{equation}

Our result on problem \eqref{q} can be stated as follows.
\begin{thm}\label{thm:dybe1}
Let $u$ be the solution of \eqref{q} with $u_0=\phi_\sigma\in X^+$, and
 $\tilde{L}_*$ and $\tilde{L}^*$ be given in Lemma \ref{lem:1eigenvaluemp2} and \eqref{L8-1}, respectively.
 The following assertions hold.
\begin{itemize}
\item[\vspace{10pt}(I)] \; If $0<{L}<\tilde{L}_*$, then there exist $\tilde\sigma_*,\ \tilde\sigma^*\in(0,\infty)$ with $\tilde\sigma_*\leq\tilde\sigma^*$
such that the following trichotomy holds:
\begin{itemize}
\item[\vspace{10pt}(i)] \;Vanishing happens when $0<\sigma<\tilde\sigma_*;$
\item[\qquad(ii)] \;Transition happens when $\sigma\in[\tilde\sigma_*,\tilde\sigma^*];$
\item[\qquad(iii)] \;Spreading happens when $\sigma>\tilde\sigma^*$.
\end{itemize}

\item[\qquad(II)] \;If $\tilde{L}_*<\tilde{L}^*$ and $\tilde{L}_*<{L}<\tilde{L}^*$, then there exists $\tilde\sigma^*\in(0,\infty)$
such that the following dichotomy holds:
\begin{itemize}
\item[\vspace{10pt}(i)]  \;Transition happens when $\sigma\in(0,\tilde\sigma^*];$
\item[\qquad(ii)]\;Spreading happens when $\sigma>\tilde\sigma^*$.
\end{itemize}

\item[\qquad(III)] \;If ${L}>\tilde{L}^*$, then spreading happens for all $\sigma>0$.
\end{itemize}

\end{thm}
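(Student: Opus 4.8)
The plan is to mirror, almost verbatim, the strategy used for Theorem~\ref{thm:dybe}, since problems \eqref{p} and \eqref{q} differ only in the geometry of the protection zone: a single interface at $L$ versus two interfaces at $L_1$ and $L_2$, with an extra bistable segment $(0,L_1)$ adjacent to the Neumann boundary. First I would record the three ingredients that do not involve $\sigma$. (a) A classification of asymptotic behavior: every solution of \eqref{q} starting from $\phi_\sigma$ either vanishes, spreads, or converges locally uniformly to a ground state of \eqref{q-statio}; this follows from uniform boundedness, parabolic regularity, and a zero-number / intersection-comparison argument, together with a phase-plane analysis of \eqref{q-statio} showing that the only bounded nonnegative steady states are $0$, $\theta$, $1$ and the functions obtained by gluing $V(\cdot-z)$ across $[L_1,L_2]$. (b) Existence of a ground state of \eqref{q-statio} for every $0<L<\tilde L^*$, together with the finiteness bound \eqref{L8-2}, which I would take from the stated analogues of Lemma~\ref{lem:LL1} and Proposition~\ref{propr1-1}. (c) Monotonicity in $\sigma$: by the comparison principle and ($\Phi_2$), if spreading occurs at some $\sigma_0$ then it occurs for all $\sigma>\sigma_0$, and if vanishing occurs at $\sigma_0$ then it occurs for all $0<\sigma<\sigma_0$. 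Consequently the vanishing set and the spreading set are (relatively open) intervals, and I may set $\tilde\sigma_*=\sup\{\sigma>0:\ \text{vanishing}\}\in[0,\infty]$ and $\tilde\sigma^*=\sup\{\sigma>0:\ \text{non-spreading}\}\in[0,\infty]$.

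For assertion (I), $0<L<\tilde L_*$: the role of $\tilde L_*$ from Lemma~\ref{lem:1eigenvaluemp2} is that the principal eigenvalue of the linearization at $u\equiv0$ — i.e.\ of $-\varphi''-c_L(x)\varphi$ on $\R_+$ with Neumann condition at $0$, where $c_L=g'(0)$ on $(0,L_1)\cup(L_2,\infty)$ and $c_L=f'(0)$ on $(L_1,L_2)$ — is positive precisely when $L<\tilde L_*$. I would use this to construct a small, exponentially decaying supersolution dominating $\phi_\sigma$ for $\sigma$ small (via ($\Phi_3$)), yielding $\tilde\sigma_*>0$; and I would use initial data lying above $\theta^*$ on a long interval, together with the bistable spreading recalled in Section~\ref{sec:pro}, to get $\tilde\sigma^*<\infty$. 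Then $\tilde\sigma_*\le\tilde\sigma^*$ is automatic, and it remains to show that transition holds on $[\tilde\sigma_*,\tilde\sigma^*]$: openness of the vanishing and spreading sets forces $\tilde\sigma_*$ and $\tilde\sigma^*$ out of those regimes, monotonicity then propagates a ground-state limit to the whole closed interval, and ingredient (a) pins the limit down to a ground state.

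For assertion (II), $\tilde L_*<L<\tilde L^*$: a ground state still exists by ingredient (b), but now the principal eigenvalue above is negative, so $u\equiv0$ is linearly unstable; since $u(t,\cdot)>0$ for $t>0$, the solution cannot vanish for any $\sigma>0$, i.e.\ $\tilde\sigma_*=0$. Combining this with $\tilde\sigma^*<\infty$ and the transition/spreading alternative yields the dichotomy. For assertion (III), $L>\tilde L^*$: no ground state exists, so transition is impossible by ingredient (a); and $L>\tilde L^*\ge\tilde L_*$ keeps the eigenvalue negative, so vanishing is impossible; hence spreading is the only remaining behavior, for every $\sigma>0$.

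The main obstacle I anticipate is the sharp-threshold step in (I): proving that at $\sigma=\tilde\sigma_*$, hence on all of $[\tilde\sigma_*,\tilde\sigma^*]$, the solution actually converges to a genuine ground state rather than oscillating or merely remaining bounded away from $0$ and $1$. This needs the zero-number machinery adapted to a piecewise-defined reaction with two interior interfaces — controlling the number of intersections of $u(t,\cdot)$ with shifted ground states and with constants, excluding non-convergence, and establishing openness of the vanishing and spreading sets (the latter by constructing compactly supported stationary subsolutions above $\theta^*$ once the solution is large enough on a long interval). A secondary technical point is checking that the extra bistable block $(0,L_1)$ produces no additional bounded steady states capable of spoiling the classification in (a); this I would settle by the phase-plane analysis of \eqref{q-statio} mentioned above.
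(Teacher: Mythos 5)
Your proposal follows essentially the same route as the paper, whose entire proof of Theorem \ref{thm:dybe1} is the remark that, with Theorem \ref{thm:convergence} and Lemma \ref{lem:1eigenvaluemp2} in hand (plus the analogues of Lemma \ref{lem:LL1} and Proposition \ref{propr1-1} for $\tilde L^*$), one repeats the Section~3 argument verbatim: eigenfunction supersolutions for vanishing when $\tilde\lambda_1(L)>0$, truncated-eigenfunction subsolutions excluding vanishing when $\tilde\lambda_1(L)<0$, openness and monotonicity of the vanishing/spreading sets, and the convergence theorem to pin down transition. Only a cosmetic slip: $\theta$ is not a stationary solution of \eqref{q-statio} (since $f(\theta)>0$ on $(L_1,L_2)$), and a ground state there is not just $V(\cdot-z)$ glued across $[L_1,L_2]$ but also carries a periodic piece on $[0,L_1]$; neither point affects your argument.
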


In view of Theorem \ref{thm:dybe1}, we also see that only if the protection zone is suitably long (i.e., $L>\tilde L_*$),
will the species survive in the entire space regardless of its initial data.

Furthermore, by comparing the two types of protection zone designed in \eqref{p} and \eqref{q}, we will find that the connected one is better for species survival than a separate one. More detailed discussion on our results will be given in the final section.

The rest of our paper is organized as follows. In Section 2, we prepare some preliminary results,
including the analysis of the associated stationary solution problems and a general convergence result mainly due to \cite{DM}.
Sections 3 and 4 are devoted to the proof of Theorem \ref{thm:dybe} and Theorem \ref{thm:dybe1}, respectively. In Section 5, we end the paper with the interpretation of the biological implication of our theoretical results.

\section{Some preliminary Results}\label{sec:basic}

In this section, we present some preliminary results which will be frequently used later.

\subsection{Stationary solutions}\label{sub:statin} A stationary solution of \eqref{p} is a solution of
\eqref{U}. Each solution of $q''+f(q)=0$
corresponds to a trajectory $p^2=F(q)-C_1$
in the $qp$-plane, where $p=q'$, $C_1$ is a constant and $F(q)=-2\int_0^qf(v)dv$. Each solution of
$q''+g(q)=0$
corresponds to a trajectory $p^2=G(q)-C_2$ where $C_2$ is a constant and $G(q)=-2\int_0^qg(v)dv$.
Furthermore, for any solution of \eqref{U}, the connection condition at $x= L$ is fulfilled whenever the
trajectory $q'^2=F(q)-C_1$ intersects the trajectory $q'^2=G(q)-C_2$. Note that such an intersection may not be unique, so that several stationary solutions of \eqref{p} can be derived from different trajectories.

It is easy to check that the unique positive symmetrically decreasing solution $V$ of
\[q'' + g(q)=0,\ \ \ q(0)=\theta^*,\ \ \ q(\pm\infty)=0,\]
corresponds to the trajectory $p^2=-2\int_0^qg(v)dv$, which is denoted by $\Gamma_0$; and for $\beta\in(0,1)$,
the unique positive symmetrically decreasing solution $W$ of
\[q'' + f(q)=0,\ \ \ q(0)=\beta,\ \ \ q(\pm l_\beta)=0,\]
for some constant $l_\beta>0$, corresponds to the trajectory $p^2=2\int_q^\beta f(v)dv$, which is denoted by $\Gamma_\beta$.
Using the phase plane analysis, together with the condition {\bf{(H)}}, we are able to obtain the following lemma.

\begin{lem}\label{lem:ppa}
For any $\beta\in(0,\theta^*)$, there are exactly two points of intersection of $\Gamma_0$ and $\Gamma_\beta$. If $\beta=\theta^*$, there is a unique point $(\theta^*,0)$ of intersection of $\Gamma_0$ and  $\Gamma_{\theta^*}$.
If $\beta\in(\theta^*,1)$, then $\Gamma_0$ does not intersect $\Gamma_\beta$.
\end{lem}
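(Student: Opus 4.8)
The plan is to carry out an explicit phase-plane computation. Recall that $\Gamma_0$ is the curve $p^2 = G(q) = -2\int_0^q g(v)\,dv$ for $q\in[0,\theta^*]$, the (half of the) homoclinic loop at the origin associated with $q''+g(q)=0$; since $g'(0)<0$ the curve $G$ is strictly increasing near $0$, and by \eqref{bi} together with the definition of $\theta^*$ (namely $\int_0^{\theta^*} g = 0$), we have $G(q)>0$ on $(0,\theta^*)$ and $G(\theta^*)=0$, with $G(\theta^*)=0$, $G'(\theta^*)=-2g(\theta^*)<0$, so $\Gamma_0$ touches the $q$-axis precisely at $q=0$ and $q=\theta^*$. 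Likewise $\Gamma_\beta$ is the curve $p^2 = F_\beta(q) := 2\int_q^\beta f(v)\,dv$ for $q\in[0,\beta]$; since $f>0$ on $(0,1)$ by \eqref{mono}, $F_\beta$ is strictly decreasing in $q$ on $[0,\beta]$, positive on $[0,\beta)$, and vanishes at $q=\beta$. Thus $\Gamma_0$ is an arch whose right foot is at $q=\theta^*$, and $\Gamma_\beta$ is an arch whose right foot is at $q=\beta$. Each is symmetric under $p\mapsto -p$, so it suffices to analyze intersections in the closed upper half-plane $\{p\ge 0\}$ and then reflect.

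First I would reduce the intersection problem to a single scalar equation. Two points $(q,p)$ with $p\ge 0$ lie on both curves iff $q$ lies in $[0,\min(\theta^*,\beta)]$ (for $\beta\le\theta^*$ this is $[0,\beta]$) and
\[
h(q) := G(q) - F_\beta(q) = -2\int_0^q g(v)\,dv - 2\int_q^\beta f(v)\,dv = 0,
\]
together with the common value $G(q)=F_\beta(q)\ge 0$. I compute $h'(q) = -2g(q) - 2f(q) = -2\big(f(q)+g(q)\big)$; here I must split by sign. On $(0,\theta)$ we have $f(q)>0$ but $g(q)<0$, so the sign of $f+g$ is not immediate — however, the crucial structural input is not the monotonicity of $h$ on all of $[0,\beta]$ but its values at the endpoints and a convexity/one-sign-change argument I set up next. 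Observe $h(0) = -2\int_0^\beta f(v)\,dv < 0$ since $f>0$ on $(0,\beta)\subset(0,1)$. At the right end, for $\beta\in(0,\theta^*)$ I evaluate $h(\beta) = -2\int_0^\beta g(v)\,dv = G(\beta) > 0$ because $\beta<\theta^*$ and $G>0$ on $(0,\theta^*)$. By the intermediate value theorem $h$ has at least one zero in $(0,\beta)$; at any such zero the common value $G(q)=F_\beta(q)$ is automatically $\ge F_\beta(\beta^-)$... more simply it equals $F_\beta(q)>0$ for $q<\beta$, so the geometric intersection is genuine and, after reflection, gives a pair $\pm p$ with $p>0$, i.e. two points. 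The cases $\beta=\theta^*$ and $\beta\in(\theta^*,1)$ are handled the same way: for $\beta=\theta^*$, $h(\theta^*)=G(\theta^*)=0$, so $q=\theta^*$ (hence $p=0$) is a zero, giving the single point $(\theta^*,0)$; for $\beta\in(\theta^*,1)$ the relevant $q$-range is $[0,\theta^*]$ (beyond $q=\theta^*$ the curve $\Gamma_0$ does not exist), and there $h(\theta^*) = G(\theta^*)=0$ would again give a boundary touch, so to get "no intersection" I actually need strict inequality on the open interval plus ruling out $q=\theta^*$.

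The main obstacle — and the only place where hypothesis {\bf{(H)}}, $g<f$ on $(0,1)$, is indispensable — is upgrading "at least one zero" to "exactly one zero" in the $\beta\in(0,\theta^*)$ case, and "no zero in $[0,\theta^*]$" in the $\beta\in(\theta^*,1)$ case. For uniqueness when $\beta<\theta^*$: suppose $q_1<q_2$ were two zeros of $h$ in $(0,\beta)$. Then $G(q_i)=F_\beta(q_i)$ and, because $\Gamma_0$ and $\Gamma_\beta$ are trajectories of the ODEs $q''+g=0$ and $q''+f=0$ respectively, one compares the two arcs between $q_1$ and $q_2$ using $g<f$: on the segment $q\in[q_1,q_2]$ the hypothesis $g(v)<f(v)$ forces
\[
G(q_2)-G(q_1) = -2\int_{q_1}^{q_2} g(v)\,dv > -2\int_{q_1}^{q_2} f(v)\,dv = F_\beta(q_2)-F_\beta(q_1),
\]
so $G(q_2)-F_\beta(q_2) > G(q_1)-F_\beta(q_1)$, i.e. $h(q_2)>h(q_1)$, contradicting $h(q_1)=h(q_2)=0$. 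The same one-line estimate gives strict monotonicity of $h$ along any interval joining two putative zeros, hence at most one zero in $(0,\beta)$ in all cases, and for $\beta\in(\theta^*,1)$ it shows $h$ is strictly increasing wherever it could vanish on $(0,\theta^*)$ while $h(0)<0<h(\theta^*)$... wait — here one must be careful: for $\beta>\theta^*$, $h(\theta^*) = -2\int_0^{\theta^*} g - 2\int_{\theta^*}^\beta f = 0 - 2\int_{\theta^*}^\beta f(v)\,dv < 0$, since $f>0$ on $(\theta^*,\beta)$. Combined with $h(0)<0$ and the fact that the above estimate shows $h$ cannot have two zeros, while it could a priori have one — but a zero $q_0\in(0,\theta^*)$ would need $h$ to change sign, forcing (by the strict-monotonicity estimate between $0$-crossings and the negativity at both $q=0$ and $q=\theta^*$) a contradiction: if $h(q_0)=0$ with $h<0$ at both ends of $[0,\theta^*]$, then $h$ has a maximum at some interior point and two sign changes, hence two zeros, impossible. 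Therefore $h<0$ throughout $[0,\theta^*]$ when $\beta\in(\theta^*,1)$, meaning $G(q)<F_\beta(q)$ always, so the $\Gamma_0$-arch lies strictly inside the $\Gamma_\beta$-arch and they are disjoint. I would write the final argument in exactly this order: (1) describe the two arches and their feet; (2) reduce to zeros of $h$ with the positivity side-condition automatic; (3) endpoint sign computations using $\int_0^{\theta^*}g=0$; (4) the $g<f$ estimate giving strict monotonicity between any two zeros, hence uniqueness; (5) translate back to $\pm p$ pairs to get "exactly two", "exactly one", "none".
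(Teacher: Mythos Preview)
The paper does not actually prove this lemma; it only asserts that the result follows from ``phase plane analysis, together with the condition {\bf (H)}''. Your write-up is precisely the natural way to flesh out that sketch, and your overall strategy --- reduce to zeros of $h(q)=G(q)-F_\beta(q)$, compute endpoint signs using $\int_0^{\theta^*}g=0$, and use $g<f$ for uniqueness --- is correct and matches what the paper has in mind.

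One slip worth fixing, however: your computation of $h'$ has a sign error. Since $\frac{d}{dq}\int_q^\beta f(v)\,dv=-f(q)$, one gets
\[
h'(q)=-2g(q)-2\big(-f(q)\big)=2\big(f(q)-g(q)\big),
\]
not $-2\big(f(q)+g(q)\big)$. With the correct sign, hypothesis {\bf (H)} gives $h'>0$ on $(0,1)$ directly, so $h$ is strictly increasing on the whole relevant interval. This makes the argument much cleaner: for $\beta<\theta^*$ you get exactly one zero from $h(0)<0<h(\beta)$; for $\beta=\theta^*$ the unique zero is at $q=\theta^*$ since $h(0)<0=h(\theta^*)$; for $\beta>\theta^*$ you have $h(0)<0$ and $h(\theta^*)=-2\int_{\theta^*}^\beta f<0$, hence $h<0$ on all of $[0,\theta^*]$ by monotonicity. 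Your integral-comparison workaround (comparing $G(q_2)-G(q_1)$ with $F_\beta(q_2)-F_\beta(q_1)$) is of course also correct and is really just the integrated form of the same inequality, but once you have $h'>0$ there is no need for the detour or for the slightly awkward ``two-zeros contradiction'' in the $\beta>\theta^*$ case.
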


It will turn out that bounded and nonnegative stationary solutions of \eqref{p} play a fundamental role in describing the long-time dynamics of system \eqref{p}. Based on Lemma \ref{lem:ppa}, we shall list all possible  bounded and nonnegative stationary solutions of \eqref{p} in the following lemma; one can also refer to Figure \ref{fig:1} for the structure of ground state.
\begin{figure}[htbp]
\centering
\includegraphics[width=7cm]{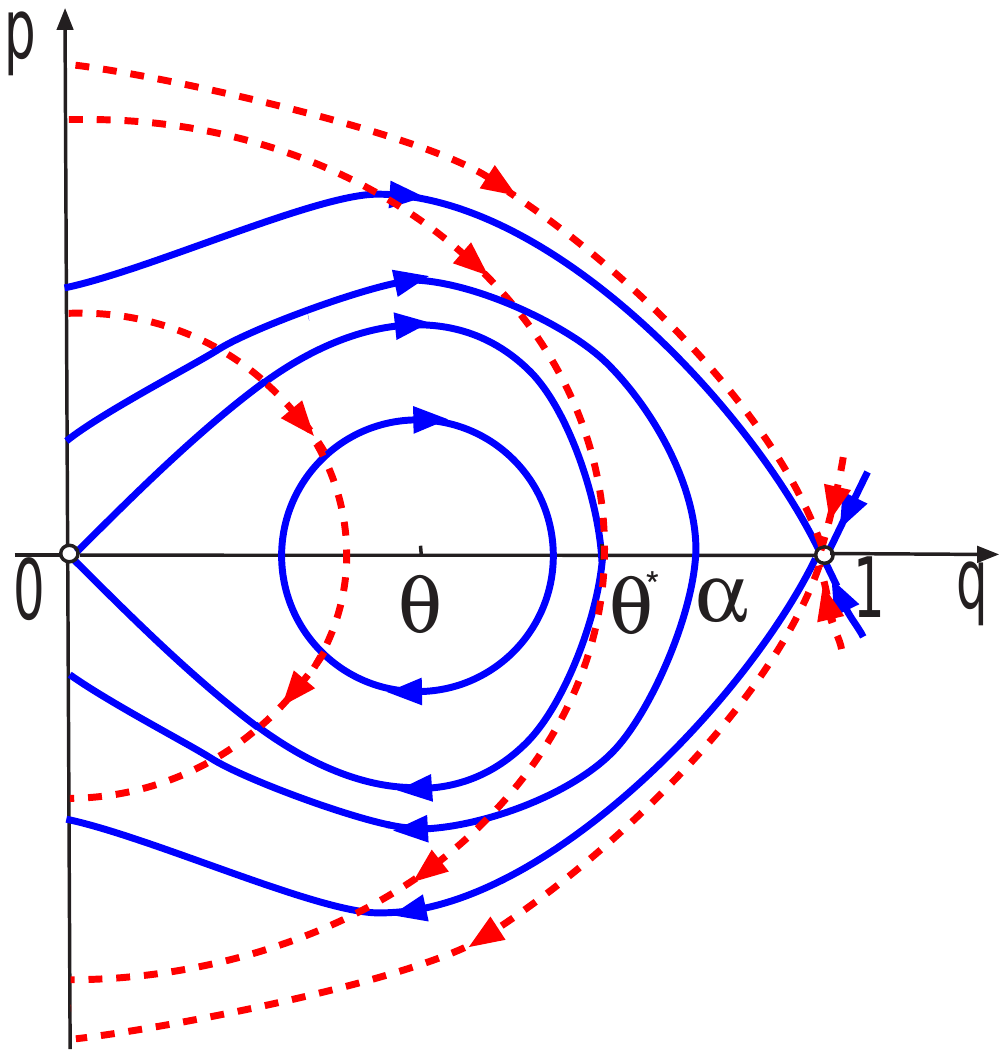}
\includegraphics[width=7cm]{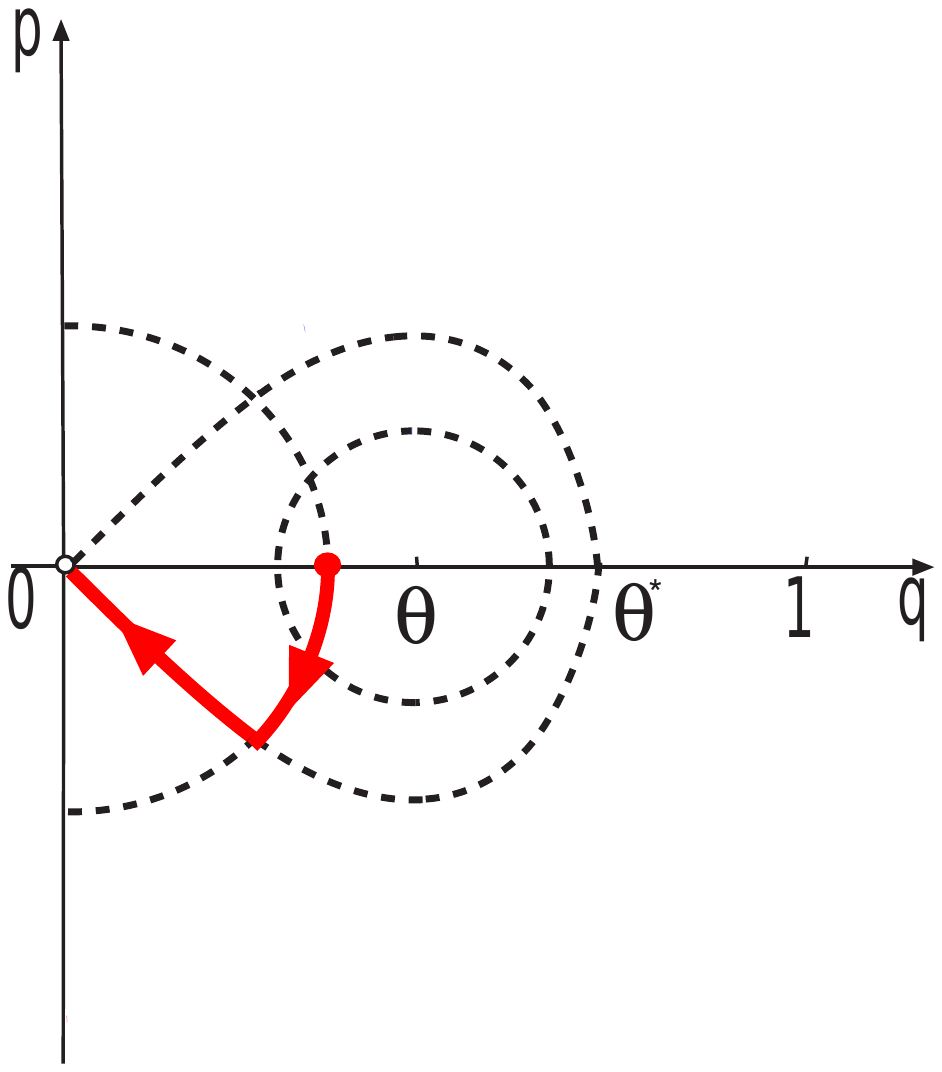}
\caption{{\small The $qp$-plane. Left: the red dotted curves are the trajectories for $q''+f(q)=0$, the blue solid curves are the trajectories for $q''+g(q)=0$; Right: the red solid curve is the trajectory for a ground state of \eqref{U}.} }\label{fig:1}
\end{figure}
\begin{lem}\label{lem:staso}
For any $L>0$, all solutions of the stationary problem \eqref{U} are one of the following types:
\begin{itemize}
\item[\vspace{10pt}(1)] \;Trivial solution: $U\equiv 0$;
\item[\qquad(2)] \;Positive constant solution: $U\equiv 1$;
\item[\qquad(3)] \;Ground state: $U$ is decreasing, $U(x)>0,\,\forall x\in[0,\infty)$, and
\[\left\{
\begin{array}{ll}
 U'' + f(U)=0, & 0<x<L,\\
 U'' + g(U)=0, &  x>L,\\
 U'(0)= 0, \\
 U(L-0)= U(L+0), \\
 U'( L-0)= U'(L+0).
\end{array}
\right.
\]
Moreover, when $x>L$, $U(\cdot)=V(\cdot-z)$ where $z\in\R$ and $V$ is the unique positive symmetrically decreasing solution of
 \[V'' + g(V)=0,\ \ \ V(0)=\theta^*,\ \ \ V(\pm\infty)=0.\]
\item[\qquad(4)] \;Positive periodic solution: $U(x)>0$ for $x\geq 0$ and when $x>L$, $U(x)=P(x-z_0)$, where $z_0\in\R$ and $P$ is a periodic solution of $P'' + g(P)=0$ satisfying  $0<\min P<\theta <\max P<\theta^*$.

\end{itemize}
\end{lem}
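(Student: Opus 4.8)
The plan is to prove Lemma~\ref{lem:staso} by a global phase-plane analysis of the gluing problem \eqref{U}. Writing $p=U'$, every solution of $U''+f(U)=0$ moves along a level curve of the $f$-energy $\tfrac12p^{2}+\int_{0}^{q}f$, and every solution of $U''+g(U)=0$ along a level curve of the $g$-energy $H_{g}(q,p):=\tfrac12p^{2}+\int_{0}^{q}g$; the $C^{1}$-matching at $x=L$ merely requires that the $f$-arc used on $(0,L)$ and the $g$-arc used on $(L,\infty)$ share a point of the $qp$-plane. First I would record the two phase portraits. By \eqref{mono}, $q=0$ is a center and $q=1$ a saddle of the $f$-flow, surrounded in $\{q\ge0\}$ by the arcs of $\Gamma_{\beta}$ ($\beta\in(0,1)$) running from $(\beta,0)$ down to the $p$-axis (the trajectories of the bumps $W$). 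By \eqref{bi}, $q=0$ and $q=1$ are saddles and $q=\theta$ a center of the $g$-flow; moreover \eqref{unbalance} (equivalently $\theta^{*}<1$ with $\int_{0}^{\theta^{*}}g=0$) places the homoclinic loop $\Gamma_{0}$ of the $g$-flow at the \emph{origin} and, crucially, forbids any homoclinic or heteroclinic connection involving $(1,0)$. Hence the only $g$-orbits lying in $\{q\ge0\}$ that stay bounded for all $x$ large and pass through some point $(\beta,p)$ with $\beta\in(0,1)$, $p\le0$, are: the equilibria $(0,0)$ and $(1,0)$; the lower branch of $\Gamma_{0}$, which is a translate of $V$ and tends to the origin as $x\to\infty$; and the periodic orbits encircling $(\theta,0)$ strictly inside $\Gamma_{0}$, each satisfying $0<\min<\theta<\max<\theta^{*}$. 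At this stage I would also cite Lemma~\ref{lem:ppa}: it shows that the descending arc of $\Gamma_{\alpha}$ can meet $\Gamma_{0}$ only for $\alpha\in(0,\theta^{*}]$, which later fixes the $g$-part of a ground state.

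Next I would use the Neumann condition to organize the analysis on $0<x<L$. Since $U'(0)=0$, the trajectory starts at $(\alpha,0)$ with $\alpha:=U(0)\ge0$. If $\alpha\in\{0,1\}$, then $f(\alpha)=0$ and uniqueness forces $U\equiv\alpha$ on $[0,L]$; since $0,1$ are also zeros of $g$, the constancy propagates to $[L,\infty)$, giving cases~(1) and~(2). If $\alpha>1$, then $f(\alpha)<0$ makes $U$ convex and increasing, and a simple comparison forces $U$ to be unbounded on $[0,\infty)$, which is impossible. If $\alpha\in(0,1)$, then $U''(0)=-f(\alpha)<0$, so $U$ decreases from $(\alpha,0)$ along $\Gamma_{\alpha}$; as $U$ must remain nonnegative it cannot reach the point $q=0$ of $\Gamma_{\alpha}$, so $U$ is \emph{strictly decreasing} on $(0,L)$ and the matching value $(U(L),U'(L))=:(\beta,p_{L})$ lies on the descending arc of $\Gamma_{\alpha}$; in particular $\beta\in(0,\alpha)$ and $p_{L}<0$.

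Then I would glue and read off the list. For $x>L$, $U$ follows the $g$-orbit through $(\beta,p_{L})$; set $h:=H_{g}(\beta,p_{L})$. If $h>0$, then on that orbit $p$ stays negative while $q$ decreases through $0$, so $U$ would become negative --- impossible; this also rules out the borderline value $\beta=0$ (for which $p_{L}<0$), while $p_{L}=0$ is excluded because then $\beta$ would be a turning point of $\Gamma_{\alpha}$, forcing $\alpha=\beta$ and $U\equiv\alpha$ on $[0,L]$, against $\alpha$ not being a zero of $f$. If $h=0$, the orbit must be $\Gamma_{0}$, and since $p_{L}<0$ it is a translate $V(\cdot-z)$ with $z<L$; combined with the strictly decreasing $f$-arc on $(0,L)$ and $U'(0)=0$, this is exactly a ground state, case~(3). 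If $h<0$, the orbit is a periodic orbit inside $\Gamma_{0}$, so $U(\cdot)=P(\cdot-z_{0})$ on $[L,\infty)$ with $0<\min P<\theta<\max P<\theta^{*}$, case~(4); and in the last two cases $U>0$ on all of $[0,\infty)$ is immediate. Thus every solution of \eqref{U} is of type~(1)--(4).

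The step I expect to be the main obstacle is the global description of the $g$-level sets in the first paragraph: one must verify that, within $\{q\ge0\}$ and among curves meeting a point with $p\le0$, nothing survives beyond the equilibria, the loop $\Gamma_{0}$, and the periodic orbits inside it --- in particular no periodic orbit lying \emph{outside} $\Gamma_{0}$ and no bounded connection attached to $(1,0)$ --- and this is precisely where $\int_{0}^{1}g>0$ (the unbalanced condition), together with hypothesis~(H) that underlies the $\Gamma_{0}$--$\Gamma_{\beta}$ intersection count in Lemma~\ref{lem:ppa}, is indispensable. Once the two phase portraits are secured, the remaining bookkeeping --- strict monotonicity of the inner arc, exclusion of the degenerate matching values $\beta=0$ and $p_{L}=0$, and strict positivity of $U$ on $[0,\infty)$ --- is routine.
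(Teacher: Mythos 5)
Your proposal is correct and follows essentially the same route as the paper, which obtains Lemma \ref{lem:staso} precisely by the phase-plane/energy analysis you carry out (trajectories of $q''+f(q)=0$ glued at $x=L$ to level sets of the $g$-energy, with the trichotomy $h>0$, $h=0$, $h<0$ at the matching point corresponding to exclusion, the ground state $V(\cdot-z)$, and the periodic tails), the paper merely sketching this via Lemma \ref{lem:ppa} and Figure \ref{fig:1} rather than writing out the details. Your writeup in fact supplies the bookkeeping the paper leaves implicit, and the one loosely stated point (excluding $U'(L)=0$) is vacuous anyway since $U'<0$ strictly along the descending arc of $\Gamma_\alpha$ for $L>0$.
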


In addition, if the stationary solution $U(x)$ is a ground state, then it is easily seen from Lemma \ref{lem:ppa} that
\begin{equation}\label{Utot}
\|U\|_{L^\infty(0,\infty)}=U(0)< \theta^*.
\end{equation}

We also observe that, for any $\alpha\in(\theta^*,1)$, the trajectory for $q''+g(q)=0$ passing through the point $(\alpha,0)$ in the phase plane gives a function $v_\alpha$ satisfying
\begin{equation}\label{vala}
v_\alpha''+g(v_\alpha)=0<v_\alpha\leq \alpha\ \ \mbox{ in } (0,2l_\alpha),
\ \ \  v_\alpha(0)=v_\alpha(2l_\alpha)=0,\ \ \ v_\alpha(l_\alpha)=\alpha.
\end{equation}
where
\begin{equation}\label{la1}
l_\alpha=\int_0^\alpha\frac{ds}{\sqrt{G(s)-G(\alpha)}}\in(0,\infty).
\end{equation}

\vskip10pt

As far as the nonnegative stationary problem associated with \eqref{q} is concerned,
in addition to the constants $0$ and $1$, a ground state $U$ is a positive solution of the following elliptic problem:
\begin{equation}\label{q-statio}
\left\{
\begin{array}{ll}
 U'' + f(U)=0, &   x \in (L_1, L_2),\\
 U'' + g(U)=0, &  x\in(0, L_1)\cup(L_2,\infty),\\
 U'(0)=0, & \\
 U( L_i-0)= U( L_i+0), & i=1, 2, \\
 U'( L_i-0)= U'( L_i+0), & i=1, 2.
\end{array}
\right.
\end{equation}
Moreover, when $x>L_2$, $U(\cdot)=V(\cdot-z_1)$, where $z_1\in\R$ and $V$ is the unique positive decreasing solution of
 \[V'' + g(V)=0,\ \ \ V(0)=\theta^*,\ \ \ V(\pm\infty)=0.\]
 When $x\in[0,L_1]$, $U(\cdot)=P(\cdot-z_2)$ where $z_2\in\R$ and $P$ is a periodic solution of
$P'' + g(P)=0$ satisfying  $0<\min P<\theta <\max P<\theta^*$.

It is observed that \eqref{q-statio} may have eight types of ground state; we refer the reader to Figure \ref{fig:2}.
Moreover, different from \eqref{U}, a ground state of \eqref{q-statio} may not be decreasing with respect to $x\in[0,\infty)$. In spite of this, it can be shown that any ground state $U$ of \eqref{q-statio} satisfies that
 \begin{equation}\label{Utot-a}
\|U\|_{L^\infty(0,\infty)}< \theta^*.
\end{equation}

\begin{figure}[htbp]
\centering
\includegraphics[width=3.9cm]{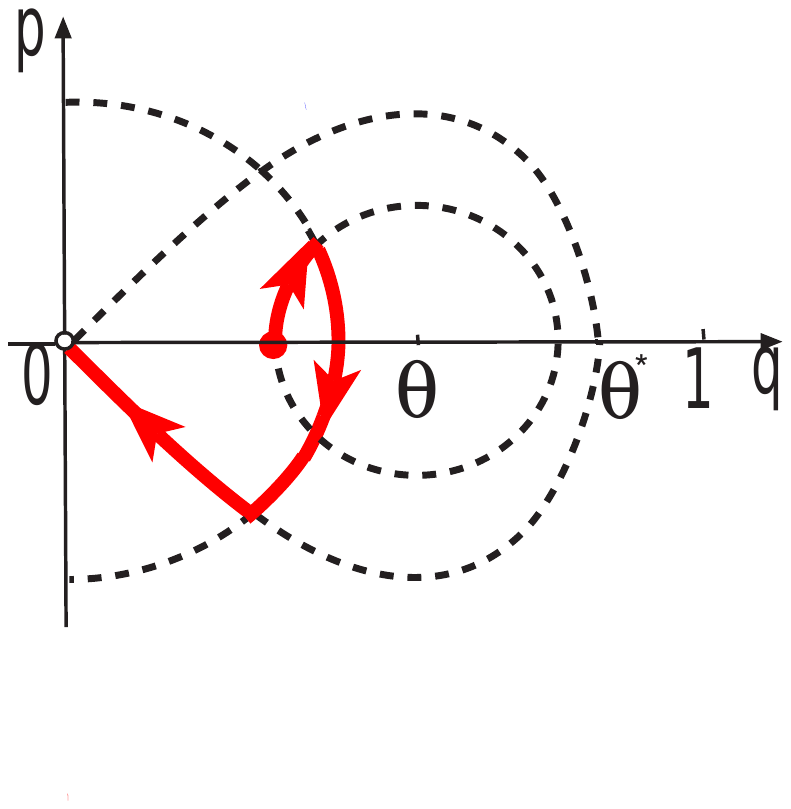}
\includegraphics[width=3.9cm]{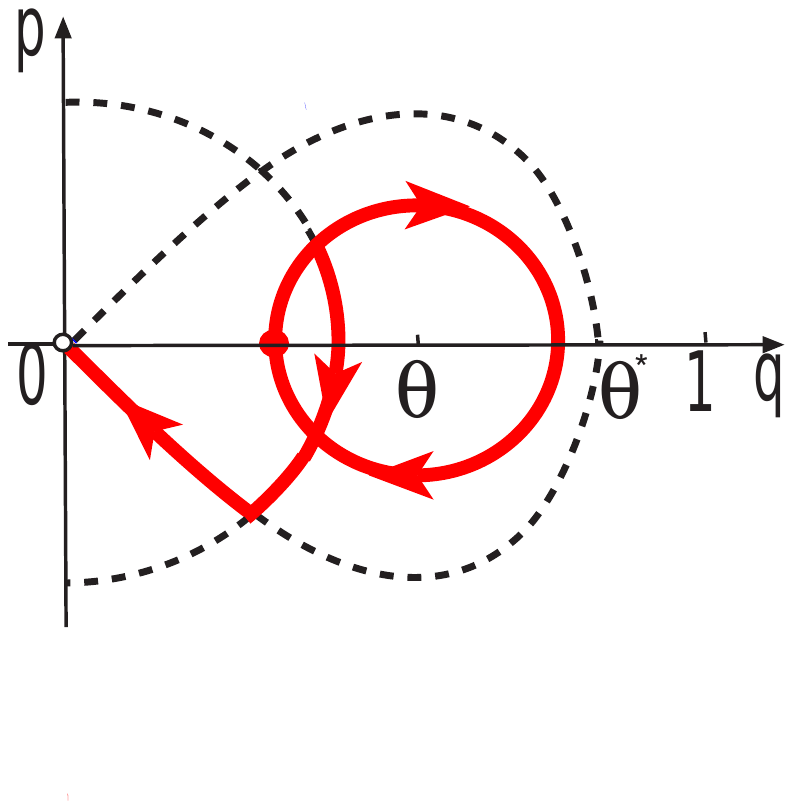}
\includegraphics[width=3.9cm]{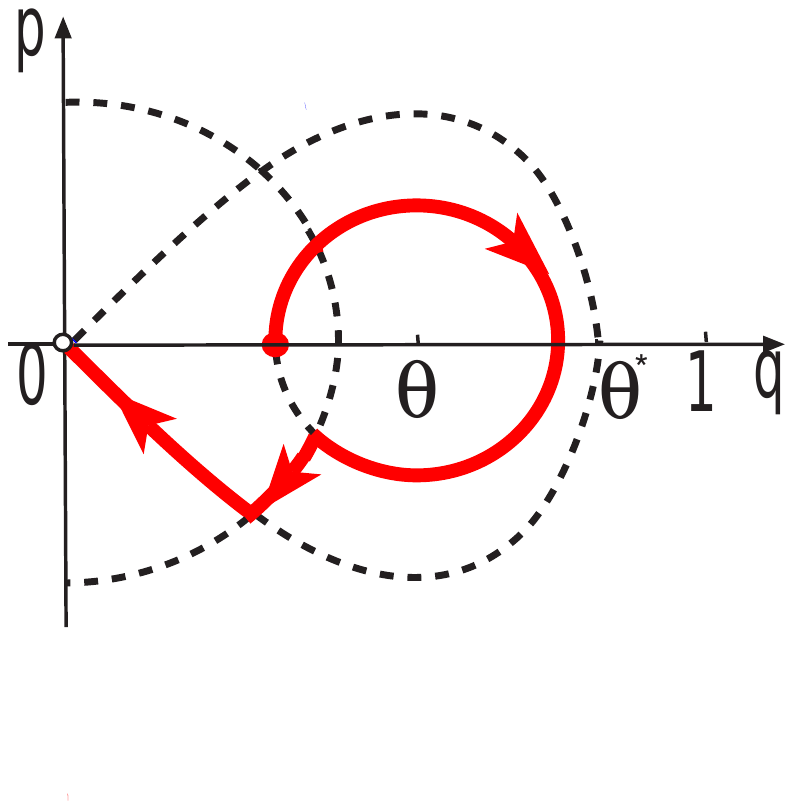}
\includegraphics[width=3.9cm]{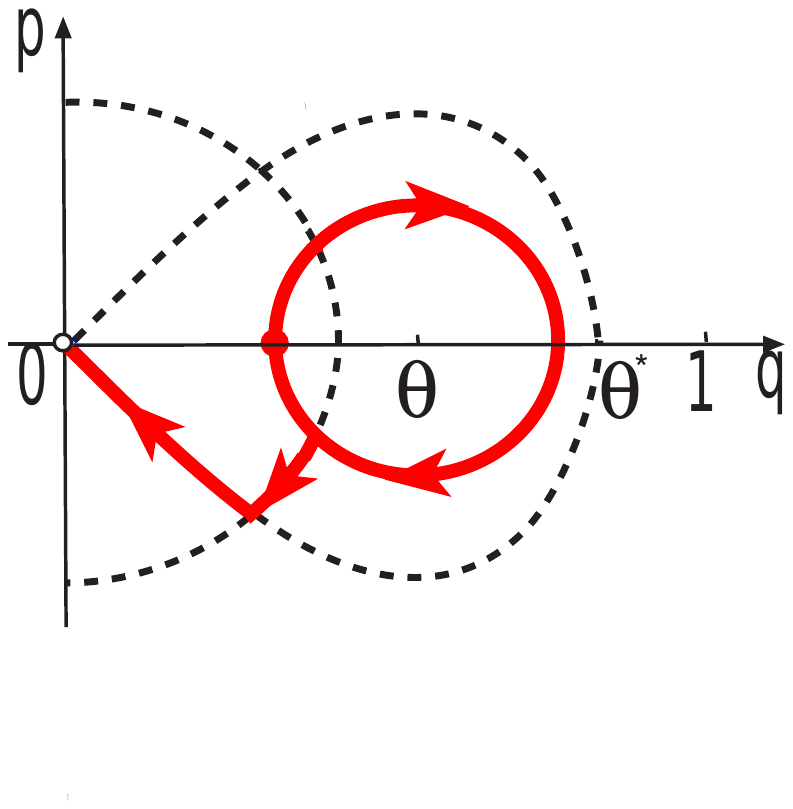}
\includegraphics[width=3.9cm]{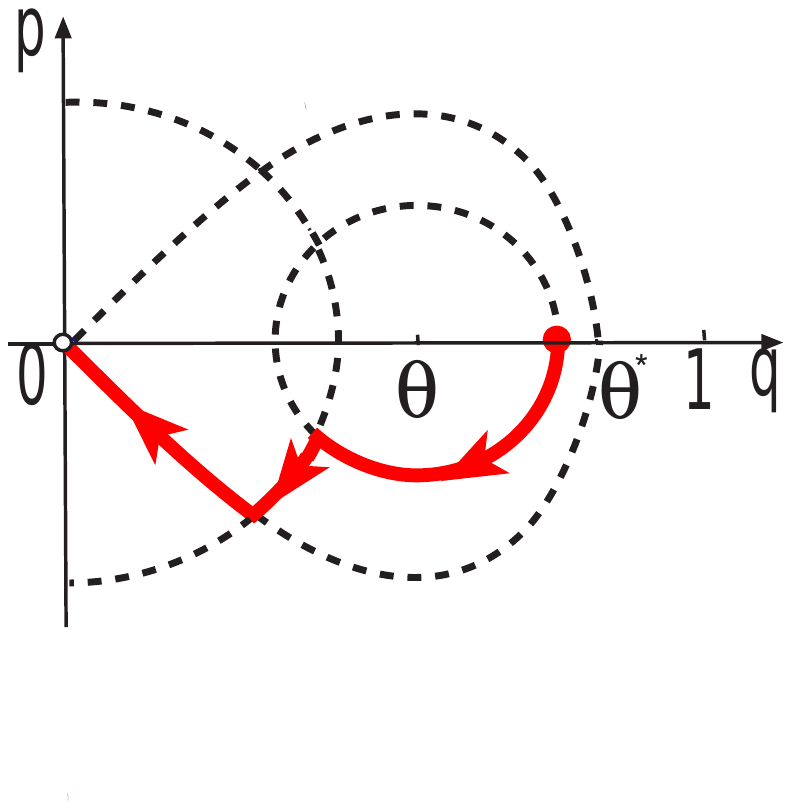}
\includegraphics[width=3.9cm]{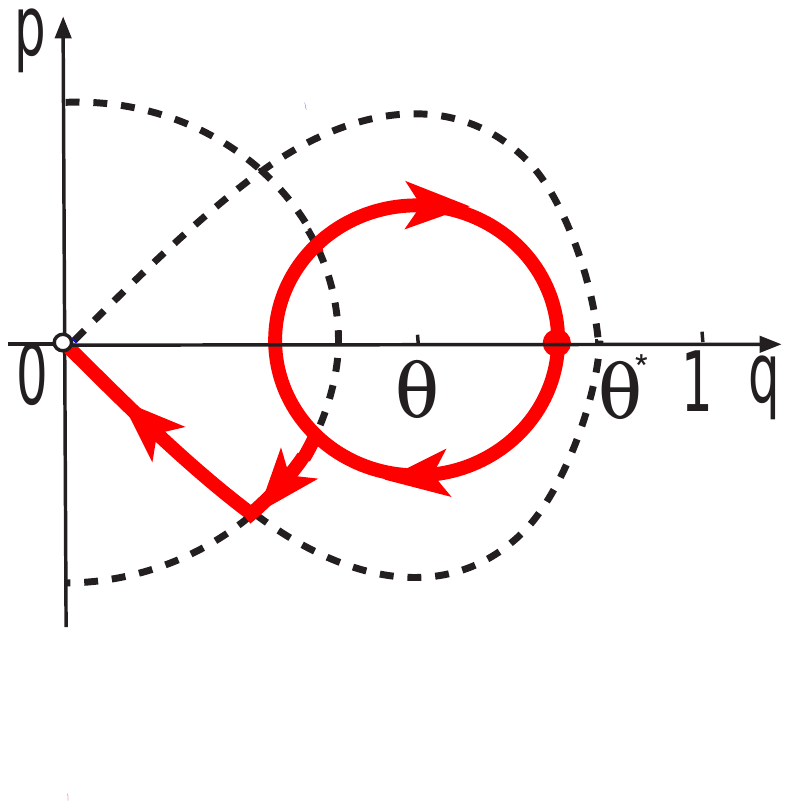}
\includegraphics[width=3.9cm]{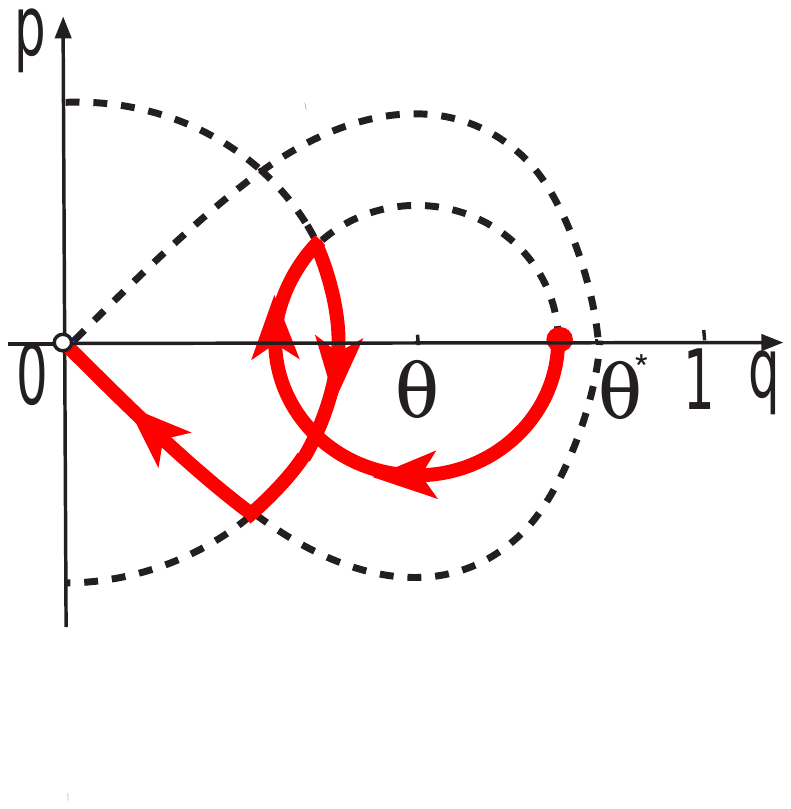}
\includegraphics[width=3.9cm]{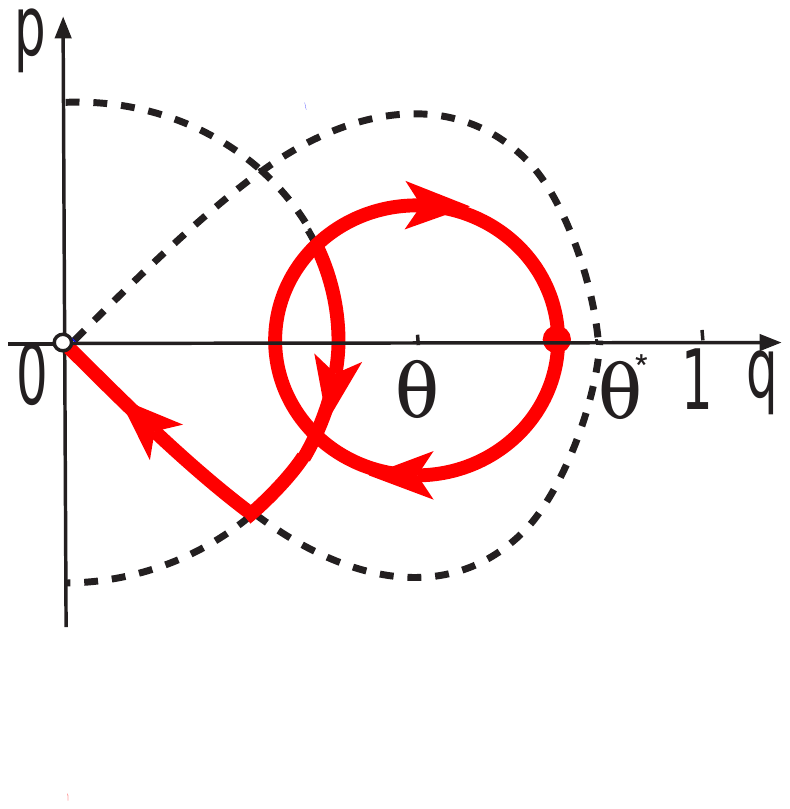}
\caption{{\small Eight types of ground state for system \eqref{q-statio}. } }\label{fig:2}
\end{figure}

\subsection{A general convergence theorem}\label{sub:zeronumber}

By the similar analysis as in \cite{DM}, we can present a general convergence result, which reads as follows.

\begin{thm}\label{thm:convergence}{\rm(Convergence theorem for systems \eqref{p} and \eqref{q})}
Let $u(t,x)$ be a solution of \eqref{p} (or \eqref{q}) with $u_0\in X^+$. Then $u$ converges to a stationary
solution $U$ as $t\to \infty$ locally uniformly in $[0,\infty)$. Moreover, the limit $U$ is either a constant solution or a
ground state of \eqref{U} (or \eqref{q-statio}).
\end{thm}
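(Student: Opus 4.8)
The plan is to mimic the argument of Du--Matano \cite{DM} for the bistable equation on the line, adapting it to the present transmission problem; the two new ingredients to accommodate are the interface at $x=L$ (resp.\ at $x=L_1,L_2$) and the Neumann condition at $x=0$. I carry this out for \eqref{p}, the case of \eqref{q} being identical. \emph{Step 1 (a priori estimates and the $\omega$-limit set).} Since $f,g$ are globally Lipschitz and vanish at $0$, one has $f(s),g(s)\le Ks$ for $s\ge 0$ with $K:=\mathrm{Lip}(f)+\mathrm{Lip}(g)$, so $v:=e^{-Kt}u$ is a subsolution of the heat equation on $[0,\infty)$ with homogeneous Neumann condition at $x=0$; comparing $v$ with the Neumann heat semigroup applied to the compactly supported datum $u_0$ gives $0\le u\le C_0$ and, for each fixed $t>0$, a Gaussian decay $u(t,x)\le C(t)e^{-cx^2/t}$ as $x\to\infty$. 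A further comparison with the ODE $\dot w=\max\{f(w),g(w)\}$ yields $\limsup_{t\to\infty}\sup_{x\ge0}u(t,x)\le1$. Interior parabolic estimates on $[0,\infty)\setminus\{L\}$, together with the $C^1$ matching across $x=L$, make the orbit $\{u(t,\cdot):t\ge1\}$ precompact in $C^{1}_{\mathrm{loc}}([0,\infty))$; hence $\omega(u_0)$ in this topology is nonempty, compact, connected and invariant, and each $\psi\in\omega(u_0)$ generates a bounded entire solution of \eqref{p} with values in $[0,1]$.

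\emph{Step 2 (zero-number principle and equilibria).} The engine is Angenent's intersection-number theory. If $\phi$ is a bounded stationary solution of \eqref{U}, then $w:=u-\phi$ solves, on $((0,\infty)\setminus\{L\})\times(0,\infty)$, a linear equation $w_t=w_{xx}+c(t,x)\,w$ with $\|c\|_{L^\infty}\le K$, with $w_x(t,0)=0$, and with $w$ of class $C^1$ across $x=L$. Reflecting $w$ evenly at $x=0$, and noting that a merely bounded (possibly discontinuous) zeroth-order coefficient with unchanged principal part $\partial_{xx}$ does not spoil the zero-number lemma for a $C^1$ solution, one gets: for every $R>L$ the number $\mathcal Z_{(0,R)}(w(t,\cdot))$ of zeros is finite for $t>0$, non-increasing in $t$, and strictly decreasing at any $t_0$ at which $w(t_0,\cdot)$ has a multiple zero in $(0,R)$ or a zero reaching $x=L$; the same applies on $(L,\infty)$ to $u-\theta$ (viewing $x=L$ as a boundary where $u(t,L)-\theta$ is prescribed; here only $g(\theta)=0$ is used). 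Applying this to the differences $u(\cdot+h,\cdot)-u$ and letting $h\downarrow0$, in the classical Matano/Brunovsk\'y--Fiedler fashion, and combining with the monotonicity of $E_R[u(t,\cdot)]:=\int_0^R(\tfrac12 u_x^2-H(u))\,dx$ (with $H'=f$ on $(0,L)$, $H'=g$ on $(L,R)$, whose boundary terms at $x=0$ and $x=L$ vanish or cancel) and with the tail decay of Step 1, one obtains $u_t(t,\cdot)\to0$ locally uniformly, so every $\psi\in\omega(u_0)$ is a bounded nonnegative stationary solution: by Lemma \ref{lem:staso}, $\psi\equiv0$, $\psi\equiv1$, a ground state of \eqref{U}, or a positive periodic-type solution. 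The last is excluded, for such $\psi$ equals $P(\cdot-z_0)$ on $(L,\infty)$ with $0<\min P<\theta<\max P$, hence crosses the level $\theta$ infinitely often on $(L,\infty)$; by lower semicontinuity of the zero number under $C^1_{\mathrm{loc}}$-convergence, $\mathcal Z_{(L,R)}(u(t_n,\cdot)-\theta)\to\infty$ as $R\to\infty$ along any $t_n\to\infty$ realising $\psi$, contradicting the boundedness of $t\mapsto\mathcal Z_{(L,\infty)}(u(t,\cdot)-\theta)$ from Step 2 (finite at $t=1$, where $u(1,x)<\theta$ for large $x$, and non-increasing afterwards). Thus $\omega(u_0)$ is a connected subset of $\{0\}\cup\{1\}\cup\mathcal G$, where $\mathcal G$ denotes the set of ground states of \eqref{U}.

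\emph{Step 3 (reduction to a single limit).} Each ground state $U$ satisfies $U(0)\in(\theta,\theta^*)$ --- it must be decreasing at $x=0^+$, whence $f(U(0))\ge0$, and $\|U\|_{L^\infty}<\theta^*$ by \eqref{Utot} --- and, by the phase-plane analysis of Section \ref{sec:basic}, for fixed $L$ the admissible values of $U(0)$ form a compact subset of $(\theta,\theta^*)$; hence $\mathcal G$ is compact and bounded away, in $C^1_{\mathrm{loc}}([0,\infty))$, from the isolated equilibria $0$ and $1$. Therefore $\{0\}$, $\{1\}$ and $\mathcal G$ are relatively clopen in $\{0\}\cup\{1\}\cup\mathcal G$, and the connected set $\omega(u_0)$ lies in exactly one of them. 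If $\omega(u_0)\subseteq\{0\}$ or $\{1\}$ we are done; if $\omega(u_0)\subseteq\mathcal G$, the elements of $\mathcal G$ have a uniform exponential tail, so $E[u(t,\cdot)]:=\int_0^\infty(\tfrac12 u_x^2-H(u))\,dx$ is finite for every $t$, non-increasing, and bounded below (decreasing to $\min_{\omega(u_0)}E$), and the standard gradient-system argument --- exactly as in \cite{DM} --- forces $\omega(u_0)$ to be a single ground state. In all cases $u(t,\cdot)\to U$ locally uniformly in $[0,\infty)$ with $U$ a constant solution or a ground state. For \eqref{q} the argument is word-for-word the same, with \eqref{U} replaced by \eqref{q-statio}, \eqref{Utot} by \eqref{Utot-a}, and the ground states as in Figure \ref{fig:2}; the only new feature, that a ground state of \eqref{q-statio} need not be monotone, affects nothing, since monotonicity was used above only to locate $U(0)$ and the bound $\|U\|_{L^\infty}<\theta^*$ is still available from \eqref{Utot-a}.

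\emph{Main obstacle.} The two delicate points are (a) verifying that Angenent's zero-number lemma survives the jump of the reaction term at $x=L$: this hinges on the $C^1$ transmission condition, which keeps the linearised zeroth-order coefficient bounded while leaving the principal part equal to $\partial_{xx}$; and (b) passing from ``$u_t\to0$'' to ``$\omega$-limits are equilibria'' and then selecting a single limiting ground state, both on the unbounded domain $[0,\infty)$ --- here the compact support of $u_0$, through the Gaussian tail of Step 1, is what makes the localized and global energies usable and what rules out the periodic stationary solutions. Both are carried out as in \cite{DM}.
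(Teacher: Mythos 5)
Your overall strategy---adapting the Du--Matano machinery to the transmission problem, with the key observation that the $C^1$ matching conditions at the interface leave the intersection-number arguments intact---is exactly the route the paper takes, and, like the paper, you ultimately defer to \cite{DM} for the hard part. But two of the concrete steps you substitute for the zero-number analysis of \cite{DM} do not hold as written. First, the energy argument: the truncated energy $E_R$ is \emph{not} monotone, since
\[
\frac{d}{dt}E_R[u(t,\cdot)]=-\int_0^R u_t^2\,dx+u_x(t,R)\,u_t(t,R),
\]
and the boundary term at $x=R$ has no sign (only the terms at $x=0$ and $x=L$ vanish or cancel, as you say); and for the full energy $E$ on $[0,\infty)$ you need decay of $u(t,\cdot)$ and $u_x(t,\cdot)$ that is uniform in $t$, whereas your Step 1 bound (essentially \eqref{dequ1}) degenerates as $t\to\infty$. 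So neither ``$u_t\to0$'' nor the bounded-below/gradient-system selection of a single ground state is actually justified by what you wrote; this is precisely the part for which the paper relies on the pure zero-number argument of \cite[Theorem 1.1]{DM}, which yields quasiconvergence and uniqueness of the $\omega$-limit without any energy functional and is what makes the unbounded domain tractable. Second, your claim that every ground state satisfies $U(0)\in(\theta,\theta^*)$ is unsupported: since $f>0$ on $(0,1)$, the inequality $f(U(0))\ge 0$ is automatic and gives no lower bound on $U(0)$. Hence the compactness of $\mathcal G$ and its separation from $0$, on which your clopen decomposition in Step 3 rests, needs a genuinely different justification (for fixed $L$ it can be extracted from a phase-plane travel-time analysis), and is in any case avoidable if one follows \cite{DM}.

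A smaller but real soft spot: on $(L,\infty)$ the zero number of $u-\theta$ need not be non-increasing, because zeros can enter through the endpoint $x=L$ whenever $u(t,L)-\theta$ changes sign, and you have no bound on the number of such sign changes. The exclusion of the periodic-tail stationary solutions should instead be run with differences $u-\phi$ where $\phi$ solves the full transmission problem \eqref{U}---this is exactly the point the paper emphasizes: such differences satisfy a linear parabolic equation across $x=L$, and the strong maximum principle plus the Hopf lemma show the interface creates no new sign changes---or with translates of $V$ as in \cite{DM}. With these repairs (i.e., running the argument of \cite{DM} throughout rather than the energy shortcut), your outline coincides with the paper's proof.
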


\begin{proof}\ We only sketch the proof for system \eqref{p}; the analysis for system \eqref{q} is similar.

Denote by $\omega(u)$ the $\omega$-limit set of $u(t,\cdot)$ in the topology
 of $L^{\infty}_{loc}([0,\infty))$. By local parabolic estimates, the definition of $\omega(u)$
 remains unchanged if the topology of  $L^{\infty}_{loc}([0,\infty))$ is replaced by that of $C^2_{loc}([0,L)\cup(L,\infty))\cap C^1_{loc}([0,\infty))$. It is well-known that $\omega(u)$ is a compact, connected and invariant set.

Following the argument of \cite[Theorem 1.1]{DM} with slight modifications, we can show that
$\omega(u)$ consists of only one element, which is either a constant solution or a decreasing solution of \eqref{U}.
In view of Lemma \ref{lem:staso}, $\omega(u)$ contains either $0$, or $1$, or a ground state of \eqref{U}.
We omit all the details but want to point out one thing in the proof: the condition in \eqref{p} at the connection point $x=L$ does not change the number of sign changes of the functions defined similarly as in \cite[Lemmas 2.7, 2.9]{DM}, due to the classical strong maximum principle and Hopf boundary lemma for parabolic equations.

\end{proof}

\section{A Connected Protection Zone: Proof of Theorem \ref{thm:dybe}}\label{sec:spanv}

This section is devoted to the proof of Theorem \ref{thm:dybe}. To achieve the aim,
we need to establish a series of lemmas in the coming three subsections.

\subsection{An eigenvalue problem}\label{subs:va} We first study the following eigenvalue problem:
 \begin{equation}\label{eigen-p}
 \left\{
 \begin{array}{ll}
 - \varphi'' -f'(0)\varphi =\lambda \varphi, & 0<x<L,\\
 - \varphi'' -g'(0)\varphi =\lambda \varphi, & x>L,\\
 \varphi'(0) = \varphi(\infty)=0, &  \\
 \varphi(L-0) = \varphi( L+0),\\
 \varphi'(L-0) = \varphi'( L+0),
 \end{array}
 \right.
 \end{equation}
and analyze the properties of its principal eigenvalue. It will turn out that these preliminary results are crucial
in determining the dynamics of \eqref{p}.

For our purpose, let us consider the following eigenvalue problem on $\R$ associated with \eqref{eigen-p}:
 \begin{equation}\label{eigen-p1}
 \left\{
 \begin{array}{ll}
 - \varphi'' +h(x)\varphi =\lambda \varphi, & -\infty<x<\infty,\\
 \varphi(-\infty) = \varphi(\infty)=0,
 \end{array}
 \right.
 \end{equation}
where
 $$
h(x) = \left\{
\begin{array}{ll}
     -f'(0),\ \ & x\in[-L,L],\\
      -g'(0), \ \ & x\in\R\setminus{[-L,L]}.
\end{array}
\right.
$$

As $h\in L^\infty(\R)$ and is symmetric with respect to the origin,
it is well known that the principal eigenvalue (or the so-called first eigenvalue) of \eqref{eigen-p1} exists and
coincides with that of problem \eqref{eigen-p}. Thus, we use $\lambda_1 (L)$ to denote the principal eigenvalue of
\eqref{eigen-p} and \eqref{eigen-p1}. The corresponding eigenfunction $\varphi_1^L$ of \eqref{eigen-p} satisfies
$\varphi_1^L\in C^1([0,\infty))\cap(C^2([0,\infty)\setminus\{L\}))$, $\varphi_1^L>0$ on $[0,\infty)$ and $(\varphi_1^L)'(0)=0$.

Let $\lambda_1^R(L)$ be the principal eigenvalue of
 \begin{equation}\label{eigen-p2}
 \left\{
 \begin{array}{ll}
 - \varphi'' +h(x)\varphi =\lambda \varphi, & -R<x<R,\\
 \varphi(-R) = \varphi(R)=0.
 \end{array}
 \right.
 \end{equation}
From \cite[Proposition 6.11]{BHR} (or \cite[Theorem 4.1]{BR}) it follows that
 \begin{equation}\label{eig-1}
 \mbox{$\lambda_1^R(L)$\ is decreasing in\ $R>0$}\ \, \mbox{and}\ \,\lim_{R\to\infty}\lambda_1^R(L)\leq\lambda_1 (L).
 \end{equation}

Let $L_*$ be given as in Section 1. Then we are able to conclude that

\begin{lem}\label{lem:1eigenvalue} Let $\lambda_1 (L)$ be the principal eigenvalue of \eqref{eigen-p}. Then
\[
\lambda_1 (L)\in(-f'(0),-g'(0)),\ \ \mbox{ for any}\ L>0,
\]
and
\[
L=\frac{1}{\sqrt{f'(0)+\lambda_1(L)}}\arctan\sqrt{-\frac{g'(0)+\lambda_1(L)}{f'(0)+\lambda_1(L)}}.
\]
Moreover, $\lambda_1(L)$ is decreasing with respect to $L>0$, and $\lambda_1(L)<0$ if $L>L_*$, $\lambda_1(L)=0$ if $L=L_*$, and $\lambda_1(L)>0$ if $0<L<L_*$.

\end{lem}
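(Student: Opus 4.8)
The plan is to reduce everything to an explicit analysis of the principal eigenvalue via the phase/shooting description of the eigenfunction. Since $h$ is piecewise constant and symmetric, on $(0,L)$ the equation $-\varphi'' - f'(0)\varphi = \lambda\varphi$ together with $\varphi'(0)=0$ forces $\varphi(x) = A\cos(\sqrt{f'(0)+\lambda}\,x)$ (when $f'(0)+\lambda>0$), while on $(L,\infty)$ the equation $-\varphi'' - g'(0)\varphi = \lambda\varphi$ together with $\varphi(\infty)=0$ and positivity forces exponential decay $\varphi(x) = B e^{-\sqrt{-(g'(0)+\lambda)}\,(x-L)}$, which requires $g'(0)+\lambda<0$. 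First I would argue the location of $\lambda_1(L)$: by the variational characterization $\lambda_1(L) = \inf_{\varphi} \big(\int |\varphi'|^2 + \int h\varphi^2\big)/\int \varphi^2 \ge \inf h = -f'(0)$, with strict inequality because the Rayleigh quotient cannot equal $-f'(0)$ (an eigenfunction would have to solve $-\varphi''=0$ on all of $\R$ with $\varphi\in L^2$, impossible); and $\lambda_1(L) < -g'(0)$ because $h \le -g'(0)$ with strict inequality on a set of positive measure, so testing against (an approximation of) the eigenfunction for the constant potential $-g'(0)$ — or more simply using that $\lambda_1(L)$ for a bounded potential with a genuine "dip" below $-g'(0)$ on $[-L,L]$ lies strictly below $-g'(0)$, since the bottom of the essential spectrum of $-\frac{d^2}{dx^2}-g'(0)$ is $-g'(0)$ and the dip produces a bound state. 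This is the step I expect to need the most care: justifying $\lambda_1(L) < -g'(0)$ rigorously (existence of a genuine principal eigenvalue below the essential spectrum), though it can be done cleanly with a compactly supported trial function that beats $-g'(0)$ in the Rayleigh quotient, or by invoking \eqref{eig-1} together with the known value of $\lambda_1^R$ for the truncated problem.

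With $\lambda := \lambda_1(L) \in (-f'(0), -g'(0))$ in hand, set $a = \sqrt{f'(0)+\lambda} > 0$ and $b = \sqrt{-(g'(0)+\lambda)} > 0$. Matching $\varphi$ and $\varphi'$ at $x=L$ gives, from $A\cos(aL) = B$ and $-aA\sin(aL) = -bB$, the relation $a\tan(aL) = b$, i.e.
\[
aL = \arctan\!\Big(\frac{b}{a}\Big), \qquad\text{equivalently}\qquad L = \frac{1}{\sqrt{f'(0)+\lambda_1(L)}}\arctan\sqrt{-\frac{g'(0)+\lambda_1(L)}{f'(0)+\lambda_1(L)}},
\]
which is the displayed identity. (One should note $aL \in (0,\pi/2)$ so that $\varphi>0$ on $[0,L]$, consistent with $\varphi_1^L$ being the principal eigenfunction; any larger root would give sign changes and hence a higher eigenvalue.)

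Next I would prove monotonicity of $\lambda_1(L)$ in $L$. The cleanest route is domain monotonicity: as $L$ increases the potential $h$ decreases pointwise (the value $-f'(0) < -g'(0)$ occupies a larger interval), so by the Rayleigh quotient $\lambda_1(L)$ is nonincreasing in $L$; strict monotonicity follows because on any interval of $L$-values the pointwise decrease of $h$ is strict on a set of positive measure, and the principal eigenfunction is strictly positive, so the infimum strictly drops. Alternatively, differentiate the implicit relation $F(L,\lambda) := \sqrt{f'(0)+\lambda}\,L - \arctan\sqrt{-(g'(0)+\lambda)/(f'(0)+\lambda)} = 0$: one checks $\partial_L F > 0$ and $\partial_\lambda F > 0$ on the relevant range (the $\arctan$ term is increasing in $\lambda$ since its argument increases as $\lambda \downarrow$... careful with signs — I would verify $\partial_\lambda F>0$ directly), whence $\lambda_1(L)$ is strictly decreasing. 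Finally, the sign statement is immediate from the identity: plugging $\lambda = 0$ gives $L = \frac{1}{\sqrt{f'(0)}}\arctan\sqrt{-g'(0)/f'(0)} = L_*$, so $\lambda_1(L_*) = 0$; by strict monotonicity $\lambda_1(L) > 0$ for $0 < L < L_*$ and $\lambda_1(L) < 0$ for $L > L_*$. The main obstacle, as noted, is the rigorous placement $\lambda_1(L) < -g'(0)$ (i.e. the existence of a true principal eigenvalue, not merely the bottom of the spectrum) on the whole line; everything else is the elementary trigonometric matching computation plus a standard Rayleigh-quotient monotonicity argument.
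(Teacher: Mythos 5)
Your argument is correct, but the route to the key bracketing $\lambda_1(L)\in(-f'(0),-g'(0))$ is genuinely different from the paper's. The paper proceeds by elementary ODE analysis of the eigenfunction itself: in four separate steps it rules out $\lambda_1=-g'(0)$, $\lambda_1>-g'(0)$, $\lambda_1=-f'(0)$ and $\lambda_1<-f'(0)$ by writing down the explicit linear solutions on $[0,L]$ and $(L,\infty)$ and deriving contradictions with $\varphi'(0)=0$, $\varphi(\infty)=0$, positivity, or the derivative matching at $x=L$. You instead use the variational characterization: the lower bound from $\lambda_1\ge\inf h=-f'(0)$ (with strictness because equality would force $\varphi'\equiv 0$ and $\varphi$ supported in $[-L,L]$ --- your phrasing of this point should be tightened, but it is easily repaired), and the upper bound from the standard one-dimensional fact that a potential well below the essential spectrum level $-g'(0)$ produces a bound state, proved with a wide-plateau trial function whose gradient cost $O(1/R)$ is beaten by the fixed negative well contribution. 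Your approach has the merit of actually establishing existence of a genuine principal eigenvalue with decaying eigenfunction (which the paper only cites as ``well known''), at the price of invoking spectral-theoretic machinery the paper avoids; the paper's four-step shooting argument is longer but entirely self-contained. From that point on the two proofs coincide: the cosine/exponential matching at $x=L$ with $\sqrt{f'(0)+\lambda_1}\,L\in(0,\pi/2)$ gives the displayed identity, and your monotonicity argument (testing the eigenfunction of one problem in the Rayleigh quotient of the other, or checking $\partial_\lambda F>0$, $\partial_L F>0$ in the implicit relation --- the signs do work out as you suspected) supplies an explicit justification of what the paper dismisses as ``obvious'', after which evaluating the identity at $\lambda=0$ and using strict monotonicity (equivalently, injectivity of the map $\lambda\mapsto L$) yields $\lambda_1(L_*)=0$ and the stated sign trichotomy exactly as in the paper.
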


\begin{proof} For simplicity, we write $\lambda_1=\lambda_1(L)$, and $\varphi(x)$ is denoted to be a corresponding eigenfunction.

First of all, we claim that $\lambda_1\in(-f'(0),-g'(0))$ for any $0<L<\infty$. We prove this claim in four steps as follows.

{\it Step 1: $\lambda_1\not=-g'(0)$.} If there is $L_0\in(0,\infty)$ such that $\lambda_1=-g'(0)$,
then for $x>L_0$, it follows from the second equation of \eqref{eigen-p} that
\[
-\varphi''=(g'(0)+\lambda_1)\varphi=0,
\]
which implies that there is a constant $C$ such that $\varphi'(x)\equiv C$ for $x>L_0$.
This, together with $\varphi(\infty)=0$, shows that $\varphi(x)\equiv 0$, a contradiction.

{\it Step 2: $\lambda_1<-g'(0)$.} Suppose that $\lambda_1>-g'(0)$ for $L_0\in(0,\infty)$.
For $x> L_0$, it holds
\[
-\varphi''=(g'(0)+\lambda_1)\varphi\ \ \mbox{and}\ \ g'(0)+\lambda_1>0.
\]
This, together with $\varphi(x)>0$ for $x\geq0$, yields that
$\varphi''<0$ in $(L_0,\infty)$. Therefore, $\varphi'(x)$ is decreasing in $x>L_0$. If $\varphi'(\infty)\geq 0$,
then $\varphi$ is increasing in $x> L_0$. As $\varphi>0$ on $[L_0,\infty)$, we arrive at a contradiction with $\varphi(\infty)=0$.
If $\varphi'(\infty)<0$, then it is easily shown that there exists a large $x_0>0$
such that $\varphi(x)<0$ for $x\in[x_0,\infty)$, which again leads to a contradiction.

{\it Step 3: $\lambda_1\not=-f'(0)$.} If there is $L_0\in(0,\infty)$ such that $\lambda_1=-f'(0)$. Then for $0\leq x\leq L_0$, it follows from the first
equation of \eqref{eigen-p} that
\[
-\varphi''=(f'(0)+\lambda_1)\varphi=0.
\]
This, together with $\varphi'(0)=0$, implies that
\begin{equation}\label{ee0}
\varphi'(x)\equiv 0,\ \ \ \forall x\in[0,L_0].
\end{equation}

For $x> L_0$, it follows from the second equation of \eqref{eigen-p} that
\[
-\varphi''=(g'(0)+\lambda_1)\varphi=(g'(0)-f'(0))\varphi,
\]
which is a second-order linear ordinary differential equation.
Since $g'(0)-f'(0)<0$, one can find two constants $C_1$ and $C_2$ such that
\[
\varphi(x)=C_1e^{\sqrt{f'(0)-g'(0)}\,x}+C_2e^{-\sqrt{f'(0)-g'(0)}\,x} \ \ \mbox{ for } x>L_0.
\]
Note that $\varphi(\infty)=0$. We have $C_2>0=C_1$. In turn,
$\varphi(x)=C_2e^{-\sqrt{f'(0)-g'(0)}\,x}$ for
$x>L_0$. Hence, we obtain
\[
\varphi'(L_0+0)=-C_2\sqrt{f'(0)-g'(0)}e^{-\sqrt{f'(0)-g'(0)}\,L_0}<0=\varphi'(L_0-0),
\]
due to \eqref{ee0}. This leads to a contradiction with the condition $\varphi'(L_0-0)=\varphi'(L_0+0)$.

{\it Step 4: $\lambda_1>-f'(0)$.} If there is $L_0\in(0,\infty)$ such that $\lambda_1<-f'(0)$. Then for $0\leq x\leq L_0$, it follows from the first
equation of \eqref{eigen-p} that
\[
-\varphi''=(f'(0)+\lambda_1)\varphi\ \ \mbox{with}\ f'(0)+\lambda_1<0.
\]
Thus there are two constants $C_3$ and $C_4$ such that
\[
\varphi(x)=C_3e^{\sqrt{-(f'(0)+\lambda_1)}\,x}+C_4e^{-\sqrt{-(f'(0)+\lambda_1)}\,x} \ \ \mbox{ for } x\in[0,L_0].
\]
It is easy to compute that
\[
\varphi'(0)=\sqrt{-(f'(0)+\lambda_1)}(C_3-C_4).
\]
In view of $\varphi'(0)=0$, $C_3=C_4>0$, and so
\[
\varphi(x)=C_3\big[e^{\sqrt{-(f'(0)+\lambda_1)}\,x}+e^{-\sqrt{-(f'(0)+\lambda_1)}\,x}\big] \ \ \mbox{ for } x\in[0,L_0].
\]
This infers that
\begin{equation}\label{dde0}
\varphi'(L_0-0)=C_3\sqrt{-(f'(0)+\lambda_1)}\big(e^{\sqrt{-(f'(0)+\lambda_1)}L_0}-e^{-\sqrt{-(f'(0)+\lambda_1)}L_0}\big)>0.
\end{equation}

On the other hand, when $x> L_0$, it follows from the second equation of \eqref{eigen-p} and the claims proved in Steps 1 and 2 that
\[
-\varphi''=(g'(0)+\lambda_1)\varphi\ \ \mbox{with}\ g'(0)+\lambda_1<0.
\]
Then we can find two constants $C_5$ and $C_6$  such that
\[
\varphi(x)=C_5e^{\sqrt{-(g'(0)+\lambda_1)}\,x}+C_6e^{-\sqrt{-(g'(0)+\lambda_1)}\,x} \ \ \mbox{ for } x>L_0.
\]
As $\varphi(\infty)=0$, it is necessary that $C_6>0=C_5$. Hence, $\varphi(x)=C_6e^{-\sqrt{-(g'(0)+\lambda_1)}x}$ for
$x>L_0$ and
\[
\varphi'(L_0+0)=-C_6\sqrt{-(g'(0)+\lambda_1)}e^{-\sqrt{-(g'(0)+\lambda_1)}L_0}<0.
\]
Using this, \eqref{dde0} and the condition $\varphi'(L_0-0)=\varphi'(L_0+0)$, we arrive at a contradiction.

A combination of the results proved in the above four steps establishes our previous claim.

Now, for $x\in[0,L]$, we obtain from the first equation of \eqref{eigen-p} that
\[
-\varphi''=(f'(0)+\lambda_1)\varphi\ \ \mbox{and}\ \ f'(0)+ \lambda_1>0.
\]
Then there exist two constants $C_7$ and $C_8$  such that
\[
\varphi(x)=C_7\cos \sqrt{f'(0)+ \lambda_1}\,x +C_8\sin \sqrt{f'(0)+ \lambda_1}\,x \ \ \mbox{ for } x\in[0,L].
\]
The facts $\varphi'(0)=0$ and $\varphi>0$ for $x\geq 0$ yield $C_7>0=C_8$. Thus,
\[
\varphi(x)=C_7\cos \sqrt{f'(0)+ \lambda_1}\,x \ \ \mbox{ for } x\in[0,L],
\]
and
\[
\frac{\varphi'(L-0)}{\varphi(L-0)}=-\sqrt{f'(0)+\lambda_1}\tan{\sqrt{f'(0)+\lambda_1}L},
\]
with $0<\sqrt{f'(0)+\lambda_1}L<\frac{\pi}{2}$.

Similarly, it follows from the second equation of \eqref{eigen-p} that
\[
-\varphi''=(g'(0)+\lambda_1)\varphi\ \ \mbox{and}\ \ g'(0)+ \lambda_1<0.
\]
Clearly, it holds
\[
\varphi(x)=C_9e^{-\sqrt{-(g'(0)+\lambda_1)}\,x}+C_{10}e^{\sqrt{-(g'(0)+\lambda_1)}\,x} \ \ \mbox{ for } x>L,
\]
for constants $C_9,\,C_{10}$.
Since $\varphi(\infty)=0$, then $C_9>0=C_{10}$, and
\[
\varphi(x)=C_9e^{-\sqrt{-(g'(0)+\lambda_1)}x} \ \ \mbox{ for } x>L,
\]
and
\[
\frac{\varphi'(L+0)}{\varphi(L+0)}=-\sqrt{-g'(0)-\lambda_1}.
\]
Thus we have
\[
\sqrt{f'(0)+\lambda_1}\tan{\sqrt{f'(0)+\lambda_1}L}=\sqrt{-g'(0)-\lambda_1},
\]
that is,
\[
L=\frac{1}{\sqrt{f'(0)+\lambda_1}}\arctan\sqrt{-\frac{g'(0)+\lambda_1}{f'(0)+\lambda_1}}\ \ \mbox{ and }\ 0<\sqrt{f'(0)+\lambda_1}L<\frac{\pi}{2}.
\]

It is obvious that $\lambda_1$ is decreasing in $L>0$. Moreover, it is easily checked that when
$L=L_*:=\frac{1}{\sqrt{f'(0)}}\arctan\sqrt{-\frac{g'(0)}{f'(0)}}$, then $\lambda_1=0$, and
\[
\lim_{L\to0}\lambda_1=-g'(0)>0,\ \ \ \ \ \ \ \lim_{L\to\infty}\lambda_1=-f'(0)<0.
\]
Thanks to the monotonicity of $\lambda_1$ in $L$, we can derive all the other assertions of the lemma.
\end{proof}

\subsection{Sufficient conditions for vanishing}
Now we give some sufficient conditions for vanishing. Let $u_\sigma$ be a solution of \eqref{p} with $u_0=\phi_\sigma\in X^+$, and define
 $$
 \Sigma_0=\{\sigma>0:\ u_\sigma(t,x)\to0\ \mbox{uniformly on} \ \R_+,\ \mbox{as}\ t\to\infty\}.
 $$
Then we have

\begin{lem}\label{lem:vanishing}
Let $L_*$ be given in Lemma \ref{lem:1eigenvalue} and assume that $(\Phi_1)$--$(\Phi_3)$ hold.
If $0<L<L_*$, then $\Sigma_0$ is nonempty.
\end{lem}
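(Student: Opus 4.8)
The plan is to exploit the sign of the principal eigenvalue $\lambda_1(L)$ established in Lemma \ref{lem:1eigenvalue}: since $0<L<L_*$, we have $\lambda_1(L)>0$, and this gives us a supersolution that decays to zero. First I would recall that the linearization of \eqref{p} at the trivial state is exactly the eigenvalue problem \eqref{eigen-p}, and because $g'(0)<0$ and $f'(0)>0$ with $g,f$ globally Lipschitz, there is a constant $\delta>0$ such that $f(u)\le f'(0)u$ for $u\in[0,\delta]$ and $g(u)\le g'(0)u$ for $u\in[0,\delta]$ (indeed both reaction terms are dominated near $0$ by their linearizations, using $f(0)=g(0)=0$, the one-sided behavior in \eqref{bi}, \eqref{mono}, and smoothness; one can take $\delta$ small). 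The candidate supersolution is $\bar u(t,x)=\eta\, e^{-\lambda_1(L)t}\,\varphi_1^L(x)$, where $\varphi_1^L$ is the positive principal eigenfunction normalized so that $\|\varphi_1^L\|_{L^\infty([0,\infty))}=1$, and $\eta>0$ is chosen small enough that $\eta\le\delta$.

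The key steps, in order: (1) verify that $\bar u$ is a genuine supersolution of the piecewise system \eqref{p}: on $0<x<L$ we compute $\bar u_t-\bar u_{xx}-f(\bar u) = -\lambda_1\bar u - \eta e^{-\lambda_1 t}(\varphi_1^L)'' - f(\bar u)$; using $-(\varphi_1^L)'' = (f'(0)+\lambda_1)\varphi_1^L$ this equals $\eta e^{-\lambda_1 t}f'(0)\varphi_1^L - f(\bar u)\ge 0$ since $\bar u\le\delta$ and $f(\bar u)\le f'(0)\bar u$; the analogous computation on $x>L$ uses $g(\bar u)\le g'(0)\bar u$; the Neumann condition $\bar u_x(t,0)=0$ holds because $(\varphi_1^L)'(0)=0$; and the $C^1$-matching at $x=L$ is exactly the matching conditions satisfied by $\varphi_1^L$. (2) Choose $\sigma$ small: by $(\Phi_3)$, $\lim_{\sigma\to0}\phi_\sigma=0$ in $L^\infty(\R_+)$, so there is $\sigma_0>0$ with $\|\phi_\sigma\|_{L^\infty(\R_+)}\le\eta\,\min_{[0,\hbar]}\varphi_1^L$ — wait, more simply, since $\varphi_1^L$ is continuous and positive on the compact support $[0,\hbar]$ of $\phi_\sigma$, for $\sigma$ small enough we get $\phi_\sigma(x)\le\eta\,\varphi_1^L(x)$ a.e.\ (here one should be slightly careful that $\hbar$ may depend on $\sigma$; but for $\sigma$ small the support shrinks or at least stays bounded by $(\Phi_2)$, and on any fixed bounded interval $\varphi_1^L$ has a positive lower bound, so choosing $\sigma_0$ small works). (3) Apply the comparison principle for the piecewise-defined parabolic problem \eqref{p} (valid by the regularity/comparison theory cited after \eqref{p-i} and \eqref{p}) to conclude $0\le u_\sigma(t,x)\le \eta e^{-\lambda_1(L)t}\varphi_1^L(x)\to 0$ uniformly on $\R_+$ as $t\to\infty$, since $\lambda_1(L)>0$ and $\varphi_1^L$ is bounded. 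Hence every such $\sigma\in(0,\sigma_0)$ lies in $\Sigma_0$, so $\Sigma_0\neq\emptyset$.

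I expect the main obstacle to be two technical points rather than a deep one. The first is establishing the pointwise domination of $f,g$ by their linearizations on a neighborhood of $0$ uniformly — one must invoke $g'(0)<0$, $g'(1)<0$, \eqref{bi} and global Lipschitz continuity to get $f(u)-f'(0)u\le 0$ and $g(u)-g'(0)u\le 0$ on $[0,\delta]$; this is routine from $\frac{d}{du}(f(u)-f'(0)u)|_{u=0}=0$ together with $f(0)=0$, but it should be stated carefully. The second, and slightly more delicate, is handling the comparison at the interface $x=L$: the supersolution is only $C^1$ there (not $C^2$), so one invokes the comparison principle in the appropriate weak/piecewise sense — exactly the framework under which \eqref{p} is well-posed — together with the matching of first derivatives so that no spurious positive "flux jump" is created. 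Once these are in place the decay estimate is immediate from $\lambda_1(L)>0$.
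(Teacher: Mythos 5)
Your overall strategy (eigenfunction supersolution, comparison principle, $(\Phi_3)$ to get small initial data below it) is the same as the paper's, but there is a genuine gap in step (1): the inequalities $f(u)\le f'(0)u$ and $g(u)\le g'(0)u$ on some interval $[0,\delta]$ do \emph{not} follow from the standing hypotheses. From $f(0)=0$ and differentiability you only get $f(u)=f'(0)u+o(u)$; the sign of the correction is unconstrained. Indeed, for the standard cubic bistable choice $g(u)=-u(\theta-u)(1-u)$ one has $g(u)-g'(0)u=u^{2}\bigl[(1+\theta)-u\bigr]>0$ for all small $u>0$, so $g(u)>g'(0)u$ near $0$; similarly a monostable $f$ need not satisfy the KPP-type bound $f(u)\le f'(0)u$, and no such condition is assumed in \eqref{bi}, \eqref{mono} or (H). Since you chose the full decay rate $e^{-\lambda_1 t}$, your supersolution computation reduces exactly to $f'(0)\bar u-f(\bar u)\ge 0$ (resp.\ $g'(0)\bar u-g(\bar u)\ge 0$), with no margin left to absorb the $o(u)$ error, so the verification that $\bar u$ is a supersolution breaks down precisely where you dismissed the issue as ``routine.''

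The fix is to give up part of the decay rate, which is what the paper does: take $w(t,x)=\delta e^{-\frac{\lambda_1}{2}t}\varphi_1^L(x)$ and choose $\delta$ so small that $f(s)\le\bigl(f'(0)+\tfrac{\lambda_1}{2}\bigr)s$ and $g(s)\le\bigl(g'(0)+\tfrac{\lambda_1}{2}\bigr)s$ on $[0,\delta]$; these inequalities \emph{do} hold for small $\delta$ because $f(s)/s\to f'(0)$ and $g(s)/s\to g'(0)$ as $s\to0^{+}$ and $\lambda_1(L)>0$ supplies the needed slack. With that modification the supersolution computation gives $w_t-w_{xx}-f(w)=\bigl(f'(0)+\tfrac{\lambda_1}{2}\bigr)w-f(w)\ge0$ on $(0,L)$ and analogously for $x>L$, and the rest of your argument (positivity of $\varphi_1^L$ on the bounded support of $\phi_\sigma$, the $C^1$ matching at $x=L$, the piecewise comparison principle, and the uniform decay since $\varphi_1^L$ is bounded) goes through as you wrote it and matches the paper's proof. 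So the idea is right, but the positivity of $\lambda_1(L)$ must be used twice — both for the temporal decay and to dominate the nonlinearities near $0$ — not only for the decay.
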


\begin{proof} By Lemma \ref{lem:1eigenvalue}, the eigenvalue problem \eqref{eigen-p} with $L\in(0,L_*)$ admits a positive principal
eigenvalue $\lambda_1:=\lambda_1(L)$, and the corresponding positive eigenfunction $\varphi_1^L$ can be normalized so that $\|\varphi_1^L\|_{L^{\infty}}=1$. Set
\[
w(t,x) :=\delta e^{-\frac{\lambda_1}{2}t}\varphi_1^L(x)\ \mbox{ for } t\geq0, x \geq 0,
\]
with some $\delta>0$ so small that
\begin{equation}\label{delt}
f(s)\leq\big(f'(0)+\frac{\lambda_1}{2}\big)s\ \mbox{ and }\ g(s)\leq\big(g'(0)+\frac{\lambda_1}{2}\big)s\ \ \mbox{ for } s\in[0,\delta].
\end{equation}
A simple calculation yields that for $t>0$ and $0\leq x \leq L$,
$$
w_t-w_{xx}-f(w)  = w\big(\frac{\lambda_1}{2}+f'(0)\big)-f(w) \geq 0,
$$
and for $t>0$ and $x> L$,
$$
w_t-w_{xx}-g(w)= w\big(\frac{\lambda_1}{2}+g'(0)\big)-g(w) \geq 0,
$$
where we have used \eqref{delt}. Since $\varphi$ is the principal eigenfunction, clearly
$w(t,L-0) = w(t, L+0)$ and  $w'(t,L-0) = w'(t, L+0),\,\forall t>0$.

Therefore, $w$ will be a supersolution of \eqref{p} if $u_0(x)=\phi_\sigma(x)\leq w(0,x)$ in $[0,\hbar]$, where $\hbar$ is defined in Section 1. Choose $\hat{\sigma}:=\delta\min_{x\in[0,\hbar]}\varphi_1^L(x)$,
then when $\|u_0\|_{L^\infty}\leq \hat{\sigma}$ we have $u_0(x)\leq w(0,x)$ in $[0,\hbar]$.
Thus, $w$ is a supersolution of \eqref{p}. The comparison principle can be used to deduce that $u_\sigma(t,x)\leq w(t,x)$ for $t\geq0$ and $x \geq 0$. This, together with
the fact that $\|w\|_{L^\infty([0,\infty))}\to 0$ as $t\to \infty$, implies that vanishing happens for $u_\sigma$. This proves the lemma.
\end{proof}

From Lemma \ref{lem:vanishing} and its proof, we immediately obtain

\begin{cor}\label{cor:BC-vanishing-finite}
Let $\delta$ and $\varphi_1^L$ be given in the proof of Lemma \ref{lem:vanishing}. If $0<L<L_*$ and there is some $t_0>0$ such that
\[
u_\sigma(t_0,x)\leq \delta \varphi_1^L(x),\ \ \mbox{ for all}\ x\in[0,\infty),
\]
then vanishing happens for $u_\sigma$.
\end{cor}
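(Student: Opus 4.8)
The plan is to reuse, essentially verbatim, the supersolution constructed in the proof of Lemma~\ref{lem:vanishing}, exploiting the fact that problem~\eqref{p} is autonomous in $t$ so that the time origin may be translated to $t_0$. Recall from that proof that, with $\delta>0$ chosen so small that \eqref{delt} holds, the function
\[
w(s,x):=\delta e^{-\frac{\lambda_1}{2}s}\varphi_1^L(x),\qquad s\geq0,\ x\geq0,
\]
with $\lambda_1=\lambda_1(L)>0$ (since $0<L<L_*$, by Lemma~\ref{lem:1eigenvalue}), is a supersolution of \eqref{p}: it satisfies the required differential inequalities on $(0,L)$ and on $(L,\infty)$ — here one uses that $0\le w(s,x)\le\delta\|\varphi_1^L\|_{L^\infty}=\delta$ because $e^{-\lambda_1 s/2}\le1$ for $s\ge0$, so the pointwise bounds \eqref{delt} on $f$ and $g$ are applicable — together with the connection conditions $w(s,L-0)=w(s,L+0)$ and $w_x(s,L-0)=w_x(s,L+0)$.

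Next I would set $v(s,x):=u_\sigma(t_0+s,x)$ for $s\ge0$, $x\ge0$. Since \eqref{p} involves no explicit dependence on $t$, the function $v$ is again a bounded nonnegative solution of \eqref{p}, now with initial datum $v(0,\cdot)=u_\sigma(t_0,\cdot)$, which by hypothesis satisfies $v(0,x)\le\delta\varphi_1^L(x)=w(0,x)$ for all $x\ge0$. Applying the comparison principle for \eqref{p} — the same one invoked in the proof of Lemma~\ref{lem:vanishing}, valid across the interface $x=L$ thanks to the strong maximum principle and the Hopf boundary lemma, and applicable to merely bounded nonnegative data — I obtain
\[
u_\sigma(t_0+s,x)=v(s,x)\le w(s,x)=\delta e^{-\frac{\lambda_1}{2}s}\varphi_1^L(x)\le\delta e^{-\frac{\lambda_1}{2}s}
\]
for all $s\ge0$ and $x\ge0$. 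Letting $t=t_0+s\to\infty$ then gives $0\le u_\sigma(t,x)\le\delta e^{-\frac{\lambda_1}{2}(t-t_0)}\to0$ uniformly in $x\in[0,\infty)$, which is exactly vanishing for $u_\sigma$.

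There is no serious obstacle in this argument; it is a short time-translation variant of Lemma~\ref{lem:vanishing}. The only points requiring (minor) care are that $u_\sigma(t_0,\cdot)$ need not be compactly supported, so the comparison principle must be used in its form for bounded nonnegative data rather than for data in $X^+$ — which is precisely the setting already employed in Lemma~\ref{lem:vanishing} and justified by the uniform boundedness of $u_\sigma$ in space and time — and that the supersolution estimate relies on $w$ remaining in the range $[0,\delta]$ for every $s\ge0$, which holds because $\lambda_1>0$ forces $e^{-\lambda_1 s/2}\le1$.
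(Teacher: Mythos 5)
Your proposal is correct and is essentially the paper's own argument: the paper states the corollary as an immediate consequence of Lemma \ref{lem:vanishing} and its proof, i.e., one restarts at time $t_0$ with the same supersolution $w(s,x)=\delta e^{-\lambda_1 s/2}\varphi_1^L(x)$ and applies the comparison principle, exactly as you do. Your remarks on time-translation invariance and on using comparison for bounded (not necessarily compactly supported) data are the right minor justifications and raise no issues.
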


We next establish two decay properties of solutions at infinity w.r.t $x$.
\begin{lem}\label{lem:decay}
 Let $0<L<L_*$ and  $u_\sigma$ be a solution of \eqref{p} with $u_0=\phi_\sigma\in X^+$.  Assume that vanishing
 happens for $u_\sigma$. Then there are constants $\kappa_1>0$ and $x_1\gg 1$ such that
 \begin{equation}\label{dequ}
 u_\sigma(t,x)\leq \kappa_1 e^{-\sqrt{-g'(0)-\epsilon_0}\,x},
 \ \ \ \ \forall t\geq0,\ x\geq x_1,
 \end{equation}
where $\epsilon_0=\frac{1}{2}\min\{-g'(0),\ \lambda_1(L)\}>0$.
\end{lem}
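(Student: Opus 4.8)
The plan is to establish the exponential decay estimate \eqref{dequ} by building a stationary supersolution of the $g$-equation on a half-line $[x_1,\infty)$ that dominates $u_\sigma$ there for all time, and then invoking the comparison principle on the exterior region. First I would exploit the hypothesis that vanishing happens: since $u_\sigma(t,x)\to 0$ uniformly on $\R_+$ as $t\to\infty$, there is a time $T_0$ after which $\|u_\sigma(t,\cdot)\|_{L^\infty(\R_+)}$ is so small that, thanks to $g'(0)<0$ and the Lipschitz/differentiability of $g$ at $0$, one has $g(s)\le (g'(0)+\epsilon_0)s$ for all $s\in[0,\|u_\sigma(t,\cdot)\|_\infty]$, where $\epsilon_0=\tfrac12\min\{-g'(0),\lambda_1(L)\}$. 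Note $g'(0)+\epsilon_0<0$, so $\mu:=\sqrt{-g'(0)-\epsilon_0}$ is a well-defined positive number, and the function $x\mapsto e^{-\mu x}$ satisfies $(e^{-\mu x})''+(g'(0)+\epsilon_0)e^{-\mu x}=0$.

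The key steps, in order, are as follows. (1) Fix $x_1>L$ (so that on $[x_1,\infty)$ the equation for $u_\sigma$ is $u_t=u_{xx}+g(u)$, with no interface). (2) For $t\ge T_0$, consider the comparison function $\bar u(t,x):=\kappa_1 e^{-\mu x}$ on $[x_1,\infty)$ with $\kappa_1$ to be chosen. Using the bound on $g$ from Step (0) above, check $\bar u_t-\bar u_{xx}-g(\bar u)=-\kappa_1 e^{-\mu x}(g'(0)+\epsilon_0)-g(\kappa_1 e^{-\mu x})\ge 0$, so $\bar u$ is a stationary supersolution of the $g$-equation. (3) Choose $\kappa_1$ large enough that the lateral and initial boundary conditions hold: at $x=x_1$ we need $\kappa_1 e^{-\mu x_1}\ge \sup_{t\ge T_0}u_\sigma(t,x_1)$, which is finite (indeed bounded by $\|u_\sigma\|_\infty$, itself finite), and at $t=T_0$ we need $\kappa_1 e^{-\mu x}\ge u_\sigma(T_0,x)$ for all $x\ge x_1$; since $u_\sigma(T_0,\cdot)$ is bounded this also holds once $\kappa_1 e^{-\mu x_1}$ exceeds $\|u_\sigma\|_\infty$. (4) Apply the comparison principle on the parabolic domain $(T_0,\infty)\times(x_1,\infty)$ to conclude $u_\sigma(t,x)\le \kappa_1 e^{-\mu x}$ for $t\ge T_0$, $x\ge x_1$. (5) Finally, extend the estimate to all $t\ge 0$: on the compact time interval $[0,T_0]$ the solution $u_\sigma$ is bounded (by parabolic regularity and boundedness of the solution), so enlarging $\kappa_1$ if necessary — replacing it by $\max\{\kappa_1,\ e^{\mu x_1}\sup_{0\le t\le T_0}\|u_\sigma(t,\cdot)\|_\infty\}$ — yields \eqref{dequ} for all $t\ge 0$ and $x\ge x_1$.

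The main obstacle, and the only point requiring genuine care, is the interplay between the time $T_0$ (after which the smallness of $\|u_\sigma\|_\infty$ licenses the linear bound $g(s)\le(g'(0)+\epsilon_0)s$) and the choice of $x_1$: one must make sure the chosen comparison function is a supersolution \emph{uniformly in time} on $[T_0,\infty)\times[x_1,\infty)$, which is why we first pass to the regime where $u_\sigma$ is globally small in $x$ before localizing in $x$. A secondary technical point is that $x_1$ must be taken strictly larger than $L$ so that the interface conditions at $x=L$ play no role in the exterior comparison; the notation $x_1\gg 1$ in the statement simply records that $x_1$ is chosen large (in particular $x_1>L$), and no finer condition on $x_1$ is needed. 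Everything else is a routine application of the parabolic comparison principle together with the sign condition $g'(0)<0$.
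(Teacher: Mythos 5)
Your strategy (wait until vanishing makes $u_\sigma$ uniformly small, then compare with $\kappa_1 e^{-\mu x}$ on $[x_1,\infty)$) is reasonable in outline, but as written it has two genuine gaps. First, the supersolution check in step (2) is not valid: the inequality $g(s)\le (g'(0)+\epsilon_0)s$ you extracted from vanishing holds only for $s$ in the (small) range of $u_\sigma(t,\cdot)$, $t\ge T_0$, yet you apply it to $g(\bar u)$ with $\bar u=\kappa_1e^{-\mu x}$, whose values near $x=x_1$ are forced in step (3) to be at least $\|u_\sigma\|_{L^\infty}$ — a quantity that may well exceed $\theta$ (vanishing initial data need not be small). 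Wherever $\bar u\in(\theta,1)$ one has $g(\bar u)>0>(g'(0)+\epsilon_0)\bar u$, so $\bar u$ is not a supersolution there; the linear bound on $g$ simply cannot be invoked at the barrier's own values unless the barrier is capped at a level $\delta_1$ for which $g(s)\le(g'(0)+\epsilon_0)s$ on $[0,\delta_1]$. (There is also a sign slip in your displayed computation of $\bar u_t-\bar u_{xx}-g(\bar u)$, but that is cosmetic compared with the range-of-validity issue.) Second, and more seriously, steps (3) and (5) rest on the inference that a bounded function on the half-line $[x_1,\infty)$ can be dominated by $\kappa_1e^{-\mu x}$ after enlarging $\kappa_1$; this is false, since the exponential tends to $0$ while your bound on $u_\sigma(T_0,\cdot)$ (and on $u_\sigma(t,\cdot)$ for $t\in[0,T_0]$) is only a constant. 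To order the solution below the barrier at the initial comparison time and on $[0,T_0]$ you need an actual spatial decay estimate at finite times — for instance the Gaussian bound of Lemma \ref{lem:decay1}, obtained by comparison with $\bar u_t=\bar u_{xx}+M\bar u$ and the compactly supported data — which "boundedness plus parabolic regularity" does not provide. With those two repairs (cap the barrier at $\delta_1$, and use the Gaussian estimate to secure the ordering for $x\ge x_1$ at time $T_0$ and on $[0,T_0]$), your waiting-time argument can be completed, but as stated the proof does not go through.

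For comparison, the paper sidesteps both difficulties by using a time-independent \emph{nonlinear} barrier: the shifted ground state $\Phi(\cdot-\rho)$ of $v''+g(v)=0$ with $\Phi(0)=\theta^*$, which is an exact stationary solution of the exterior equation, so no linearization of $g$ is needed. The vanishing hypothesis enters only through the boundary control $u_\sigma(t,\rho)<\theta^*=\Phi(0)$ for all $t\ge0$ (via Pol\'a\v{c}ik's results cited in the paper), and the compact support of $u_0$ gives the initial ordering on $\{x\ge\rho\}$; comparison then yields $u_\sigma(t,x)\le\Phi(x-\rho)$ for all $t\ge0$ and $x\ge\rho$ in one stroke, and \eqref{dequ} follows from the exponential decay of $\Phi$, with constants independent of any waiting time.
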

\begin{proof} It is well known (see, e.g., \cite{DuLou,DM}) that the following problem
\begin{equation}\nonumber
v''+g(v)=0\ \ \ \mbox{in }\, \R
\end{equation}
admits a unique positive symmetrically decreasing solution $\Phi(x)$, which satisfies
\begin{equation}\nonumber
\Phi(0)=\theta^*,\ \ \ \Phi'(0)=0,\ \ \ \lim_{|x|\to\infty}\Phi(x)=0.
\end{equation}

Recall that $\lambda_1(L)>0$ if $0<L<L_*$. Thus, one can find a large $x_1>0$ so that
 \begin{equation}\nonumber
 \left\{
 \begin{array}{ll}
 -\Phi''=g(\Phi)\leq \big(g'(0)+\epsilon_0\big)\Phi, & x>x_1,\\
 \Phi(\infty)=0.
 \end{array}
 \right.
 \end{equation}
As a consequence, it is easily seen that
 \begin{equation}\label{vell-a}
 \Phi(x)\leq \kappa_1e^{-\sqrt{-g'(0)-\epsilon_0}\,x},\ \ \ x>x_1,
 \end{equation}
for some constant $\kappa_1>0$.

On the other hand, because vanishing happens for $u_\sigma$, it then follows from \cite{P}
(refer to Lemmas 4.3-4.6 there) that there is $\rho>0$ such that
\[
u_\sigma(t,x)<\theta^*=\Phi(x-\rho)\ \ \mbox{ for } t\geq 0,\ x=\rho.
\]
Thus the comparison principle gives that $u_\sigma(t,x)\leq \Phi(x-\rho)$ for $t\geq 0$ and $x\geq \rho$.
This and \eqref{vell-a} imply the estimate in \eqref{dequ} for $u_\sigma$ immediately.
\end{proof}

 \begin{lem}\label{lem:decay1}  Let $u_\sigma$ be a solution of \eqref{p} with $u_0=\phi_\sigma\in X^+$.
 Then there are two positive constants $\kappa_2$ and $M$ such that
 \begin{equation}\label{dequ1}
 u_\sigma(t,x)\leq \kappa_2 e^{Mt-\frac{(x-\hbar)^2}{4t}},\ \ \ \ \forall t\geq1,\ \ x\geq \hbar.
 \end{equation}
\end{lem}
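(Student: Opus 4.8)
The plan is to bound $u_\sigma$ from above by an explicit super-solution of the heat equation on the half-line with a linearized reaction term, and then write that super-solution in closed form using the heat kernel. The starting point is that $f$ and $g$ are globally Lipschitz and vanish at $0$, so there is a constant $M>0$ (depending only on the Lipschitz constants) with $f(s)\le Ms$ and $g(s)\le Ms$ for all $s\ge 0$. Also, by the uniform boundedness of the solution stated just before \eqref{U}, we have $0\le u_\sigma(1,\cdot)\le C$ on $[0,\infty)$ for some $C>0$; moreover the comparison principle shows $u_\sigma(1,x)$ decays as $x\to\infty$, and in any case $u_\sigma(1,x)\le C$ suffices. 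Consequently, $\bar u(t,x):=u_\sigma(t+1,x)$ satisfies $\bar u_t-\bar u_{xx}\le M\bar u$ on $t>0$, $x>0$, together with $\bar u_x(t,0)=0$ and $\bar u(0,x)\le C$; note the transmission conditions at $x=L$ mean $\bar u$ is a genuine (weak, $C^1$) sub-solution of this single linear problem across $x=L$.

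The second step is to exhibit the comparison function. Let $v(t,x)$ solve the linear heat equation $v_t=v_{xx}$ on $x>0$, $t>0$ with Neumann condition $v_x(t,0)=0$ and initial datum $v(0,x)=C\,\mathbf 1_{[0,\hbar]}(x)$; then $e^{Mt}v(t,x)$ is a super-solution of the problem for $\bar u$, so by the parabolic comparison principle $\bar u(t,x)\le e^{Mt}v(t,x)$. The Neumann heat kernel on the half-line is the even reflection of the Gaussian, so
\begin{equation*}
v(t,x)=\int_0^{\hbar}\frac{C}{\sqrt{4\pi t}}\Bigl(e^{-\frac{(x-y)^2}{4t}}+e^{-\frac{(x+y)^2}{4t}}\Bigr)dy,
\end{equation*}
and for $x\ge \hbar$ both $|x-y|$ and $x+y$ are at least $x-\hbar$ on the range $0\le y\le\hbar$, so each Gaussian factor is bounded by $e^{-\frac{(x-\hbar)^2}{4t}}$. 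Therefore $v(t,x)\le \frac{2C\hbar}{\sqrt{4\pi t}}e^{-\frac{(x-\hbar)^2}{4t}}$, and since $t\ge 1$ (recall $\bar u(t,\cdot)=u_\sigma(t+1,\cdot)$, and the claim is stated for $t\ge 1$) the prefactor $\frac{2C\hbar}{\sqrt{4\pi t}}$ is bounded by a constant $\kappa_2$. Combining, $u_\sigma(t+1,x)\le \kappa_2 e^{Mt}e^{-\frac{(x-\hbar)^2}{4t}}$ for $t\ge 0$, $x\ge \hbar$; reindexing $t\mapsto t-1$ and absorbing the resulting bounded factors into $\kappa_2$ and $M$ gives exactly \eqref{dequ1}.

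There is really no serious obstacle here; the only points requiring a little care are (i) justifying that $\bar u$ may be compared against a solution of the single linear equation despite the interface at $x=L$ — this is the same strong maximum principle / Hopf lemma argument already invoked in the proof of Theorem \ref{thm:convergence}, and the transmission conditions make $\bar u$ admissible as a sub-solution in the weak sense, so the comparison principle applies verbatim; and (ii) handling the initial layer, i.e. getting a uniform bound on $u_\sigma$ at a fixed positive time to serve as $C$, which follows from the uniform boundedness of the solution. One could alternatively avoid the time shift by using $u_0\in X^+\cap L^\infty$ directly as the initial datum and noting $u_0$ is supported in $[0,\hbar]$; the Gaussian estimate is identical and the statement then holds for all $t\ge 1$ as claimed, with the factor $t^{-1/2}\le 1$ absorbed into $\kappa_2$.
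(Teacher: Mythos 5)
Your overall strategy is the same as the paper's: use the global Lipschitz bound $f(s),g(s)\le Ms$ to dominate $u_\sigma$ by a solution of the linear problem $\bar u_t=\bar u_{xx}+M\bar u$ with compactly supported data, and then read off the Gaussian decay from the explicit heat-kernel formula (the paper does this by even reflection to the whole line, you by the Neumann kernel on the half-line — the same thing). However, the version you present as the main argument, with the time shift to $t+1$ and the comparison datum $v(0,x)=C\,\mathbf{1}_{[0,\hbar]}(x)$, does not work: by the strong maximum principle (infinite speed of propagation), $u_\sigma(1,x)>0$ for every $x>\hbar$, so the ordering $\bar u(0,x)=u_\sigma(1,x)\le v(0,x)$ fails precisely on the region $x>\hbar$ where you want the estimate, and the comparison principle cannot be invoked. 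The bound $u_\sigma(1,\cdot)\le C$ is not enough; what makes the Gaussian tail appear is that the \emph{initial} datum $u_0=\phi_\sigma$ is supported in $[0,\hbar]$ (that is the definition of $\hbar$), and this support information is already destroyed at time $t=1$.

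The remedy you mention in your last sentence — drop the time shift and compare from $t=0$ with initial datum $u_0$ itself, supported in $[0,\hbar]$ — is not an optional variant but the necessary argument, and it is exactly the paper's proof: one solves $\bar u_t=\bar u_{xx}+M\bar u$ on $\R$ with datum $u_0(|x|)$ on $(-\hbar,\hbar)$ and $0$ elsewhere, writes $\bar u(t,x)=\frac{e^{Mt}}{\sqrt{4\pi t}}\int_{-\hbar}^{\hbar}e^{-\frac{(x-y)^2}{4t}}\bar u_0(y)\,dy$, and bounds this by $\kappa_2 e^{Mt-\frac{(x-\hbar)^2}{4t}}$ for $t\ge1$, $x\ge\hbar$, the restriction $t\ge 1$ serving only to control the factor $t^{-1/2}$. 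With that route (and the routine observation that the transmission conditions at $x=L$ do not obstruct the comparison, as you note), the proof is complete and agrees with the paper's.
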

\begin{proof} Since $f,\ g$ are globally Lipschitz, we can find a positive constant $M$ such that
\[
f(u),\ g(u) \leq M u,\ \ \mbox{ for } u\geq0.
\]
Consider the Cauchy problem
\begin{equation}\label{prob-w}
\left\{
\begin{array}{ll}
 \bar{u}_t =\bar{u}_{xx} +M\bar{u}, & x\in\R,\ t>0,\\
 \bar{u}(0,x)= \bar{u}_0 (x), &  x \in \R,
  \end{array}
 \right.
 \end{equation}
where
$$
\bar{u}_0 (x) = \left\{
\begin{array}{ll}
     u_0(|x|),\ \ & x\in (-\hbar, \hbar),\\
      0, \ \ & x\not\in (-\hbar, \hbar).
\end{array}
\right.
$$
Then we have the explicit expression for $\bar{u}$:
$$
\bar{u} (t,x)  =  \frac{e^{Mt}}{\sqrt{4\pi t}} \int_{-\hbar}^\hbar
e^{-\frac{(x-y)^2}{4t}} {\bar{u}}_0 (y)dy  \leq \kappa_2 e^{Mt-\frac{(x-\hbar)^2}{4t}},\ \ \forall t\geq1, x\geq \hbar,
$$
for some constant $\kappa_2>0$.

By the standard comparison theorem, we conclude that $u_\sigma(t,x) \leq
\bar{u}(t,x)$ for $t\geq1$ and $x\geq\hbar$, and the desired
inequality follows.
\end{proof}

We end this subsection with some useful properties of the set $\Sigma_0$ for vanishing.
\begin{lem}\label{lem:sd0} Let $\Sigma_0$ be defined as before. The following assertions hold.
 \begin{itemize}
\item[\vspace{10pt}(i)] \;If $0<L<L_*$, then the set $\Sigma_0$ is an open interval $(0,\sigma_*)$
                      for some $\sigma_*\in(0,\infty];$
\item[\qquad(ii)] \;If $L>L_*$, then the set $\Sigma_0$ is empty.
\end{itemize}
\end{lem}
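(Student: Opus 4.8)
The plan is to prove the two assertions separately, relying on the comparison principle and the monotonicity built into the family $\{\phi_\sigma\}$. For part (i), I would first note that $\Sigma_0$ is nonempty by Lemma \ref{lem:vanishing}. The crucial structural fact is that $\Sigma_0$ is a \emph{downward-closed} subset of $(0,\infty)$: if $\sigma\in\Sigma_0$ and $0<\tilde\sigma<\sigma$, then by condition $(\Phi_2)$ we have $\phi_{\tilde\sigma}\le,\not\equiv\phi_\sigma$ a.e., so the comparison principle gives $u_{\tilde\sigma}(t,x)\le u_\sigma(t,x)$ for all $t\ge0$, $x\ge0$; since $u_\sigma\to0$ uniformly and $u_{\tilde\sigma}\ge0$, also $u_{\tilde\sigma}\to0$ uniformly, hence $\tilde\sigma\in\Sigma_0$. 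Together with nonemptiness this shows $\Sigma_0$ is an interval of the form $(0,\sigma_*)$ or $(0,\sigma_*]$ with $\sigma_*=\sup\Sigma_0\in(0,\infty]$. (If $\sigma_*=\infty$ there is nothing more to prove.)

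The remaining work in (i) is to show the interval is \emph{open}, i.e. $\sigma_*\notin\Sigma_0$ when $\sigma_*<\infty$. I would argue by contradiction: suppose $\sigma_*\in\Sigma_0$, so vanishing happens for $u_{\sigma_*}$. By Corollary \ref{cor:BC-vanishing-finite}, it suffices to produce some $\sigma>\sigma_*$ and some time $t_0>0$ with $u_\sigma(t_0,\cdot)\le\delta\varphi_1^L$ on $[0,\infty)$, which would force $\sigma\in\Sigma_0$ and contradict $\sigma_*=\sup\Sigma_0$. To get such a bound I would use the two decay estimates just established: Lemma \ref{lem:decay} gives, for $u_{\sigma_*}$, an exponential spatial decay $u_{\sigma_*}(t,x)\le\kappa_1 e^{-\sqrt{-g'(0)-\epsilon_0}\,x}$ valid for all $t\ge0$ and $x\ge x_1$, which eventually lies below $\delta\varphi_1^L(x)$ on $[x_1,\infty)$ up to adjusting $x_1$ (recall $\varphi_1^L$ decays slower, like $e^{-\sqrt{-g'(0)-\lambda_1(L)}\,x}$, and $\epsilon_0>\lambda_1(L)/2$ is chosen so the comparison of exponents works). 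On the compact set $[0,x_1]$, since vanishing holds, $u_{\sigma_*}(t,x)\to0$ uniformly, so there is $t_0$ with $u_{\sigma_*}(t_0,x)<\tfrac12\delta\min_{[0,x_1]}\varphi_1^L$ there. Hence $u_{\sigma_*}(t_0,\cdot)\le\tfrac12\delta\varphi_1^L$ on all of $[0,\infty)$, with strict room to spare. Finally, by continuous dependence of solutions on initial data (using $(\Phi_1)$: $\sigma\mapsto\phi_\sigma$ is continuous into $L^\infty$) and parabolic regularity, $u_\sigma(t_0,\cdot)$ depends continuously on $\sigma$ in $C_{loc}$, while the tail estimate of Lemma \ref{lem:decay1} controls $u_\sigma(t_0,\cdot)$ uniformly for $x$ large and $\sigma$ in a bounded range; combining these, for $\sigma>\sigma_*$ sufficiently close to $\sigma_*$ we still get $u_\sigma(t_0,\cdot)\le\delta\varphi_1^L$ on $[0,\infty)$. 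This yields the contradiction and proves $\Sigma_0=(0,\sigma_*)$.

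For part (ii), suppose $L>L_*$; I must show $\Sigma_0=\emptyset$, i.e. vanishing never occurs. By Lemma \ref{lem:1eigenvalue}, $\lambda_1(L)<0$, so the principal eigenvalue of \eqref{eigen-p} is negative with a positive eigenfunction $\varphi_1^L$. I would construct a small subsolution: for any $\sigma>0$, since $\phi_\sigma\not\equiv0$, the solution $u_\sigma(1,\cdot)$ is strictly positive on $[0,\infty)$ by the strong maximum principle, hence $u_\sigma(1,x)\ge\eta\,\varphi_1^L(x)$ on any chosen compact interval, for some $\eta>0$; localizing $\varphi_1^L$ to a large but finite interval $[0,R]$ (where the principal Dirichlet eigenvalue $\lambda_1^R(L)$ is still negative by \eqref{eig-1} once $R$ is large) and taking $\underline u(t,x)=\eta e^{-\frac{\lambda_1^R(L)}{2}(t-1)}\varphi_1^{L,R}(x)$ with $\eta$ small enough that the reaction terms satisfy $f(s)\ge(f'(0)+\lambda_1^R(L)/2)s$, $g(s)\ge(g'(0)+\lambda_1^R(L)/2)s$ on $[0,\eta\|\varphi_1^{L,R}\|_\infty]$, one checks $\underline u$ is a (generalized) subsolution of \eqref{p} on $[0,R]$ with zero boundary data at $x=R$. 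The comparison principle then gives $u_\sigma(t,x)\ge\underline u(t,x)$, and since $-\lambda_1^R(L)/2>0$, $\underline u$ does not tend to $0$; in particular $u_\sigma\not\to0$, so $\sigma\notin\Sigma_0$. As $\sigma>0$ was arbitrary, $\Sigma_0=\emptyset$.

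The main obstacle is the openness claim in (i): one must be careful that the supersolution bound $u_{\sigma_*}(t_0,\cdot)\le\delta\varphi_1^L$ is strict enough and stable enough under perturbation of $\sigma$ to survive passing to $\sigma>\sigma_*$ — this is where the spatial decay Lemma \ref{lem:decay} (for the slowly-decaying eigenfunction vs. the faster-decaying solution tail), the uniform tail control of Lemma \ref{lem:decay1}, and the continuous dependence on initial data all have to be assembled simultaneously. The downward-closedness and part (ii) are comparatively routine applications of the comparison principle with sub/supersolutions built from $\varphi_1^L$.
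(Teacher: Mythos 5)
Your part (i) is essentially the paper's own argument: downward closedness of $\Sigma_0$ by comparison, nonemptiness from Lemma \ref{lem:vanishing}, and openness obtained by combining the spatial decay of Lemma \ref{lem:decay} for the tail, uniform convergence to $0$ on a compact piece, continuous dependence on the initial datum together with the uniform-in-$\sigma$ Gaussian bound of Lemma \ref{lem:decay1}, and then Corollary \ref{cor:BC-vanishing-finite}; the only blemish is the parenthetical claim that ``$\epsilon_0>\lambda_1(L)/2$'': by its definition $\epsilon_0=\tfrac12\min\{-g'(0),\lambda_1(L)\}\le\lambda_1(L)/2$, and what is actually needed (and true) is $\epsilon_0<\lambda_1(L)$, so that the solution's decay exponent $\sqrt{-g'(0)-\epsilon_0}$ exceeds the eigenfunction's exponent $\sqrt{-g'(0)-\lambda_1(L)}$. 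This is a wording slip, not a structural problem.

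Part (ii) has a genuine gap in the subsolution construction. You take $\underline u(t,x)=\eta e^{-\frac{\lambda_1^R(L)}{2}(t-1)}\varphi_1^{L,R}(x)$ with $\lambda_1^R(L)<0$, so $\underline u$ grows exponentially in time; after a finite time it leaves the interval $[0,\eta\|\varphi_1^{L,R}\|_\infty]$ on which you imposed $f(s)\ge\big(f'(0)+\tfrac{\lambda_1^R(L)}{2}\big)s$ and $g(s)\ge\big(g'(0)+\tfrac{\lambda_1^R(L)}{2}\big)s$. Beyond that time the differential inequality defining a subsolution is no longer verified, so the comparison principle cannot be invoked for all $t$, and the conclusion ``$\underline u$ does not tend to $0$, hence $u_\sigma\not\to0$'' does not follow as written: knowing that $u_\sigma$ dominates a growing function only up to a finite time does not by itself exclude vanishing afterwards. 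The repair is immediate and is what the paper does: drop the exponential factor and use the time-independent generalized subsolution $\underline u(x)=\varrho\,\varphi_1^{L,R}(x)$ on $[0,R]$ (with $R>L+\hbar$ so large that $\lambda_1^R(L)<0$ by \eqref{eig-1}), extended by $0$ for $x>R$; the eigenvalue equation \eqref{eigen-p2} then gives $-\underline u''-f(\underline u)\le\tfrac{\lambda_1^R(L)}{2}\underline u\le0$ on $(0,L)$ and the analogous inequality with $g$ on $(L,R)$, so $\underline u$ is a subsolution for all time, and $u_\sigma(t,\cdot)\ge\underline u$ for $t\ge1$ rules out vanishing for every $\sigma>0$. (Alternatively, keep your growing barrier only until it reaches the smallness threshold and then restart from the stationary one, but the exponential factor buys you nothing here.)
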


\begin{proof} (i) \ Since $0<L<L_*$, it follows from Lemma \ref{lem:vanishing} and the parabolic comparison principle that
vanishing happens for all small $\sigma>0$, thus $\Sigma_0$ is not empty in this case.

Next we want to show that $\Sigma_0$ is an open interval. Fix any $\sigma_0\in \Sigma_0$, then vanishing happens for $\sigma=\sigma_0$,
and so the solution $u_{\sigma_0}$ of \eqref{p} with $u_0=\phi_{\sigma_0}$ satisfies \eqref{dequ} for some $\kappa_1>0$ and $x_1\gg 1$.
On the other hand, from the proof of Lemma \ref{lem:1eigenvalue} it follows that
\[
\varphi_1^L(x)= C_2e^{-\sqrt{-g'(0)-\lambda_1(L)}\,x}, \ \ \forall x>L,
\]
for some constant $C_2>0$. Due to $\epsilon_0<\lambda_1(L)$, this and \eqref{dequ}
enable one to find a constant $x_2>x_1+L$ large enough so that
\[
u_{\sigma_0}(t,x)<\delta\varphi_1^L(x),\ \ \forall x\geq x_2, \ t\geq0,
\]
where $\delta>0$ is given in the proof of Lemma \ref{lem:vanishing}. In addition, as vanishing happens for $u_{\sigma_0}$, there is $T_0>1$ such that
\[
u_{\sigma_0}(T_0,x)< \delta\min_{x\in[0,x_2]}\varphi_1^L(x)\leq \delta\varphi_1^L(x),\ \ \forall 0\leq x \leq x_2.
\]
Consequently, we have
\[
u_{\sigma_0}(T_0,x)< \delta\varphi(x),\ \ \mbox{ for all}\ x\geq 0.
\]

By the continuous dependence of the solution of \eqref{p} on its initial values, combined with the estimate \eqref{dequ1}, we
can conclude that if $\epsilon>0$ is sufficiently small, the solution $u_{\sigma_0+\epsilon}$
of \eqref{p} with $u_0=\phi_{\sigma_0+\epsilon}$ satisfies
\[
u_{\sigma_0+\epsilon}(T_0,x)\leq \delta\varphi(x),\ \ \mbox{ for all} \ x\geq 0.
\]
Thanks to this, it follows from Corollary \ref{cor:BC-vanishing-finite} that vanishing happens for $u_{\sigma_0+\epsilon}$, which implies that $\sigma_0+\epsilon\in\Sigma_0$. Moreover, by the comparison principle, $\sigma\in \Sigma_0$ for any $\sigma<\sigma_0+\epsilon$.
Thus $\Sigma_0$ is an open interval. Define $\sigma_*:=\sup \Sigma_0$, then $\Sigma_0=(0, \sigma_*)$.

\smallskip

(ii) Let $u_\sigma$ be the solution of \eqref{p} with $u_0(x)=\phi_\sigma(x)$.
We first recall that $\lambda_1(L)<0$ if $L>L_*$. In light of \eqref{eig-1}, $\lambda_1^R(L)<0$ for all large $R$ with $R>L+\hbar$.
The positive eigenfunction corresponding to $\lambda_1^R(L)$, denoted by $\varphi_1^{L,R}$,  solves \eqref{eigen-p2} and can be normalized so that $\|\varphi_1^{L,R}\|_{L^{\infty}}=1$. Set
 $$
\underline u(x)= \left\{
\begin{array}{ll}
     \varrho\varphi_1^{L,R}(x),\ \ & x\in [0, R],\\
      0, \ \ & x\in (R, \infty),
\end{array}
\right.
$$
where the constant $\varrho>0$ can be chosen to be sufficiently small such that
\begin{equation}\nonumber
f(s)\geq\big(f'(0)+\frac{\lambda_1^R(L)}{2}\big)s\ \mbox{ and }\ g(s)\geq\big(g'(0)+\frac{\lambda_1^R(L)}{2}\big)s\ \ \mbox{ for } s\in[0,\varrho].
\end{equation}
As a result, for $t>1$ and $0\leq x \leq L$, it is easily seen that
$$
\underline u_t-\underline u_{xx}-f(\underline u) \leq \frac{\lambda_1^R(L)}{2}\underline u\leq 0,
$$
and for $t>1$ and $x> L$,
$$
\underline u_t-\underline u_{xx}-g(\underline u)\leq \frac{\lambda_1^R(L)}{2}\underline u\leq 0.
$$
By the definition of $\varphi_1^{L,R}(x)$ and $R>L+\hbar$, clearly
$\underline u(L-0) = \underline u(L+0)$ and  $\underline u'(L-0) = \underline u'(L+0)$.

Furthermore, since $u_\sigma(1,x)>0$ for all $x\geq0$, we can take $\varrho$ to be smaller if necessary such that
$u_\sigma(1,x)>\underline u(x)$ for all $x\geq0$. Hence, $\underline u$ is a generalized subsolution of \eqref{p} for $t\geq1,\,x\geq0$. By the comparison principle, we obtain $u_\sigma(t,x)\geq \underline u(x)$ for $t>1$ and $x \geq 0$.
This apparently implies that vanishing can not happen for $u_\sigma$, which
completes the proof of the lemma.
\end{proof}

\subsection{Sufficient conditions for spreading}\label{subs:spr}
Based on the phase plane analysis we can give the following sufficient condition for spreading.

\begin{lem}\label{lem:spreadc}
Let $u_\sigma$ be a solution of \eqref{p} with $u_0(x)=\phi_\sigma(x)\in X^+$.
If for each $\alpha\in(\theta^*,1]$, $u_0\geq \alpha$ on $[r,r+2l_\alpha]$ for some $r\geq L$, where $l_\alpha$ is given in \eqref{la1}, then spreading happens for $u_\sigma$.
\end{lem}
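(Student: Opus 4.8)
The plan is to construct a compactly supported subsolution of \eqref{p} that lies below $u_\sigma(t_0,\cdot)$ at some time $t_0$, and which, under the evolution of \eqref{p}, is forced to increase and eventually converge to $1$ locally uniformly; by the comparison principle the same must then hold for $u_\sigma$, which, combined with the convergence theorem (Theorem \ref{thm:convergence}) and the list of stationary solutions (Lemma \ref{lem:staso}), rules out all limits except $U\equiv 1$, i.e. spreading.

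First I would use the function $v_\alpha$ from \eqref{vala}–\eqref{la1}: for $\alpha\in(\theta^*,1)$ close enough to $1$, $v_\alpha$ solves $v_\alpha''+g(v_\alpha)=0$ on $(0,2l_\alpha)$ with $v_\alpha(0)=v_\alpha(2l_\alpha)=0$ and $\max v_\alpha=\alpha$. The hypothesis says that, after translation to the interval $[r,r+2l_\alpha]\subset[L,\infty)$ where the equation in \eqref{p} is governed by $g$, we have $u_0\ge\alpha$ on this interval, hence (by the strong maximum principle applied at a positive time, replacing $u_0$ by $u_\sigma(t_0,\cdot)$ if one prefers to work with classical data) $u_\sigma(t_0,\cdot)$ dominates the translated $v_\alpha$ there, while the translated $v_\alpha$ vanishes outside $[r,r+2l_\alpha]$. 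Since $g(s)>0$ only where there is no subsolution room, the key point is that $v_\alpha$ is a (stationary) subsolution of the $g$-equation on $\R$: it satisfies $v_\alpha''+g(v_\alpha)=0$ on its support and is zero with a kink pointing upward at the endpoints, so it is a generalized subsolution. Therefore $u_\sigma(t_0+t,x)\ge w(t,x)$ where $w$ solves $w_t=w_{xx}+g(w)$ on $\R$ with $w(0,\cdot)$ the translated $v_\alpha$.

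Next I would invoke the classical threshold result for the bistable equation on $\R$ (this is exactly the Cauchy-problem statement recalled in the introduction via \cite{DM}, or the original Aronson–Weinberger / Fife–McLeod theory): since $\max v_\alpha=\alpha>\theta^*$ and the support has length $2l_\alpha$, for $\alpha$ sufficiently close to $1$ this initial datum exceeds the ground state $V$ in the appropriate sense, so $w(t,x)\to 1$ locally uniformly in $\R$ as $t\to\infty$. Consequently $\liminf_{t\to\infty}u_\sigma(t,x)\ge 1$ locally uniformly on $[0,\infty)$. Together with the uniform upper bound $u_\sigma\le 1$ (or rather $\le\max\{1,\|u_0\|_\infty\}$ with the standard argument that $u_\sigma$ eventually falls below $1+\varepsilon$, using $g(u)<0$ for $u>1$), this forces $u_\sigma(t,x)\to 1$ locally uniformly, which is spreading.

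The main obstacle is the first reduction: making rigorous that $u_\sigma$ at some positive time genuinely dominates a shifted copy of $v_\alpha$ that is large enough to be above the threshold. One must check that the hypothesis "$u_0\ge\alpha$ on $[r,r+2l_\alpha]$ for \emph{each} $\alpha\in(\theta^*,1]$'' (note $\alpha=1$ is allowed, and $l_\alpha\to\infty$ as $\alpha\to1$ since $G(s)-G(\alpha)\to 0$) is precisely what is needed to place a subsolution of arbitrarily large amplitude and width, so that after a short time the solution lies above an admissible threshold datum for the pure bistable flow on $\R$; here one invokes the quantitative hair-trigger/threshold estimates (e.g. that any continuous datum dominating $\theta^*+\eta$ on a sufficiently long interval spreads). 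A secondary technical point is handling the interface at $x=L$: since the subsolution $w$ is built on $\R$ with the $g$-nonlinearity while \eqref{p} uses $f>g$ on $(0,1)$ for $x<L$, by assumption (H) the function $w$ is \emph{a fortiori} a subsolution of \eqref{p} on the region $x<L$ as well, and the $C^1$ matching at $x=L$ is inherited from $w$ being $C^1$ on $\R$; thus no extra work is needed there. I would close by citing Theorem \ref{thm:convergence} and Lemma \ref{lem:staso} to upgrade the $\liminf$ bound to genuine convergence to $1$.
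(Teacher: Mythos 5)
Your proposal reaches the right conclusion but by a genuinely different route from the paper's, and it needs two repairs. The paper never constructs a whole-line subsolution: it compares $u_\sigma$ with the solution $v$ of the auxiliary Dirichlet problem \eqref{subso} on the half-line $(r,\infty)$ with the same initial datum, cites \cite{CLZ} (Lemmas 3.1, 3.2) to get $v(t,\cdot)\to V^*$ locally uniformly, where $V^*$ solves $V''+g(V)=0$ on $(r,\infty)$ with $V^*(r)=0$, $V^*(\infty)=1$, and then concludes via Theorem \ref{thm:convergence} and Lemma \ref{lem:staso}. Because $r\geq L$, that comparison lives entirely in the region where the nonlinearity of \eqref{p} is $g$, so the interface $x=L$ and the Neumann boundary $x=0$ never enter. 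Your route---evolving the translated bump $v_\alpha$ of \eqref{vala}--\eqref{la1} under the pure $g$-equation on $\R$ and using (H) to view $w$ as a subsolution of the protected problem---is more self-contained (it replaces the appeal to \cite{CLZ} by classical bistable theory), but as written it has two gaps. First, you compare the whole-line solution $w$ with the half-line Neumann solution $u_\sigma$ without saying anything about $x=0$; you should either pass to the even extension \eqref{p-i} (where $w$ is indeed a subsolution, since $0\le w\le 1$ and $g\le f$ on $[0,1]$ wherever the nonlinearity is $f$, and $w$ is smooth so the matching at $\pm L$ is automatic), or prove $w_x(t,0)\ge 0$ by comparing $w(t,x)$ with $w(t,-x)$ on $x\ge 0$. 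Second, the step ``for $\alpha$ close to $1$ the datum exceeds the ground state $V$ in the appropriate sense'' is not correct as stated: $V>0$ on all of $\R$ while the translated $v_\alpha$ is compactly supported, so no pointwise domination of a translate of $V$ ever holds. The clean argument is the one your construction already suggests: the translated $v_\alpha$, extended by zero, is a generalized \emph{stationary} subsolution of $w_t=w_{xx}+g(w)$, so $w$ is nondecreasing in $t$, bounded by $1$, and converges to a stationary solution of $q''+g(q)=0$ on $\R$ lying above $v_\alpha$; since that limit has maximum exceeding $\theta^*$, the phase-plane classification behind Lemmas \ref{lem:ppa} and \ref{lem:staso} (constants, translates of $V$ with maximum $\theta^*$, or periodic orbits with maximum below $\theta^*$) leaves only the constant $1$. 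In particular a single $\alpha\in(\theta^*,1)$ suffices and no limit $\alpha\to1$ is needed. With these two fixes your proof is complete; the paper's version is shorter given the half-line convergence result of \cite{CLZ}, while yours avoids that external ingredient at the cost of handling the boundary and interface explicitly.
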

\begin{proof}
Let us consider the following auxiliary problem:
\begin{equation}\label{subso}
\left\{
\begin{array}{ll}
v_t = v_{xx} +g(v), & t> 0,\ x\in (r,\infty),\\
v(t,0)=0, & t>0,\\
v(0,x)=u_0(x), & x\in(r,\infty).
  \end{array}
 \right.
 \end{equation}
It follows from \cite[Lemmas 3.1,3.2]{CLZ} that the solution of problem \eqref{subso}, denoted by $v$,
satisfies
\begin{equation}\label{vto1}
\lim_{t\to\infty}v(t,x)=V^*(x)\ \ \mbox{ locally uniformly in\ $x\in[r,\infty)$},
\end{equation}
where $V^*$ is the unique solution of
\[
v''+g(v)=0<v\ \ \mbox{ in } (r,\infty),\ \ \  v(r)=0,\ \ \ v(\infty)=1.
\]
In view of the condition {\bf{(H)}}, a direct application of the comparison principle deduces that $u_\sigma(t,x)\geq v(t,x)$ for $t>0$ and $x\geq r$.
This, together with \eqref{vto1} and Theorem \ref{thm:convergence}, implies that spreading must happen for $u_\sigma$.
\end{proof}

Define
\[
\Sigma_1=\big\{\sigma>0:\ u_\sigma(t,x)\to1\ \ \mbox{locally uniformly on} \ \R_+,\ \mbox{as}\ t\to\infty\}.
\]
In the sequel, we will show that the set $\Sigma_1$ is open. That is, we have
\begin{lem}\label{lem:sd112} For any $L>0$, $\Sigma_1$ is a nonempty open interval.
\end{lem}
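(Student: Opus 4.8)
The plan is to establish the two assertions of Lemma~\ref{lem:sd112} separately: nonemptiness via a large-initial-data supersolution/subsolution comparison, and openness via continuous dependence combined with the finite-time criterion already implicit in Lemma~\ref{lem:spreadc}.

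\emph{Nonemptiness.} First I would show that spreading occurs for all sufficiently large $\sigma$. Fix any $\alpha\in(\theta^*,1)$, and let $l_\alpha$ be as in \eqref{la1}. By condition ($\Phi_2$) and ($\Phi_3$), the family $\phi_\sigma$ is monotone increasing in $\sigma$, and by ($\Phi_1$) it is continuous; moreover, since each $\phi_\sigma\in X^+$ is nontrivial for $\sigma>0$ and the family is increasing, one checks that $\phi_\sigma$ eventually dominates any prescribed bounded function on any fixed compact set as $\sigma\to\infty$. In particular, for $\sigma$ large, $\phi_\sigma\geq\alpha$ on an interval of the form $[r,r+2l_\alpha]$ with $r\geq L$ fixed in advance. (If the $\phi_\sigma$ are not a priori known to be unbounded in $\sigma$, one instead runs the evolution briefly: starting from a large-mass datum, parabolic smoothing together with the lower bound $f,g\geq -K u$ near zero produces, after a short time, a solution exceeding $\alpha$ on such an interval; then one invokes Lemma~\ref{lem:spreadc} with $u(t_0,\cdot)$ as the new initial datum.) Either way, Lemma~\ref{lem:spreadc} applies and spreading happens, so $\Sigma_1\neq\emptyset$.

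\emph{Openness.} Let $\sigma_0\in\Sigma_1$, so $u_{\sigma_0}(t,x)\to 1$ locally uniformly on $\R_+$. Fix $\alpha\in(\theta^*,1)$ and $r\geq L$. Since $u_{\sigma_0}(t,\cdot)\to 1>\alpha$ uniformly on the compact set $[r,r+2l_\alpha]$, there is $t_0>1$ with $u_{\sigma_0}(t_0,x)>\alpha$ for all $x\in[r,r+2l_\alpha]$; by continuity choose it strict with a margin, say $u_{\sigma_0}(t_0,x)\geq\alpha+2\eta$ there for some $\eta>0$. Now I would use continuous dependence of the solution of \eqref{p} on the initial datum in $L^\infty$ (or in $L^1$ against the bounded-interval support, together with the decay estimate \eqref{dequ1} controlling the far field): for $\epsilon>0$ small, $\|u_{\sigma_0+\epsilon}(t_0,\cdot)-u_{\sigma_0}(t_0,\cdot)\|_{L^\infty([0,r+2l_\alpha])}<\eta$, hence $u_{\sigma_0+\epsilon}(t_0,x)\geq\alpha$ on $[r,r+2l_\alpha]$. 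Applying Lemma~\ref{lem:spreadc} to the problem with initial time shifted to $t_0$ (which is legitimate since $u_{\sigma_0+\epsilon}(t_0,\cdot)$ is a bounded nonnegative datum meeting the hypothesis there), spreading happens for $u_{\sigma_0+\epsilon}$, so $\sigma_0+\epsilon\in\Sigma_1$. Finally, by the parabolic comparison principle and ($\Phi_2$), $\sigma\in\Sigma_1$ for every $0<\sigma\leq\sigma_0$, and in fact for every $0<\sigma<\sigma_0+\epsilon$; thus $\Sigma_1$ is an open interval $(\underline\sigma,\infty)$ for some $\underline\sigma\geq 0$ (one writes $\Sigma_1=(\sigma^*,\infty)$ with $\sigma^*:=\inf\Sigma_1$).

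\emph{Main obstacle.} The delicate point is the continuous-dependence step used in the openness argument: $\Sigma_1$ is defined through the time-asymptotic behavior, which is \emph{not} continuous in the datum, so one must convert to the finite-time spreading criterion of Lemma~\ref{lem:spreadc} before perturbing. Making that conversion clean requires (a) knowing that the convergence $u_{\sigma_0}(t,\cdot)\to 1$ is uniform on compacts (hence gives a genuine overshoot above $\theta^*$ on a fixed window at a fixed time $t_0$), and (b) a continuous-dependence estimate on $[0,t_0]$ that is uniform on the relevant spatial window — which is standard for the linearly-bounded nonlinearities here but should be stated, using \eqref{dequ1} to dispatch the unboundedness of $\R_+$. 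I expect the nonemptiness half to be routine once the monotone family's growth as $\sigma\to\infty$ is pinned down; the openness half is where care is needed.
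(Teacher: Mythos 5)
Your nonemptiness argument is essentially the paper's: bound $f,g\geq -Mu$ using the global Lipschitz property, run the equation for a short time to get a lower bound by the damped heat semigroup, and then invoke Lemma \ref{lem:spreadc}; that half is fine (modulo the same implicit assumption as the paper that large $\sigma$ provides enough mass).

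The openness half, however, contains a genuine error of direction. You perturb \emph{upward} by continuous dependence to get $\sigma_0+\epsilon\in\Sigma_1$, and then assert that ``by the parabolic comparison principle and $(\Phi_2)$, $\sigma\in\Sigma_1$ for every $0<\sigma\leq\sigma_0$.'' The comparison principle gives $u_\sigma\leq u_{\sigma_0}$ for $\sigma\leq\sigma_0$, which says nothing about spreading of the \emph{smaller} solution; if your claim were true, $\Sigma_1$ would be all of $(0,\infty)$ whenever it is nonempty, contradicting the vanishing regime of Theorem \ref{thm:dybe}(I) (Lemmas \ref{lem:vanishing} and \ref{lem:sd0}), and also contradicting your own final statement that $\Sigma_1=(\sigma^*,\infty)$. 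The two directions must be treated the other way around. For $\sigma>\sigma_0$, comparison alone suffices: $u_\sigma\geq u_{\sigma_0}$, and the limit is identified as $1$ either via Theorem \ref{thm:convergence} and Lemma \ref{lem:staso} or simply because $u_\sigma(t_0,\cdot)\geq u_{\sigma_0}(t_0,\cdot)>\alpha$ on the window, so Lemma \ref{lem:spreadc} applies; your continuous-dependence step on that side is redundant. The side that genuinely needs the finite-time criterion plus continuous dependence is the \emph{downward} one: choose $t_0$ with $u_{\sigma_0}(t_0,\cdot)>\alpha$ on $[L,L+2l_\alpha]$, and for $\epsilon>0$ small conclude the same for $u_{\sigma_0-\epsilon}(t_0,\cdot)$, hence $\sigma_0-\epsilon\in\Sigma_1$ by Lemma \ref{lem:spreadc}; this is exactly how the paper argues. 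Your remaining technical worries are minor: spreading is by definition locally uniform convergence to $1$, so the overshoot above $\theta^*$ on a fixed compact window at a fixed time is automatic, and the $L^\infty$ continuous dependence on $[0,t_0]$ follows from Gronwall's inequality with the globally Lipschitz nonlinearities on the whole half-line, without any need for \eqref{dequ1}.
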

\begin{proof} If $\sigma_1\in \Sigma_1$, then spreading happens for $\sigma=\sigma_1$. For any $\alpha\in(\theta^*,1)$
and $l_\alpha$ given in \eqref{la1},  we can find $T_1>0$ large enough such that
\begin{equation}
\label{u-v_Z}
 u_{\sigma_1}(T_1, x)>\alpha \mbox{ in } [L,L+l_\alpha],
\end{equation}
where $u_{\sigma_1}$ is the solution of \eqref{p} with $u_0=\phi_{\sigma_1}$. By the continuous dependence of the solution on initial values,
we can find a small $\epsilon>0$ such that the solution $u_{\sigma_1-\epsilon}$ of \eqref{p} with $u_0=\phi_{\sigma_1-\epsilon}$ satisfies \eqref{u-v_Z}. It then follows from Lemma \ref{lem:spreadc} that spreading happens for $u_{\sigma_1-\epsilon}$, which infers that $\sigma_1-\epsilon\in\Sigma_1$.
On the other hand, the comparison principle implies that $\sigma\in \Sigma_1$ for any $\sigma>\sigma_1$. Thus  $\Sigma_1$ is an open interval.

Next, we show that $\Sigma_1$ is non-empty. As $f,\,g$ are globally Lipschitz on $[0,\infty)$, we can find $M>0$ such that
 \[
 f(u),\  g(u)\geq -Mu,\ \ \mbox{ for all}\ u\geq 0.
 \]

Let us consider the following problem
%\begin{equation}\label{subsol1}
%\left\{
%\begin{array}{ll}
%v_t = v_{xx} +g(v), & t> 0,\ x\in [0,\infty),\\
%v(t,0)=0, & t>0,\\
%v(0,x)=\sigma\phi(x), & x\in[0,h],
%  \end{array}
% \right.
% \end{equation}
% and
\begin{equation}\nonumber
\left\{
\begin{array}{ll}
 w_t=w_{xx}-Mw, & t>0,\ x\in\R,\\
 w_x(t,0)=0, &  t>0,\\
 w(0,x)=\phi_\sigma(|x|),& x\in[-\hbar,\hbar],\\
 w(0,x)=0,& x\not\in[-\hbar,\hbar].
\end{array}
\right.
\end{equation}
Clearly, this problem admits a unique positive solution $w$. The comparison principle can be
used to deduce that, for $t\geq0,\,x\geq0$,
\[
u_\sigma(t,x)\geq w(t,x)=\int_{-\hbar}^\hbar\frac{e^{-\frac{(x-y)^2}{4t}-Mt}}{\sqrt{4\pi t}}\phi_\sigma(|y|)dy.
\]
Then for any $\alpha\in(\theta^*,1)$, we have $u_\sigma(1,x)>\alpha$ in $[L,L+2l_\alpha]$ provided that $\sigma$ is sufficiently large. This and Lemma \ref{lem:spreadc} yield that $\sigma\in\Sigma_1$. \end{proof}

\begin{lem}\label{lem:LL1}
If there exists $\bar{L}>0$ such that problem \eqref{p} with $L=\bar{L}$ has a ground state, then
problem \eqref{p} with any $L\in(0,\bar{L}]$ admits a ground state. Henceforth, problem \eqref{p} admits a ground state for any $0<L<L^*$, where $L^*$ is defined in \eqref{L8}.
\end{lem}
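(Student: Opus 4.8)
The plan is to recast ground states of \eqref{U} through a phase-plane shooting parametrized by the height $\beta=U(0)$, and then to show that the set of admissible protection-zone lengths is an interval of the form $(0,L^*)$ or $(0,L^*]$. If $U$ is a ground state of \eqref{U} with protection zone $[0,L]$, then Lemma \ref{lem:staso} and \eqref{Utot} give $\beta:=U(0)\in(0,\theta^*)$; since $U$ is decreasing, the orbit $(U,U')$ over $(0,L]$ runs along the lower branch of $\Gamma_\beta$ issuing from $(\beta,0)$, and at $x=L$ it has arrived on $\Gamma_0$, because $U(\cdot)=V(\cdot-z)$ for $x>L$ and the orbit of $V$ is exactly $\Gamma_0$. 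Conversely, fix any $\beta\in(0,\theta^*)$: by Lemma \ref{lem:ppa} the curves $\Gamma_0$ and $\Gamma_\beta$ meet at exactly two points, which by the reflection symmetry $p\mapsto-p$ are $(q_\beta,\pm p_\beta)$ with $0<q_\beta<\beta$ and $p_\beta>0$. Solving $U''+f(U)=0$, $U(0)=\beta$, $U'(0)=0$ up to the time $L(\beta)$ at which the orbit reaches $(q_\beta,-p_\beta)$, and then continuing with $U''+g(U)=0$ down the descending branch of $\Gamma_0$ towards the origin, produces a positive, decreasing, $C^1$ function with $U(+\infty)=0$, that is, a ground state of \eqref{U} with $L=L(\beta)$, where
\[
L(\beta)=\int_{q_\beta}^{\beta}\frac{dq}{\sqrt{2\int_q^\beta f(v)\,dv}} .
\]
This integral is finite and positive, since the singularity at $q=\beta$ is integrable and $q_\beta<\beta$. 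Hence \eqref{U} admits a ground state for a given $L$ if and only if $L$ lies in the range of the map $\beta\mapsto L(\beta)$.

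I would then record two properties of $L\colon(0,\theta^*)\to(0,\infty)$. First, $L$ is continuous: by Lemma \ref{lem:ppa} the intersection of $\Gamma_0$ and $\Gamma_\beta$ is unique modulo the reflection $p\mapsto-p$, so any subsequential limit of $q_{\beta_n}$ as $\beta_n\to\beta_0\in(0,\theta^*)$ is forced to equal $q_{\beta_0}$; thus $\beta\mapsto q_\beta$ is continuous, and $L(\beta)$ then varies continuously with $\beta$ because the integrand is dominated near $q=\beta$ by an integrable function uniformly in $\beta$. Second, $L(\beta)\to0$ as $\beta\uparrow\theta^*$: here $\Gamma_\beta\to\Gamma_{\theta^*}$, and by the $\beta=\theta^*$ case of Lemma \ref{lem:ppa} the two symmetric intersection points must coalesce at the single point $(\theta^*,0)$, so $q_\beta\uparrow\theta^*$; since $f\ge c_0>0$ on a neighbourhood of $\theta^*$, one has $2\int_q^\beta f(v)\,dv\ge c_0(\beta-q)$ there, whence $L(\beta)\le C\sqrt{\beta-q_\beta}\to0$.

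The lemma now follows from the intermediate value theorem. Suppose \eqref{U} with $L=\bar L$ admits a ground state, and write $\bar L=L(\bar\beta)$ for some $\bar\beta\in(0,\theta^*)$. On $[\bar\beta,\theta^*)$ the map $L$ is continuous with $L(\bar\beta)=\bar L$ and $L(\beta)\to0$ as $\beta\uparrow\theta^*$, so every $c\in(0,\bar L)$ is attained: there is $\beta_c\in(\bar\beta,\theta^*)$ with $L(\beta_c)=c$, and the shooting construction upgrades $\beta_c$ to a ground state of \eqref{U} with $L=c$, while the value $c=\bar L$ is realized by $\bar\beta$ itself. This proves the first assertion. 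For the remaining claim, let $L\in(0,L^*)$; by \eqref{L8} there is $L_0>L$ for which \eqref{U} has a ground state, and applying the first assertion with $\bar L=L_0$ gives a ground state of \eqref{U} with the prescribed $L$.

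The one point demanding care is the degenerate geometry as $\beta\uparrow\theta^*$: one must use Lemma \ref{lem:ppa} at $\beta=\theta^*$ to be certain that the two intersection points of the nearby $\Gamma_\beta$ with $\Gamma_0$ collapse onto $(\theta^*,0)$ (so that $q_\beta\uparrow\theta^*$), and one must promote the ``uniqueness modulo reflection'' provided by Lemma \ref{lem:ppa} to genuine continuity of $\beta\mapsto q_\beta$, hence of $L$. The remaining ingredients are standard phase-plane analysis and elementary ODE estimates.
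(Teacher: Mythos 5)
Your argument is correct, but it is genuinely different from the paper's. You work entirely on the stationary problem: you parametrize would-be ground states by the height $\beta=U(0)\in(0,\theta^*)$, use Lemma \ref{lem:ppa} and \eqref{Utot} to identify the unique switching point $(q_\beta,-p_\beta)\in\Gamma_\beta\cap\Gamma_0$ in the lower half plane, obtain the explicit time map $L(\beta)=\int_{q_\beta}^{\beta}\bigl(2\int_q^\beta f(v)\,dv\bigr)^{-1/2}dq$, and conclude by continuity of $\beta\mapsto q_\beta$, the coalescence $q_\beta\to\theta^*$ forcing $L(\beta)\to0$ as $\beta\uparrow\theta^*$, and the intermediate value theorem that the set of admissible lengths contains $(0,\bar L]$ whenever it contains $\bar L$. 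The paper instead argues indirectly through the parabolic dynamics: if some $\bar L_0\in(0,\bar L)$ admitted no ground state, then by Lemmas \ref{lem:sd0} and \ref{lem:sd112} together with Theorem \ref{thm:convergence} and Lemma \ref{lem:staso} vanishing would be impossible for $L=\bar L_0$, hence only spreading could occur there; the comparison principle and hypothesis {\bf(H)} then force spreading for every initial datum when $L=\bar L$ as well, contradicting the existence of a ground state at $\bar L$. Your route is more elementary and more informative: it needs no convergence theorem or threshold sets, it characterizes the admissible lengths exactly as the range of the continuous map $L(\cdot)$ (so in particular $L^*=\sup_{\beta\in(0,\theta^*)}L(\beta)$, which dovetails with the estimate in Proposition \ref{propr1}), and it yields a formula one could use to decide whether $L_*=L^*$. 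The paper's route is shorter given the machinery already built for Theorem \ref{thm:dybe} and illustrates the monotonicity in $L$ coming from {\bf(H)}. The only points in your write-up that deserve an explicit line are the ones you yourself flag: in proving continuity of $\beta\mapsto q_\beta$ and the limit $q_\beta\to\theta^*$ you should rule out degenerate subsequential limits at $q=0$ and $q=\beta_0$ (immediate, since $\Gamma_0$ passes through the origin while $\Gamma_{\beta_0}$ meets $q=0$ at $p=\pm\sqrt{2\int_0^{\beta_0}f}\neq0$, and $(\beta_0,0)\notin\Gamma_0$ for $\beta_0<\theta^*$), and you should note that an intersection of $\Gamma_0$ with $\Gamma_\beta$ on the $q$-axis is impossible for $\beta\in(0,\theta^*)$, so the two intersection points of Lemma \ref{lem:ppa} are indeed a reflection pair.
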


\begin{proof} Suppose by way of contradiction that there is $\bar{L}_0\in(0,\bar{L})$ such that problem \eqref{p}
with $L=\bar{L}_0$ does not have a ground state. We claim that vanishing does not happen for problem \eqref{p} with $L=\bar{L}_0$. Otherwise, by Lemmas \ref{lem:sd0} and \ref{lem:sd112}, both $\Sigma_0$ and $\Sigma_1$ are nonempty open intervals for problem \eqref{p} with $L=\bar{L}_0$. This, together with Theorem \ref{thm:convergence} and Lemma \ref{lem:staso} that
problem \eqref{p} with $L=\bar{L}_0$ has a ground state, leads to a contradiction. Thus our claim holds.

As a consequence, only spreading can happen for problem \eqref{p} with $L=\bar{L}_0$. The comparison principle, combined with the condition {\bf{(H)}}, further yields that only spreading can happen for problem \eqref{p} with $L=\bar{L}$. This clearly contradicts with our assumption that problem \eqref{p} with $L=\bar{L}$ has a ground state.

By the definition of $L^*$, problem \eqref{p} admits a ground state for any $0<L<L^*$. \end{proof}

In the sequel, we aim to give an upper-bound estimate for $L^*$, which is stated as follows.

\begin{prop}\label{propr1}
Let $L^*$ be given in \eqref{L8}. Then the following estimate holds:
\[
L^*\leq\int_0^{\theta^*}\frac{1}{\sqrt{2\int_r^{\theta^*}f(s)ds}}dr<\infty.
\]
\end{prop}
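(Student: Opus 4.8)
The plan is to turn the phase-plane picture of a ground state into an explicit formula for the width of the protection zone and then estimate it. First I would fix an arbitrary $L_0>0$ for which \eqref{U} has a ground state $U$; by Lemma \ref{lem:staso} and \eqref{Utot}, $U$ is strictly decreasing on $[0,\infty)$ with $\beta:=U(0)\in(0,\theta^*)$, and on $[0,L_0]$ it solves $U''+f(U)=0$, $U'(0)=0$. Multiplying by $U'$ and integrating gives the first integral $U'(x)^2=2\int_{U(x)}^{\beta}f(v)\,dv$ on $[0,L_0]$ (so $(U,U')$ runs along $\Gamma_\beta$ in the half-plane $U'\le 0$), and positivity of $U$ on $[0,\infty)$ forces $q_1:=U(L_0)\in(0,\beta)$ with $-U'>0$ on $(0,L_0]$.

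Next I would change variables $q=U(x)$ to obtain the exact identity
\[
L_0=\int_{q_1}^{\beta}\frac{dq}{\sqrt{2\int_q^{\beta}f(v)\,dv}}<\int_0^{\beta}\frac{dq}{\sqrt{2\int_q^{\beta}f(v)\,dv}}=l_\beta,
\]
the strict inequality coming from $q_1>0$, and $l_\beta$ being precisely the half-width of the compactly supported solution $W$ of $W''+f(W)=0$, $W(0)=\beta$ (trajectory $\Gamma_\beta$, Subsection \ref{sub:statin}). It then remains to prove $l_\beta\le l_{\theta^*}$ for $0<\beta\le\theta^*$; after the rescalings $q=\beta s$ and $v=\beta t$ one has $l_\beta=\sqrt{\beta/2}\int_0^1\big(\int_s^1 f(\beta t)\,dt\big)^{-1/2}\,ds$, and I would deduce from $f>0$ on $(0,1)$ that the time-map $\beta\mapsto l_\beta$ of $W''+f(W)=0$ is nondecreasing on $(0,\theta^*]$. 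Together these give $L_0<l_{\theta^*}$ for every admissible $L_0$, hence $L^*\le l_{\theta^*}=\int_0^{\theta^*}\frac{dr}{\sqrt{2\int_r^{\theta^*}f(s)\,ds}}$ by the definition \eqref{L8} of $L^*$. Finiteness of the bound is routine: $\int_r^{\theta^*}f>0$ on $(0,\theta^*)$ since $f>0$ on $(0,1)\supset(r,\theta^*)$, near $r=\theta^*$ it equals $f(\theta^*)(\theta^*-r)(1+o(1))$ with $f(\theta^*)>0$ so the integrand is $O\big((\theta^*-r)^{-1/2}\big)$ and integrable, and it stays bounded as $r\to0$ because $\int_0^{\theta^*}f>0$.

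The hard part will be the step $l_\beta\le l_{\theta^*}$. While $L_0<l_\beta$ is immediate from the formula for $L_0$, controlling $l_\beta$ by $l_{\theta^*}$ is not a matter of comparing integrands pointwise: for a fixed level $q$ the smaller-amplitude orbit is \emph{slower} ($|W'|=\sqrt{2\int_q^\beta f}$ is smaller), so it contributes \emph{more} to the width, and this must be balanced against the correspondingly shorter range of integration $[q_1,\beta]\subset[0,\theta^*]$. Thus one genuinely needs the monotonicity of the time-map of $W''+f(W)=0$ on the amplitude interval $(0,\theta^*]$ — this is the one place where a little care (or an additional structural hypothesis on $f$) is required, whereas all the other steps are elementary phase-plane bookkeeping.
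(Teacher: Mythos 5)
Your reduction is correct as far as it goes: for a ground state $U$ with $L=L_0$, Lemma \ref{lem:staso} and \eqref{Utot} give $\beta:=U(0)\in(0,\theta^*)$, the first integral on $[0,L_0]$ is exactly as you write, and the change of variables yields the identity $L_0=\int_{q_1}^{\beta}\bigl(2\int_q^{\beta}f(v)\,dv\bigr)^{-1/2}dq<l_\beta$ with $q_1=U(L_0)>0$; the finiteness of $\int_0^{\theta^*}\bigl(2\int_r^{\theta^*}f\bigr)^{-1/2}dr$ is also argued correctly (it is \eqref{q-eq} in the paper). The genuine gap is the step you yourself flag as the hard part: $l_\beta\le l_{\theta^*}$ for $\beta\in(0,\theta^*]$. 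This cannot be ``deduced from $f>0$ on $(0,1)$.'' In your rescaled formula $l_\beta=\sqrt{\beta/2}\int_0^1\bigl(\int_s^1 f(\beta t)\,dt\bigr)^{-1/2}ds$ the prefactor increases with $\beta$ while the inner integral may increase or decrease, and nothing in \eqref{mono} or (H) controls this competition: no concavity of $f$ and no monotonicity of $f(u)/u$ is assumed. Time maps of general monostable nonlinearities are not monotone, and the inequality you need can actually fail under the paper's hypotheses: on $(0,\theta)$ condition (H) only requires $f>0>g$, so $f$ may be taken extremely small on a long subinterval of $(0,\theta)$ ending at some $\beta_0<\theta^*$ while being of order one on $(\theta,\theta^*)$; then $\int_q^{\beta_0}f$ is tiny throughout that subinterval and $l_{\beta_0}$ (indeed even the truncated time $\int_{q_1}^{\beta_0}$, since for $|g|$ small on $(0,\theta)$ the matching level $q_1$ sits deep inside the slow region) becomes large, whereas $l_{\theta^*}$ stays bounded because $\int_q^{\theta^*}f$ always contains the order-one mass of $f$ on $(\theta,\theta^*)$. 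So your route requires an additional structural hypothesis (for instance $u\mapsto f(u)/u$ nonincreasing, the classical condition for a monotone time map), which the paper does not impose; as written the proposal stops exactly at its load-bearing step.

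The paper's own proof takes a different route that never compares $l_\beta$ with $l_{\theta^*}$ for $\beta<\theta^*$. It sets $L^0=l_{\theta^*}$, notes that for $l>L^0$ the auxiliary Neumann--Dirichlet problem \eqref{prob-q} admits a positive solution with amplitude above $\theta^*$ (for which only $l_{\theta^*}=L^0$ and $l_\beta\to\infty$ as $\beta\to1^-$ are relevant), and then argues dynamically: the solution of the parabolic problem \eqref{subp-q1} converges to such a stationary state, the comparison principle pushes the solution of \eqref{p} above it on $[0,L]$, and since every ground state satisfies $\|U\|_{L^\infty}<\theta^*$ by \eqref{Utot}, Theorem \ref{thm:convergence} forces spreading, which is then used to exclude ground states for $L>L^0$ and hence to bound $L^*$. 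Thus the paper's mechanism is a parabolic comparison argument, not a time-map monotonicity estimate; if you wish to keep your phase-plane approach you must either add a hypothesis ensuring $l_\beta\le l_{\theta^*}$ or switch to this dynamical argument. (Your instinct that some extra structure of $f$ is being used is not baseless: the paper's ``standard analysis'' step, namely uniqueness of the positive solution of \eqref{prob-q} and convergence of \eqref{subp-q1} to it for every nontrivial datum, is itself delicate for a general monostable $f$ --- but that is a different issue from the inequality your outline hinges on.)
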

\begin{proof}
Let us consider the following auxiliary problem:
\begin{equation}\label{prob-q}
\left\{
\begin{array}{ll}
q'' +f(q)=0,\ q(x)>0, \ \  x\in[0,l),\\
q'(0)=q(l)=0.
\end{array}
\right.
\end{equation}
It is well known that a positive solution of problem \eqref{prob-q} (if exists) must be unique and be decreasing on $[0,l]$.

Using $q'$ to denote $\frac{dq}{dx}$, we can rewrite the first equation in \eqref{prob-q} into the equivalent form
\begin{equation}\label{eq-qp}
q'=p,\ \ \ p'=-f(q),
\end{equation}
or,
\begin{equation}\label{pq}
\frac{dp}{dq} = - \frac{f(q)}{p},\quad \mbox{when } p\not= 0.
\end{equation}
The positive solution of \eqref{pq} with
$p(q)|_{q=\theta^*} =0$ is given explicitly by
\begin{equation}\label{p0-left}
p(q) = -\sqrt{2\int_q^{\theta^*} f(s)ds}, \quad \mbox{for }
q\in [0,\theta^*].
\end{equation}

The positive solution $p(q)$ ($q\in [0, \theta^*]$) corresponds to a trajectory $(q(x),p(x))$ of \eqref{eq-qp}
that connects $(\theta^*, 0)$ and $(0,-\omega_0)$ in the $qp$-plane, where $\omega_0=\sqrt{2\int_0^{\theta^*}f(s)ds}$.
In order to give a solution $q$ of \eqref{prob-q}, we may assume that it passes through $(0,-\omega_0)$ at $l\in (0,\infty]$
and approaches $(\theta^*,0)$ as $x$ goes to $0$. Then using \eqref{eq-qp} and \eqref{p0-left}
we can easily deduce
\begin{equation}\label{q-eq}
L^0:=\int_0^{\theta^*} \frac{1}{\sqrt{2 \int_r^{\theta^*} f(s) ds}}dr
= \int_0^{\theta^*} \frac{1}{\sqrt{\omega_0^2 - 2\int_0^r f(s) ds}}dr<\infty,
\end{equation}
which means that when $l=L^0$, problem \eqref{prob-q} admits a unique positive solution $q(x)$ with $q(0)=\theta^*$.

By a similar argument as above, we can prove that when $l>L^0$, problem \eqref{prob-q} also has a unique positive solution $q_{l}(x)$ satisfying $q_l(0)>\theta^*$. Moreover, it is easy to check that if $L^0\leq l_1< l_2$, then $q_{l_1}(x)< q_{l_2}(x)$ for $x\in[0,l_1]$.

Now we prove $L^*\leq L^0$ by contradiction. Suppose that $L^*> L^0$. For any fixed $L>L^0$, consider the following problem
\begin{equation}\label{subp-q1}
\left\{
\begin{array}{ll}
v_t=v_{xx} +f(v), &  t>0,\ x\in (0,L),\\
v_x(t,0)=v(t, L)=0, & t>0,\\
v(0,x)=v_0(x), & x\in [0,L].
\end{array}
\right.
\end{equation}
Some standard analysis shows that for any $v_0(x)\geq,\not\equiv 0$, then the unique positive solution $v(t,x)$ of
\eqref{subp-q1} satisfies
\begin{equation}\label{vtoq}
\|v(t,\cdot)- q_L(\cdot)\|_{L^\infty([0, L])} \to 0,\quad \mbox{as }\, t\to\infty,
\end{equation}
where $q_L$ is the unique positive solution of \eqref{prob-q} with $l=L$. The comparison principle implies that the solution $u(t,x)$
of \eqref{p} where the initial datum $u_0\in X^+$ can be chosen so that $u_0(x)\geq v_0(x)\geq,\not\equiv 0$ in $[0,L]$ satisfies
\[
u(t,x)\geq v(t,x),\ \ \mbox{ for all}\ t>0,\ x\in[0,L].
\]
Combining this, the fact that $q_L(0)>\theta^*$ with $L>L^0$, and  Lemma \ref{lem:staso}, we see that $u(t,x)\to 1$ locally
uniformly in $[0,\infty)$ as $t\to\infty$, which means that only spreading can happen for $u$. That is, problem \eqref{p} does
not have a ground state for any $L>L^0$. This contradicts the definition of $L^*$, and so $L^*\leq L^0$. Thus the
proof is complete.
\end{proof}

We end this subsection with the following spreading result.
\begin{lem}\label{lem:spread}
Let $L^*$ be given in \eqref{L8}.  Then spreading must happen for problem \eqref{p} with $L>L^*$.
\end{lem}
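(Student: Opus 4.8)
The plan is to rule out all alternatives to spreading using the tools already assembled. By Theorem \ref{thm:convergence}, every solution of \eqref{p} with $u_0\in X^+$ converges locally uniformly to a constant stationary solution ($0$ or $1$) or to a ground state of \eqref{U}. So it suffices to show that, when $L>L^*$, neither vanishing nor transition to a ground state can occur, for any initial datum $\phi_\sigma$. First I would dispose of vanishing: since $L>L^*\ge L_*$ (using $L_*\le L^*$), Lemma \ref{lem:sd0}(ii) gives that $\Sigma_0=\emptyset$, so vanishing never happens. The remaining task is to rule out convergence to a ground state, and this is where the definition \eqref{L8} of $L^*$ as a supremum does the work: by \eqref{L8}, for $L>L^*$ problem \eqref{U} has \emph{no} ground state at all, hence the limit in Theorem \ref{thm:convergence} cannot be a ground state. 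Therefore the only remaining possibility is $u(t,x)\to 1$ locally uniformly, i.e.\ spreading.

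The one point needing care is that the above argument shows spreading \emph{or} that no ground state exists could in principle fail if $L>L^*$ still admitted a ground state; but $L^*$ is precisely the supremum of lengths for which a ground state exists, and Lemma \ref{lem:LL1} shows the set of such lengths is an interval $(0,L^*)$ (or $(0,L^*]$), so for $L>L^*$ no ground state exists. Hence I would simply cite \eqref{L8} together with Lemma \ref{lem:LL1} to conclude that \eqref{U} with this $L$ has no ground state. Combining this with $\Sigma_0=\emptyset$ from Lemma \ref{lem:sd0}(ii) and the trichotomy of limits from Theorem \ref{thm:convergence} and Lemma \ref{lem:staso} (the limit is $0$, $1$, or a ground state; $0$ and ground states are excluded), we are forced to $u\to 1$, i.e.\ spreading, for every $\sigma>0$.

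I do not expect a serious obstacle here: the lemma is essentially a bookkeeping corollary of the convergence theorem, the structure result Lemma \ref{lem:staso}, the emptiness of $\Sigma_0$ for $L>L_*$, and the definition of $L^*$. The only thing one should double-check is the inequality $L>L^*\ge L_*$, which is guaranteed by the hypothesis $L_*\le L^*$ stated in the setup, so that Lemma \ref{lem:sd0}(ii) is applicable. If one wanted a more self-contained argument avoiding reliance on Lemma \ref{lem:LL1}, an alternative is to invoke Proposition \ref{propr1}: since $L>L^*$ and a ground state restricted to $x>L$ must coincide with a translate of $V$ whose trajectory is $\Gamma_0$, the interior piece on $[0,L]$ would force, via the phase-plane picture and the bound in Proposition \ref{propr1}, a contradiction with $L>L^0\ge L^*$; but citing \eqref{L8} directly is cleaner.
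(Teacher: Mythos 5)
Your argument is correct and is essentially identical to the paper's proof: the paper likewise rules out vanishing via Lemma \ref{lem:sd0}(ii) (using $L>L^*\geq L_*$) and rules out transition because, by the definition \eqref{L8} of $L^*$, no ground state exists for $L>L^*$, so Theorem \ref{thm:convergence} and Lemma \ref{lem:staso} leave spreading as the only possibility. The extra appeal to Lemma \ref{lem:LL1} is harmless but unnecessary, since $L>L^*$ already excludes a ground state directly from the supremum definition.
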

\begin{proof}
Since $L^* \geq L_*$, then vanishing does not happen for problem \eqref{p}
with $L>L^*$ due to Lemma \ref{lem:sd0}(ii). The definition of $L^*$, together with
Theorem \ref{thm:convergence} and Lemma \ref{lem:staso}, implies that only spreading can happen for problem \eqref{p} with
$L>L^*$, which completes the proof of this lemma.
\end{proof}

\subsection{Proof of Theorem \ref{thm:dybe}}
Based on the preparation of the previous subsections,
we are now ready to give

\smallskip

\noindent
 {\bf Proof of Theorem \ref{thm:dybe}:}\ (I) When $0<L<L_*$, it follows from Lemmas \ref{lem:sd0} and \ref{lem:sd112}
 that $\Sigma_0$ is an open interval $(0,\sigma_*)$ with $\sigma_*:=\sup \Sigma_0\in(0,\infty)$, and that
 $\Sigma_1$ is an open interval $(\sigma^*,\infty)$ with $\sigma^*:=\inf \Sigma_1\in(0,\infty)$.
 Clearly, $\sigma_*\leq \sigma^*$. Then, Theorem \ref{thm:convergence} and Lemma
 \ref{lem:staso} allows us to assert that each solution $u_\sigma(t,x)$ with $\sigma\in [\sigma_*, \sigma^*]$  is a transition one.

\smallskip

(II) Let us assume that $L_*<L<L^*$. It follows from Lemma \ref{lem:sd0} that $\Sigma_0$ is empty in this case, which means
 that vanishing does not happen for any $\sigma>0$. On the other hand, from Lemma \ref{lem:sd112}, it is known that $\Sigma_1$ is
a nonempty open interval $(\sigma^*,\infty)$ with $\sigma^*:=\inf \Sigma_1$. In addition, by Lemma \ref{lem:LL1}, problem \eqref{p} admits a ground state for any $0<L<L^*$. This, combined with Theorem \ref{thm:convergence}, shows that each solution $u(t,x)$ with $\sigma\in (0, \sigma^*]$ is a transition one.

 \smallskip

(III) The conclusion for the case $L>L^*$ follows from Lemma \ref{lem:spread} immediately.

 \smallskip
The whole proof of Theorem \ref{thm:dybe} is thus complete. {\hfill $\Box$}

\section{A Separate Protection Zone: proof of Theorem \ref{thm:dybe1}}\label{sect:multiple case}
In this section, we deal with system \eqref{q} and establish its dynamics (i.e., Theorem \ref{thm:dybe1})
in the same spirit as that of proving Theorem \ref{thm:dybe}.

We shall begin with the following eigenvalue problem:
\begin{equation}\label{eigen-q}
\left\{
 \begin{array}{ll}
 - \varphi'' -f'(0)\varphi =\lambda \varphi, & x \in(L_1, L_2),\\
 - \varphi'' -g'(0)\varphi =\lambda \varphi, & x\in(0, L_1)\cup(L_2,\infty),\\
 \varphi'(0)=\varphi(\infty)=0, \\
 \varphi(L_i-0) = \varphi( L_i+0), & i=1, 2, \\
 \varphi'(L_i-0) = \varphi'( L_i+0), & i=1, 2.
 \end{array}
 \right.
\end{equation}

As in Subsection 3.1, we consider the eigenvalue problem on $\R$ associated with \eqref{eigen-q}:
 \begin{equation}\label{eigen-q1}
 \left\{
 \begin{array}{ll}
 - \varphi'' +\tilde h(x)\varphi =\lambda \varphi, & -\infty<x<\infty,\\
 \varphi(-\infty) = \varphi(\infty)=0,
 \end{array}
 \right.
 \end{equation}
where
 $$
\tilde h(x) = \left\{
\begin{array}{ll}
     -f'(0),\ \ & x\in[L_1, L_2]\cup[-L_2, -L_1],\\
      -g'(0), \ \ & x\in\R\setminus{([L_1, L_2]\cup[-L_2, -L_1])}.
\end{array}
\right.
$$

Then, it is well known that \eqref{eigen-q1} and \eqref{eigen-q} have the same principal eigenvalue,
denoted by $\tilde \lambda_1 (L)$. Furthermore, by \cite[Proposition 6.11]{BHR} (or \cite[Theorem 4.1]{BR}),
it holds that
 \begin{equation}\nonumber
 \mbox{$\tilde\lambda_1^R(L)$\ is decreasing in\ $R>0$}\ \, \mbox{and}\ \,\lim_{R\to\infty}\tilde\lambda_1^R(L)\leq\tilde\lambda_1 (L),
 \end{equation}
where $\tilde\lambda_1^R(L)$ is the principal eigenvalue of
 \begin{equation}\nonumber
 \left\{
 \begin{array}{ll}
 - \varphi'' +\tilde h(x)\varphi =\lambda \varphi, & -R<x<R,\\
 \varphi(-R) = \varphi(R)=0.
 \end{array}
 \right.
 \end{equation}

We have the following result.
\begin{lem}\label{lem:1eigenvaluemp2}
For any given $0<L_1<L_2$, let $L=L_2-L_1$ and $\tilde{\lambda}_1(L)$
be the principal eigenvalue of \eqref{eigen-q}. Then we have
\[
\tilde{\lambda}_1(L)\in(-f'(0),-g'(0)),
\]
and
\[
L=\frac{1}{\theta_2}\Big\{ \arctan\Big[\frac{\theta_1}{\theta_2}\cdot\frac{e^{\theta_1 L_1}
-e^{-\theta_1 L_1}}{e^{\theta_1 L_1}+e^{-\theta_1 L_1}}\Big]+  \arctan \frac{\theta_1}{\theta_2}\Big\},
\]
where
\[
\theta_1=\sqrt{-(g'(0)+\tilde{\lambda}_1(L))},\ \ \ \theta_2=\sqrt{f'(0)+\tilde{\lambda}_1(L)}.
\]
Moreover, $\tilde{\lambda}_1(L)$ is decreasing with respect to $L>0$, and there exists a unique
$\tilde{L}_*>L_*$ such that $\tilde{\lambda}_1(L)<0$ if $L>\tilde{L}_*$, $\tilde{\lambda}_1(L)=0$ if $L=\tilde{L}_*$, and $\tilde{\lambda}_1(L)>0$ if $0<L<\tilde{L}_*$.
\end{lem}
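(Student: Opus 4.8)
The plan is to mirror the proof of Lemma~\ref{lem:1eigenvalue}, now adapted to the three-region structure of \eqref{eigen-q} on $[0,\infty)$: the outer intervals $[0,L_1]$ and $[L_2,\infty)$ carry the potential $-g'(0)$, while the inner interval $[L_1,L_2]$ carries $-f'(0)$. As in that lemma I would first use the equivalence of the principal eigenvalues of \eqref{eigen-q} and \eqref{eigen-q1}. To locate $\tilde\lambda_1(L)$, I would run the same four-step contradiction argument as in Lemma~\ref{lem:1eigenvalue}, with the role of $\{x>L\}$ played by $\{x>L_2\}$ and that of $[0,L]$ played by $[L_1,L_2]$; the only genuinely new point is the behaviour on $[0,L_1]$: once it is known that $g'(0)+\tilde\lambda_1(L)<0$, the equation $-\varphi''=(g'(0)+\tilde\lambda_1(L))\varphi$ together with $\varphi'(0)=0$ and $\varphi>0$ forces $\varphi$ to be a positive multiple of $\cosh(\theta_1 x)$ with $\theta_1=\sqrt{-(g'(0)+\tilde\lambda_1(L))}$, so that $\varphi'(L_1-0)/\varphi(L_1-0)=\theta_1\tanh(\theta_1 L_1)\ge 0$. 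This non-negative matching slope at $x=L_1$ is exactly what keeps the sign contradictions of Steps~3 and~4 intact (in each case one compares the slope entering $[L_1,L_2]$ with the negative slope of the forced decaying exponential on $[L_2,\infty)$). This yields $\tilde\lambda_1(L)\in(-f'(0),-g'(0))$.

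With $-f'(0)<\tilde\lambda_1(L)<-g'(0)$ established, I would solve \eqref{eigen-q} explicitly. Writing $\theta_2=\sqrt{f'(0)+\tilde\lambda_1(L)}>0$, one has $\varphi(x)=c_0\cosh(\theta_1 x)$ on $[0,L_1]$, $\varphi(x)=A\cos(\theta_2(x-L_1))+B\sin(\theta_2(x-L_1))$ on $[L_1,L_2]$, and $\varphi(x)=c_\infty e^{-\theta_1(x-L_2)}$ on $[L_2,\infty)$ (the last forced by $\varphi(\infty)=0$). Matching the logarithmic derivative at $x=L_1$ gives $B/A=r:=(\theta_1/\theta_2)\tanh(\theta_1 L_1)$, and matching at $x=L_2$ gives $\tan(\theta_2 L)=(\theta_1+r\theta_2)/(\theta_2-\theta_1 r)$. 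Dividing numerator and denominator by $\theta_2$ and invoking $\tan(\alpha+\beta)=\frac{\tan\alpha+\tan\beta}{1-\tan\alpha\tan\beta}$ with $\tan\alpha=\theta_1/\theta_2$ and $\tan\beta=r$, together with the observation that positivity of the principal eigenfunction on $[L_1,L_2]$ (checked by writing $\varphi$ there in amplitude--phase form) selects the branch $\theta_2 L=\alpha+\beta\in(0,\pi)$, gives exactly the claimed identity $L=\frac{1}{\theta_2}\big\{\arctan\big[\frac{\theta_1}{\theta_2}\cdot\frac{e^{\theta_1 L_1}-e^{-\theta_1 L_1}}{e^{\theta_1 L_1}+e^{-\theta_1 L_1}}\big]+\arctan\frac{\theta_1}{\theta_2}\big\}$.

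For the monotonicity and the threshold I would view the right-hand side of that identity (with $L_1$ held fixed) as a function $\mathcal L$ of the variable $\lambda\in(-f'(0),-g'(0))$, through $\theta_1=\theta_1(\lambda)=\sqrt{-g'(0)-\lambda}$ and $\theta_2=\theta_2(\lambda)=\sqrt{f'(0)+\lambda}$. As $\lambda$ increases, $\theta_1$ decreases and $\theta_2$ increases, hence $1/\theta_2$, $\theta_1/\theta_2$, $\tanh(\theta_1 L_1)$ and $r=(\theta_1/\theta_2)\tanh(\theta_1 L_1)$ all decrease; since $\arctan$ is increasing, the bracket $\arctan(r)+\arctan(\theta_1/\theta_2)$ decreases, so $\mathcal L$ is a product of two positive decreasing functions and is therefore strictly decreasing on $(-f'(0),-g'(0))$. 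Moreover $\mathcal L(\lambda)\to0$ as $\lambda\to-g'(0)^-$ (both arctan arguments tend to $0$) and $\mathcal L(\lambda)\to+\infty$ as $\lambda\to-f'(0)^+$ (both arctans tend to $\pi/2$ while $1/\theta_2\to\infty$). Hence $\mathcal L$ is a decreasing bijection from $(-f'(0),-g'(0))$ onto $(0,\infty)$, whose inverse is precisely $L\mapsto\tilde\lambda_1(L)$; this gives both the well-definedness of $\tilde\lambda_1(L)$ in that range and its strict decrease in $L$. Setting $\tilde L_*:=\mathcal L(0)$ then yields $\tilde\lambda_1(L)>0$, $=0$, $<0$ according as $L<\tilde L_*$, $L=\tilde L_*$, $L>\tilde L_*$. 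Finally, evaluating at $\lambda=0$ gives $\tilde L_*=\frac{1}{\sqrt{f'(0)}}\big[\arctan\big(\sqrt{-g'(0)/f'(0)}\,\tanh(\sqrt{-g'(0)}\,L_1)\big)+\arctan\sqrt{-g'(0)/f'(0)}\big]$, which exceeds $L_*=\frac{1}{\sqrt{f'(0)}}\arctan\sqrt{-g'(0)/f'(0)}$ because $L_1>0$ makes the first arctan term strictly positive.

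The main obstacle, as I see it, is the branch bookkeeping in the second step: making sure that the value of $\theta_2 L$ produced by the tangent addition formula really corresponds to the \emph{positive} principal eigenfunction rather than to an excited state, which forces one to check that the phase of the trigonometric piece on $[L_1,L_2]$ never reaches $\pm\pi/2$; with the amplitude--phase computation this reduces to the inequality $\theta_2 L-\arctan(r)=\arctan(\theta_1/\theta_2)<\pi/2$, which is automatic. The location step is otherwise routine once one notices the single adaptation noted above, and the monotonicity/threshold step is elementary once the identity of Step~2 is in hand; as an alternative one could obtain the strict decrease of $\tilde\lambda_1(L)$ variationally from $\tilde\lambda_1(L)<-g'(0)$ and the pointwise decrease of $\tilde h$ as $L$ grows, but the direct argument via $\mathcal L$ is shorter.
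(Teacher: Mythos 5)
Your proposal is correct and follows essentially the same route as the paper's proof: confine $\tilde\lambda_1$ to $(-f'(0),-g'(0))$ by adapting the four-step argument of Lemma \ref{lem:1eigenvalue}, solve the eigenfunction explicitly in the three regions, match logarithmic derivatives at $x=L_1$ and $x=L_2$ to obtain the stated identity, and then read off the monotonicity, the threshold $\tilde L_*$, and $\tilde L_*>L_*$ from that identity. The only (harmless) presentational differences are that the paper fixes the phase on $(L_1,L_2)$ via the unique interior zero $a$ of $\varphi'$ rather than your tangent-addition/amplitude--phase bookkeeping, and it pins down the range and monotonicity of $\tilde\lambda_1$ by computing the limits as $L_2\to L_1$ and $L_2\to\infty$, whereas you note more cleanly that $\lambda\mapsto\mathcal{L}(\lambda)$ is a strictly decreasing bijection of $(-f'(0),-g'(0))$ onto $(0,\infty)$ and invert it.
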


\begin{proof} Let us set $\tilde{\lambda}_1=\tilde{\lambda}_1(L)$ for simplicity. The similar analysis as that of Lemma \ref{lem:1eigenvalue}
shows that $\tilde{\lambda}_1\in(-f'(0),-g'(0))$ for any $0<L_1<L_2<\infty$.

Since $\tilde{\lambda}_1\in(-f'(0),-g'(0))$, then
\begin{equation}\label{thee}
0<\theta_i<\sqrt{f'(0)-g'(0)},\ \ \mbox{ for }\, i=1,2.
\end{equation}

For $x\in[0,L_1)$, it follows from the second equation of  \eqref{eigen-q} that
\[
-\varphi''=(\tilde{\lambda}_1+g'(0))\varphi\ \ \mbox{with}\  g'(0)+\tilde{\lambda}_1<0,
\]
which implies that there are two constant $\tilde{C}_1$ and $\tilde{C}_2$ such that
\[
\varphi(x)=\tilde{C}_1e^{\theta_1 x}+\tilde{C}_2e^{-\theta_1 x},\ \ \forall x\in[0,L_1).
\]
This, together with $\varphi'(0)=0$, yields that $\tilde{C}_1=\tilde{C}_2>0$, and so for  $x\in[0,L_1)$,
\[
\varphi(x)=\tilde{C}_1\big(e^{\theta_1 x}+e^{-\theta_1 x}\big),
\]
and
\[
\varphi'(x)=\tilde{C}_1\theta_1 \big(e^{\theta_1 x}-e^{-\theta_1 x}\big)>0.
\]
Hence we have
\begin{equation}\label{eeqq1}
\frac{\varphi'(L_1-0)}{\varphi(L_1-0)}=\theta_1\cdot\frac{e^{\theta_1 L_1}
-e^{-\theta_1 L_1}}{e^{\theta_1 L_1}+e^{-\theta_1 L_1}}.
\end{equation}
Similarly, for $x\in(L_2,\infty)$, there are two constant $\tilde{C}_3$ and $\tilde{C}_4$ such that
\[
\varphi(x)=\tilde{C}_3e^{\theta_1 x}+\tilde{C}_4e^{-\theta_1 x},\  \ \forall x\in(L_2,\infty).
\]
Using $\varphi(\infty)=0$, we infer that $\tilde{C}_4>0=\tilde{C}_3$, and so for $x\in(L_2,\infty)$,
\[
\varphi(x)=\tilde{C}_4e^{-\theta_1 x},
\]
and
\[
\varphi'(x)=-\tilde{C}_4\theta_1 e^{-\theta_1 x}<0.
\]
Then we obtain
\begin{equation}\label{eeqq2}
\frac{\varphi'(L_2+0)}{\varphi(L_2+0)}=-\theta_1.
\end{equation}

Since $\varphi'(L_i+0)=\varphi'(L_i-0)$ $(i=1,2)$, it follows that $\varphi'(L_1+0)>0>\varphi'(L_2-0)$.
Therefore, there is a constant $a\in(L_1,L_2)$ such that $\varphi'(a)=0$.

We further claim that there exists a unique $a\in(L_1,L_2)$ fulfilling $\varphi'(a)=0$. Otherwise there are two constants $a_i\in (L_1,L_2)$ with $a_1<a_2$ satisfying $\varphi'(a_i)=0$ $(i=1,2)$. It follows from the first equation of  \eqref{eigen-q} that
\[
-\varphi''=(f'(0)+\tilde{\lambda}_1)\varphi, \ \ \forall x\in[a_1,a_2].
\]
Integrating the above equation on $[a_1,a_2]$, we deduce that
\[
(f'(0)+\tilde{\lambda}_1)\int_{a_1}^{a_2}\varphi(x)dx=0,
\]
which gives $f'(0)+\tilde{\lambda}_1=0$, a contradiction!

Now, when $x\in(L_1,L_2)$, we get from the second equation of \eqref{eigen-q} that
\[
-\varphi''=(f'(0)+\tilde{\lambda}_1)\varphi\  \ \mbox{with}\ \, f'(0)+\tilde{\lambda}_1>0.
\]
Similarly, we can find two constants $\tilde{C}_5$ and $\tilde{C}_6$  such that
\[
\varphi(x)=\tilde{C}_5\cos [\theta_2(x-a)] +\tilde{C}_6\sin [\theta_2(x-a)], \ \ \forall x\in[L_1,L_2].
\]
Since $\varphi'(a)=0$ and $\varphi>0$ for $x\geq 0$, then $\tilde{C}_5>0=\tilde{C}_6$. In turn, it holds
\begin{equation}\label{pt2}
\varphi(x)=\tilde{C}_5\cos [\theta_2(x-a)], \ \ \forall x\in[L_1,L_2].
\end{equation}
Moreover, basic computation gives that
\[
\frac{\varphi'(L_1+0)}{\varphi(L_1+0)}=-\theta_2\tan[\theta_2(L_1-a)],\ \
\frac{\varphi'(L_2-0)}{\varphi(L_2-0)}=-\theta_2\tan[\theta_2(L_2-a)].
\]

By virtue of \eqref{eeqq1} and \eqref{eeqq2}, it then follows that
\begin{equation}\label{eeqq3}
\frac{\theta_1}{\theta_2}\cdot\frac{e^{\theta_1 L_1}
-e^{-\theta_1 L_1}}{e^{\theta_1 L_1}+e^{-\theta_1 L_1}}
=\tan[\theta_2(a-L_1)]>0
\end{equation}
and
\begin{equation}\label{eeqq4}
\frac{\theta_1}{\theta_2}=\tan[\theta_2(L_2-a)]>0.
\end{equation}
In view of \eqref{pt2}, \eqref{eeqq3} and \eqref{eeqq4}, we may assume that
\begin{equation}\label{pt1}
0<\theta_2(L_2-a)<\frac{\pi}{2},\ \ 0<\theta_2(a-L_1)<\frac{\pi}{2}.
\end{equation}

Due to
\[\frac{e^{\theta_1 L_1}
-e^{-\theta_1 L_1}}{e^{\theta_1 L_1}+e^{-\theta_1 L_1}}<1,
\]
it is easily seen from \eqref{eeqq3} and \eqref{eeqq4} that $a-L_1<L_2-a$, which means that
\[
a\in\big(L_1,\frac{L_1+L_2}{2}\big).
\]
Consequently, $a\to L_1$ as $L_2\to L_1$. Using this fact and \eqref{eeqq4} again, we know
that
\begin{equation}\label{Lto1}
\tilde{\lambda}_1\to-g'(0)>0,\ \ \mbox{as}\ L_2\to L_1.
\end{equation}

Next we determine the limit of $\tilde{\lambda}_1$ in the case that $L_1$ is fixed and $L_2\to\infty$.
Suppose that there is $a^*\in(L_1,\infty)$ such that $a\to a^*$ as $L_2\to \infty$. It then follows from
\eqref{pt1} that $\theta_2\to 0$ and
$\tilde{\lambda}_1\to-f'(0)$ in this case, which contradicts with \eqref{eeqq3}. On the other hand, suppose
$a\to L_1$ as $L_2\to \infty$, we get from \eqref{eeqq3} and \eqref{thee} that
\[
\theta_1\to 0,\ \ \tilde{\lambda}_1\to-g'(0)\ \ \mbox{ and }\ \ \theta_2\to \sqrt{f'(0)-g'(0)}.
\]
A contradiction occurs due to \eqref{eeqq4}.

Consequently, we have proved that
\[
a\to \infty,\ \ \mbox{ as}\ L_2\to\infty.
\]
Combining this, \eqref{eeqq3}, \eqref{eeqq4} and \eqref{pt1}, as $L_2\to \infty$, one can easily see that
\begin{equation}\label{Lto2}
\tilde{\lambda}_1\to-f'(0)<0
\end{equation}
and
\[
\theta_2\to 0,\ \ \ \theta_2(a-L_1)\to\frac{\pi}{2}\ \ \mbox{ and }\ \ \theta_2(L_2-a)\to\frac{\pi}{2}.
\]

Furthermore, making use of \eqref{eeqq3}, \eqref{eeqq4} and \eqref{pt1} again, we deduce that
\[
\theta_2(a-L_1)=\arctan\Big[\frac{\theta_1}{\theta_2}\cdot\frac{e^{\theta_1 L_1}
-e^{-\theta_1 L_1}}{e^{\theta_1 L_1}+e^{-\theta_1 L_1}}\Big]\ \ \mbox{ and }\ \
\theta_2(L_2-a)=\arctan \frac{\theta_1}{\theta_2}.
\]
Adding these two identities infers
\begin{equation}\label{LL11}
L=L_2-L_1=\frac{1}{\theta_2}\Big\{ \arctan\Big[\frac{\theta_1}{\theta_2}\cdot\frac{e^{\theta_1 L_1}
-e^{-\theta_1 L_1}}{e^{\theta_1 L_1}+e^{-\theta_1 L_1}}\Big]+  \arctan \frac{\theta_1}{\theta_2}\Big\}.
\end{equation}

It is noted that $\theta_1$ is decreasing while $\theta_2$ is increasing with respect to $\tilde{\lambda}_1$.
By virtue of \eqref{LL11}, some basic analysis shows that $\tilde{\lambda}_1$ is decreasing with respect to $L>0$.
In addition, by \eqref{Lto1} and \eqref{Lto2}, we conclude that there is a unique $\tilde{L}_*>0$ such that $\tilde{\lambda}_1(L)<0$ if $L>\tilde{L}_*$, $\tilde{\lambda}_1(L)=0$ if $L=\tilde{L}_*$ and $\tilde{\lambda}_1(L)>0$ if $0<L<\tilde{L}_*$.

It remains to show that $\tilde{L}_*>L_*$. In fact, we obtain from \eqref{LL11} that
\begin{equation}\nonumber
L>\frac{1}{\theta_2}\arctan \frac{\theta_1}{\theta_2},\ \ \forall L>0.
\end{equation}
Thus, by the definition of $\tilde{L}_*,\,\theta_1$ and $\theta_2$, it holds
\begin{equation}\nonumber
\tilde{L}_*>\frac{1}{\sqrt{f'(0)}}\arctan\sqrt{-\frac{g'(0)}{f'(0)}}=L_*.
\end{equation}
The proof is now complete. \end{proof}

With the aid of Theorem \ref{thm:convergence} and Lemma \ref{lem:1eigenvaluemp2}, we can use the similar argument as in Section 3 to prove
Theorem \ref{thm:dybe1}. The details are omitted here.

\smallskip
Finally, we derive the estimate \eqref{L8-2} for $\tilde{L}^*$. That is, we have

\begin{prop}\label{propr1-1}
Let $\tilde{L}^*$ be given in \eqref{L8-1}, then \eqref{L8-2} holds.
\end{prop}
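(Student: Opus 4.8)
The plan is to run the contradiction argument of Proposition \ref{propr1}, but with the one-sided Neumann--Dirichlet sub-problem there replaced by a sub-problem posed on the protection interval $(L_1,L_2)$ carrying homogeneous Dirichlet data at \emph{both} endpoints. Write $L^0:=\int_0^{\theta^*}\frac{dr}{\sqrt{2\int_r^{\theta^*}f(s)\,ds}}$, which is finite by Proposition \ref{propr1}; thus \eqref{L8-2} amounts to $\tilde L^*\le 2L^0$. I would argue by contradiction, assuming $\tilde L^*>2L^0$. As recorded just before the statement (by the same reasoning as in Lemma \ref{lem:LL1} applied to \eqref{q-statio}), one may then fix $L_1>0$ and some $L\in(2L^0,\tilde L^*)$, set $L_2:=L_1+L$, and obtain a ground state $U$ of \eqref{q-statio} for these $L_1,L_2$. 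By definition $U>0$ on $[0,\infty)$, and by \eqref{Utot-a} we have $\|U\|_{L^\infty(0,\infty)}<\theta^*$.

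The core step is to compare $U$, on $[L_1,L_2]$, with the solution $v$ of
\begin{equation}\nonumber
\left\{
\begin{array}{ll}
v_t=v_{xx}+f(v), & t>0,\ x\in(L_1,L_2),\\
v(t,L_1)=v(t,L_2)=0, & t>0,\\
v(0,x)=v_0(x), & x\in[L_1,L_2],
\end{array}
\right.
\end{equation}
where I take $v_0$ symmetric about $m:=\tfrac{L_1+L_2}{2}$, not identically zero, and with $0\le v_0\le U$ on $[L_1,L_2]$ (a sufficiently small positive constant works, since $U$ is positive and continuous there). By uniqueness $v$ remains symmetric about $m$, so after the shift $x\mapsto x-m$ its restriction to $[m,L_2]$ solves precisely \eqref{subp-q1} with the parameter $L$ there replaced by $L/2$. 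Since $L/2>L^0$, the convergence \eqref{vtoq}, together with the fact established in the proof of Proposition \ref{propr1} that the associated steady state $q_{L/2}$ satisfies $q_{L/2}(0)>\theta^*$, gives
\[
v(t,m)\ \longrightarrow\ q_{L/2}(0)>\theta^*\qquad\text{as }t\to\infty .
\]

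To finish I would use the comparison principle. Choosing $u_0=U$ in \eqref{q} yields the stationary solution $u(t,\cdot)\equiv U$; on the bounded interval $(L_1,L_2)$ the equation is $w_t=w_{xx}+f(w)$ for both $U$ and $v$, with $U(L_1),U(L_2)>0=v(t,L_1)=v(t,L_2)$ and $U\ge v_0$ at $t=0$, so $U(x)\ge v(t,x)$ for all $t\ge0$, $x\in[L_1,L_2]$. Evaluating at $x=m$ and letting $t\to\infty$ gives $\|U\|_{L^\infty(0,\infty)}\ge U(m)\ge q_{L/2}(0)>\theta^*$, contradicting $\|U\|_{L^\infty(0,\infty)}<\theta^*$. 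Hence \eqref{q-statio} possesses no ground state once $L>2L^0$, so $\tilde L^*\le 2L^0$, which is \eqref{L8-2}; in particular $\tilde L^*<\infty$. I expect the only point needing care to be the reflection-symmetry reduction to \eqref{subp-q1}: once the protection interval of length $L>2L^0$ is cut at its midpoint, each half is exactly the one-sided problem of Proposition \ref{propr1} with parameter $L/2>L^0$, so both the convergence statement \eqref{vtoq} and the amplitude bound $q_{L/2}(0)>\theta^*$ come for free and need not be re-derived.
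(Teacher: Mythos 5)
Your argument is correct and follows essentially the same route as the paper: the paper's proof likewise appeals to the Dirichlet problem on $(L_1,L_2)$ (its auxiliary problem \eqref{prob-q2}), whose positive steady state for half-length $l>L^0$ is symmetric about the midpoint with value exceeding $\theta^*$ there, and then repeats the comparison argument of Proposition \ref{propr1}. Your reflection of the parabolic problem to the half-interval version \eqref{subp-q1} with parameter $L/2$, together with the direct contradiction against \eqref{Utot-a}, is just an explicit rendering of what the paper leaves as an ``obvious modification''.
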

\begin{proof} The analysis is similar to that of Proposition \ref{propr1}. Instead of \eqref{prob-q}, we here appeal to
the following auxiliary problem:
\begin{equation}\label{prob-q2}
\left\{
\begin{array}{ll}
q'' +f(q)=0,\ q(x)>0, \ \ \  x\in (L_1,L_2),\\
q(L_1)=q(L_2)=0.
\end{array}
\right.
\end{equation}
By the proof of Proposition \ref{propr1}, we know that when $l:=\frac{L_2-L_1}{2}=L^0$,
where $L^0$ is given in \eqref{q-eq}, problem \eqref{prob-q2} possesses a unique positive solution $q$, which is increasing on $[L_1,\frac{L_2+L_1}{2}]$
while is decreasing on $[\frac{L_2+L_1}{2},L_2]$ and satisfies $q(\frac{L_2+L_1}{2})=\theta^*,\, q'(\frac{L_2+L_1}{2})=0$.
In addition, when $l>L^0$, problem \eqref{prob-q2} also has a unique positive solution $q_{l}(x)$ fulfilling $q_l'(\frac{L_2+L_1}{2})=0,\,q_l(\frac{L_2+L_1}{2})>\theta^*$.

Now following the argument of Proposition \ref{propr1} with some obvious modification, we can conclude
that $\tilde{L}^*\leq 2L^0$.
\end{proof}

\section{Conclusion}

More than 99 percent of all species amounting to over five billion species, that ever lived on Earth are estimated to be extinct; most species that become extinct are never scientifically documented \cite{Ne}. Some scientists estimate that up to half of presently existing plant and animal species may become extinct by 2100 \cite{Wi}. Humans can cause extinction of a species through overharvesting, pollution, habitat destruction, introduction of invasive species (such as new predators and food competitors), overhunting, and other influences.

More and more environmental groups and governments are concerned with the extinction of species caused by humanity, and they are trying  to prevent further extinctions through a variety of conservation programs. It is recognized that protection zones provide many economic, social, environmental, and cultural values. The role of protection zone in preventing population from extinction has been investigated in \cite{CLMS,CSW,DuLiang,DPW,DuS2,DuS3,HZ,HZ2,LW,LWL,LY,Oe,WL,ZZL} and the references therein for reaction-diffusion models; we note that bounded habitats are assumed in those works.

In the present work, we have been concerned with a reaction-diffusion model with strong Allee effect, in which the endangered single species lives in an entire one-dimensional space; when the initial density of the species is small, it is known that the species will die out in the long run. In order to save such endangered species, we have introduced a bounded protection zone, within which the species growth is governed by the classical Fisher-KPP nonlinear reaction.

Assume that the total length of protection zone is fixed as $2L$, and the initial data are symmetric with respect to the origin. Regarding the design of the protection zone, we have indeed proposed the following two different scenarios (refer to Figure \ref{fig:a}):

\smallskip
\ \ \ (i)\ {\it the protection zone is $(-L,L)$ and thus is connected};

\ \ \,(ii)\ {\it the protection zone is $(-L_2,-L_1)\cup(L_1,L_2)$ with $0<L_1<L_2$ and thus is separate.}
(Recall that the solution of the problems under consideration is symmetric with respect to the origin).

\smallskip Our results (Theorems \ref{thm:dybe} and \ref{thm:dybe1}) have shown that
in each scenario, there are two critical values $0<\hat L_*\leq\hat L^*$, and proved that a vanishing-transition-spreading trichotomy result holds when the length $2L$ of protection zone is smaller than $2\hat L_*$ (that is, $0<L<\hat L_*$); a transition-spreading dichotomy result holds when $\hat L_*<L<\hat L^*$; only spreading happens when $L>\hat L^*$. As a consequence, our results suggest that the protection zone works only when its length $2L$ is larger than the critical value $2\hat L_*$. Here,
$(\hat L_*,\hat L^*)=(L_*,L^*)$ in scenario (i) (see Theorem \ref{thm:dybe}) and $(\hat L_*,\hat L^*)=(\tilde L_*,\tilde L^*)$ in scenario (ii) (see Theorem \ref{thm:dybe1}).

Furthermore, in light of Lemma \ref{lem:1eigenvaluemp2}, it holds
 $$L_*<\tilde L_*.$$
This, combined with Theorems \ref{thm:dybe} and \ref{thm:dybe1}, enables us to further conclude that the scenario that
the protection zone is designed into a connected region (Figure \ref{fig:a}: Left) is better for species spreading than the scenario that the protection zone is designed into a separate region consisting of two parts (Figure \ref{fig:a}: Right).
In this sense, we conjecture that a connected protection zone is the optimal one once the total length (regarded as resource) is given, even if the initial data may not be symmetric.

In the current paper, we have assumed that the species live in an entire one-dimensional space.
Nevertheless, the habitat of a biological population, in general, can be rather complicated. For example,
natural river systems are often in a spatial network structure such as dendritic
trees. The network topology (i.e., the topological structure of a river network) can greatly influence the species persistence and extinction. Therefore, as in \cite{JPS,Ra,SMA}, it would be interesting to consider a more general river habitat (bounded or unbounded) consisting of more than one branch. Moreover, if a branch is bounded, the works in \cite{LLL,LLL2,LL} have shown that different boundary conditions could be vital in the population dynamics. We plan to study these problems with Allee effect and protection zone in future work.

\begin{figure}[htbp]
\centering
\includegraphics[width=7cm]{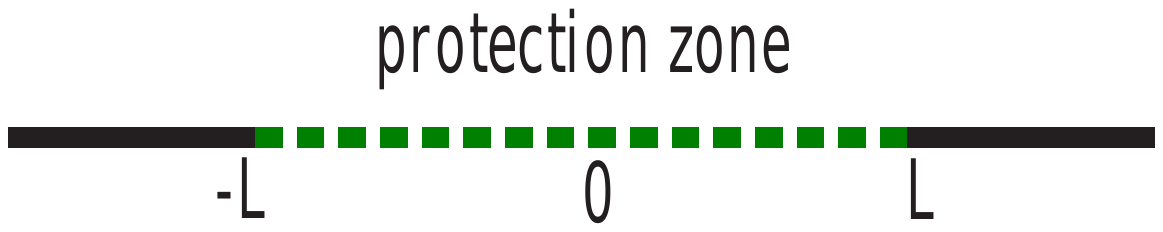}
\includegraphics[width=7cm]{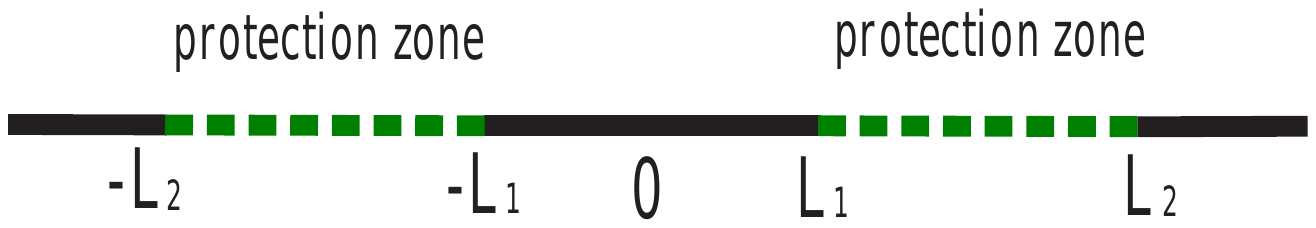}
\caption{{\small\ Left: The connected protection zone corresponding to system \eqref{p};
Right: The separate protection zone corresponding to system \eqref{q}.}}\label{fig:a}
\end{figure}

\section*{Acknowledgements}
The authors would like to thank Dr. Feng Zhou (Shandong Normal University, China) and Dr. Maolin Zhou (University of New England, Australia) for some valuable discussions during the preparation of the paper.

\end{document}